\documentclass[12pt]{amsart}

\usepackage[T1]{fontenc}
\usepackage{amsmath,amsthm,amscd}
\usepackage{amssymb}

\usepackage{newtxtext}
\usepackage{newtxmath}

\usepackage{microtype}
\usepackage{graphicx}
\usepackage{xcolor} 
\usepackage{bm,cancel,enumitem,chessboard}
\usepackage{tikz}
\usepackage[margin=1in]{geometry}

\usepackage[all,cmtip]{xy}
\usepackage{hyperref} 

\numberwithin{equation}{section}

\newtheorem{theorem}{Theorem}[section]
\newtheorem{proposition}[theorem]{Proposition}
\newtheorem{corollary}[theorem]{Corollary}
\newtheorem{lemma}[theorem]{Lemma}

\newtheorem{problem}[theorem]{Problem}

\newtheorem{remark}[theorem]{Remark}
\newtheorem{definition}[theorem]{Definition}

\theoremstyle{definition}
\newtheorem{defn}[theorem]{Definition}

\newcommand{\Hilb}{{\mathrm{Hilb}}}

\newcommand{\symm}{{\mathfrak{S}}}

\newcommand{\Sym}{{\mathrm{Sym}}}

\newcommand{\zero}{{\mathbf{0}}}

\newcommand{\sign}{{\mathrm{sign}}}

\newcommand{\DDD}{{\mathfrak{D}}}

\newcommand{\AAA}{{\mathcal{A}}}
\newcommand{\BBB}{{\mathcal{B}}}

\newcommand{\FF}{{\mathbb{F}}}
\newcommand{\CC}{{\mathbb{C}}}

\newcommand{\ZZ}{{\mathbb{Z}}}

\newcommand{\DD}{{\mathbb{D}}}

\newcommand{\xx}{{\mathbf{x}}}

\newcommand{\zz}{{\mathbf{z}}}

\newcommand{\ann}{{\mathrm{ann}}}

\newcommand{\Gale}{{\mathrm{Gale}}}

\begin{document}

\title[Superspace coinvariants and inverse systems for $GL_n(\FF_q)$]
{Superspace coinvariants and inverse systems for $GL_n(\FF_q)$}

\author[Brendon Rhoades]{Brendon Rhoades}
\address{University of California, San Diego}
\email{bprhoades@ucsd.edu}

\author[Andy Wilson]{Andy Wilson}
\address{Kennesaw State University}
\email{awils342@kennesaw.edu}

\begin{abstract}
    Let $q$ be a prime power and write $\Omega$ for the bigraded algebra of regular differential forms over $\FF_q^n$. The general linear group $GL_n(\FF_q)$ acts on $\Omega$; write $SI \subseteq \Omega$ for the ideal generated by $GL_n(\FF_q)$-invariants with vanishing constant term. The {\em $GL_n(\FF_q)$-superspace coinvariant ring} is the quotient $SR := \Omega/SI$. We calculate the bigraded Hilbert series of $SR$ and give an operator-theoretic characterization of the inverse system $SI^\perp$. Our results extend to subgroups $G$ of $GL_n(\FF_q)$ which contain $SL_n(\FF_q)$. 
\end{abstract}

\maketitle

\section{Introduction}
\label{sec:Introduction}

Let $\FF$ be a field and let $G \subseteq GL_n(\FF)$ be a finite  matrix group over $\FF$. Let $V := \FF^n$ be the defining representation of $G$. The action of $G$ on $V$ induces a graded action on the polynomial ring 
\begin{equation}
S := \Sym(V^*) \cong  \FF[x_1,\dots,x_n]
\end{equation}
of regular functions $V \to \FF$. We write $S^G : = \{ f \in S \,:\, g \cdot f = f \text{ for all } g \in G \}$ for the fixed subalgebra and $I_G := (S^G_+) \subseteq S$ for the ideal generated by $G$-invariants with vanishing constant term. The {\em $G$-coinvariant ring} is 
\begin{equation}
R_G := S/I_G.
\end{equation}
The quotient $R_G$ is a graded $G$-module.

Coinvariant theory is a fundamental topic in algebraic combinatorics whose most popular results occur in the setting where $\FF = \CC$ and $G$ is a complex reflection group. In this context, Chevalley proved \cite{Chevalley} that $R_G$ is isomorphic to the regular representation $\CC[G]$ as an ungraded $G$-module. When $G$ is a Weyl group, Borel proved \cite{Borel} that $R_G$ presents the cohomology of the associated flag variety and Steinberg gave \cite{SteinbergDifferential} a characterization of the inverse system attached to $R_G$ in terms of the $G$-Vandermonde polynomial. 

Let $q$ be a prime power and write $\FF_q$ for the finite field with $q$ elements. There is also a beautiful coinvariant theory associated to the finite general linear group $G = GL_n(\FF_q)$. Dickson gave \cite{Dickson} a list $D_{n,0}, D_{n,1}, \dots, D_{n,n-1} \in S^G$ of polynomial $G$-invariants whose definition will be recalled later. He proved \cite{Dickson} that the $D_{n,i}$ form an algebraically independent generating set of $S^G$. Dickson's original proof was quite complicated; easier proofs were later discovered \cite{SteinbergBasis, Wilkerson}. Mitchell proved \cite[Thm. 1.4]{Mitchell} that the $G$-coinvariant ring $R_G$ is {\em Brauer-isomorphic} to (has the same composition factors as) the regular representation $\FF_q[G]$. Reiner--Stanton--Webb proved \cite{RSW} an enhancement of this fact involving the action of a Singer cycle $c \in G$.

Returning to the setting of an arbitrary field $\FF$, we write $\Omega$ for the bigraded algebra of regular differential forms on $\FF^n$.  That is, we have
\begin{equation}
    \Omega := \Sym(V^*) \otimes \wedge (V^*) \cong \FF[x_1,\dots,x_n] \otimes \wedge \{ \theta_1,\dots,\theta_n\}.
\end{equation}
In physics, the ring $\Omega$ is called the {\em superspace ring} of rank $n$. If $G \subseteq GL_n(\FF)$ is a matrix group, the natural action of $G$ on $V$ induces a bigraded action of $G$ on $\Omega$. We write $\Omega^G \subseteq \Omega$ for the $G$-fixed subalgebra and $SI_G := (\Omega^G_+)\subseteq \Omega$ for the bigraded ideal generated by $G$-invariant elements with vanishing constant term. The {\em $G$-superspace coinvariant ring} is 
\begin{equation}
    SR_G := \Omega/SI_G.
\end{equation}
The quotient $SR_G$ is a bigraded $G$-module with respect to the commutative and exterior gradings.

When $\FF = \CC$, there has been significant activity devoted to extending classical results of coinvariant theory to superspace \cite{ACKMR, Bergeron, DIV, Lentfer, MRW, RWvan, RW, SS, SSComplex, SW, Zabrocki}. The Fields Institute Combinatorics Group\footnote{N. Bergeron, L. Colmenarejo, S. X. Li, J. Machacek, R. Sulzgruber, and M. Zabrocki} made a series of conjectures (see \cite{Zabrocki}) on the structure of $SR_{\symm_n}$. 
These conjectures were proven in a series of papers \cite{ACKMR, MRW, RW} using  hyperplane arrangement theory \cite{AHMMS, AMMN}. 
In forthcoming work \cite{BR}, these results will be extended  to the complex reflection groups $G(r,1,n)$.

Superspace coinvariant theory in the positive characteristic setting $\FF = \FF_q$ has been unexplored until this paper. We focus mainly on the case where $G = GL_n(\FF_q)$ is the full finite general linear group. In Section~\ref{sec:Hilbert} we introduce certain `modular higher Euler operators' $d_j^{(q)}$ which facilitate an understanding of $SR_G$ via its inverse system $SI_G^\perp$. These operators are Frobenius-analogs of the higher Euler operators used in the characteristic 0 setting of \cite{MRW, RW}. The operators $d_j^{(q)}$ are used to prove the following results.
\begin{itemize}
    \item We calculate the bigraded Hilbert series of the $GL_n(\FF_q)$-superspace coinvariant ring (Theorem~\ref{thm:hilbert}). In particular, we give a triangular structure on $SR_G$ indexed by subsets $J \subseteq [n]$ to show that
    \begin{equation}
    \label{eq:intro-hilbert}
        \Hilb(SR_{GL_n(\FF_q)};t,z) = \prod_{i=1}^{n-1} [q^n - q^i]_t \times \sum_{J \subseteq [n]} z^{|J|} \cdot \left[ q^n -1 - \sum_{j \in J} q^{n-j} \right]_t
    \end{equation}
    where $t$ tracks commutative degree and $z$ tracks exterior degree. Recall that the fundamental degrees of $G = GL_n(\FF_q)$ are $q^n-1, q^n-q, \dots, q^n-q^{n-1}$, so that the first factor in the right-hand side of \eqref{eq:intro-hilbert} is the product of $t$-numbers corresponding to the lower $n-1$ degrees. 
    \item We prove (Theorem~\ref{thm:operator}) that the inverse system $SI_G^\perp$ is generated by the operators $d_j^{(q)}$ and (commutative) partial differentiation starting with a certain distinguished element $\delta^{(q)}$ introduced in Proposition~\ref{prop:steinberg}.
\end{itemize}
We prove (Proposition~\ref{prop:steinberg}) a $GL_n(\FF_q)$-analog of Steinberg's characterization \cite{SteinbergDifferential} of the inverse system associated to $R_G$ when $G \subseteq GL_n(\CC)$ is a Weyl group. In particular, we show that the element $\delta^{(q)}$ defined in Proposition~\ref{prop:steinberg} generates the inverse system $I_G^\perp$ under the action of $S$ by partial differentiation. Our results extend nicely to subgroups $G \subseteq GL_n(\FF_q)$ which contain the special linear group $SL_n(\FF_q)$; see Theorems~\ref{thm:hilbert-between} and \ref{thm:operator-between}.

Our proofs make crucial use of two major tools in modular invariant theory.
The first is a result (see Theorem~\ref{thm:HS}) of Hartmann and Shepler \cite{HS} on the structure of the invariant ring $\Omega^G$ when $G$ is a group between $SL_n(\FF_q)$ and $GL_n(\FF_q)$. The structure of this ring is governed by a {\em twisted wedge product} $\curlywedge$ which will play an important role in our proofs. The second tool is a $GL_n(\FF_q)$-analog $s_\lambda^{(q)}$ of the Schur polynomials $s_\lambda$ introduced by Macdonald \cite{MacdonaldVariations}. This modular version of the Schur polynomial is defined using an $\FF_q$-analog of the bialternant formula.

The rest of the paper is organized as follows. In {\bf Section~\ref{sec:Background}} we give background material on commutative algebra, divided power rings, and (super)invariant theory. In {\bf Section~\ref{sec:Upper}} we prove a crucial syzygy (Lemma~\ref{lem:syzygy}) which allows us to bound the Hilbert series of $SR_{GL_n(\FF_q)}$ from above. {\bf Section~\ref{sec:Operator}} proves our operator-theoretic characterization of the commutative inverse system associated to $R_{GL_n(\FF_q)}$. In {\bf Section~\ref{sec:Hilbert}} we prove \eqref{eq:intro-hilbert} and give an operator-theoretic characterization of the inverse system for $SR_{GL_n(\FF_q)}$.  In {\bf Section~\ref{sec:Between}} we describe the adjustments necessary to generalize our work to subgroups $G \subseteq GL_n(\FF_q)$ which contain $SL_n(\FF_q)$. {\bf Section~\ref{sec:Conculsion}} presents two open problems.

\section{Background}
\label{sec:Background}

\subsection{Combinatorics} Given a nonnegative integer $n$, we use the notation
\begin{equation}
    [n]_t := 1 + t + t^2 + \cdots + t^{n-1} = \frac{1-t^n}{1-t}
\end{equation}
for the associated {\em $t$-number}. We also write $[n] := \{1,2,\dots,n\}$. A {\em partition of $n$} is a weakly decreasing sequence $\lambda= (\lambda_1 \geq \cdots \geq \lambda_k)$ of positive integers with $\lambda_ 1+ \cdots + \lambda_k = n$.  Trailing zeros are ignored when considering partitions, so that (for example) $(3,2,2)$ and $(3,2,2,0,0)$ represent the same partition of 7.

Given two positive integers $k \leq n$, the {\em Gale order} is the partial order $\leq_\Gale$ on $k$-element subsets of $[n]$ defined as follows. Suppose $I = \{i_1 < \cdots < i_k \}$ and $J = \{j_1 < \cdots < j_k \}$ are two $k$-element subsets of $[n]$. We have
\begin{equation}
    I \leq_\Gale J \quad \Leftrightarrow \quad i_r \leq j_r \text{ for all } r= 1,2,\dots,k.
\end{equation}

\subsection{Commutative algebra}  Let $\FF$ be an arbitrary field. If $V = \bigoplus_{i \geq 0} V_i$ is a graded $\FF$-vector space with each graded piece $V_i$ finite-dimensional, the {\em Hilbert series} of $V$ is the formal power series
\begin{equation}
    \Hilb(V;t) := \sum_{i \geq 0} \dim_\FF V_i \cdot t^i
\end{equation}
where $t$ is a formal variable. In this paper, we will consider bigraded vector spaces $V = \bigoplus_{i,j \geq 0} V_{i,j}$ where $i$ corresponds to a commutative grading and $j$ corresponds to an exterior grading. In this setting, the {\em bigraded Hilbert series} is
\begin{equation}
    \Hilb(V;t,z) := \sum_{i,j \geq 0} \dim_\FF V_{i,j} \cdot t^i z^j.
\end{equation}

We write $S = \FF[x_1,\dots,x_n]$ for the rank $n$ polynomial ring with its standard grading induced by $\deg(x_i) = 1$ for all $i$. A sequence $f_1,\dots,f_n \in S$ of nonconstant homogeneous polynomials is a {\em regular sequence} if for all $1 \leq i \leq n$, the map
\[
(-) \times f_i: S/(f_1,\dots,f_{i-1}) \longrightarrow S/(f_1,\dots,f_{i-1})
\]
on the quotient ring $S/(f_1,\dots,f_{i-1})$ is injective. An ideal generated by a regular sequence is called a {\em complete intersection}; quotients by complete intersections have nice Hilbert series. 

\begin{lemma}
    \label{lem:regular-hilbert} (see e.g. \cite[p. 35]{Stanley})
    Suppose $f_1,\dots,f_n \in S$ are homogeneous polynomials which form a regular sequence. Let $I = (f_1,\dots,f_n) \subseteq S$ be the ideal generated by these polynomials. We have
    \begin{equation}
        \Hilb(S/I;t) = \prod_{i=1}^n [\deg(f_i)]_t.
    \end{equation}
\end{lemma}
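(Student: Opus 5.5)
The plan is to induct on $n$, peeling off one element of the regular sequence at a time and using the short exact sequence furnished by injectivity of multiplication. For $0 \leq i \leq n$ set $A_i := S/(f_1,\dots,f_i)$, so $A_0 = S$ and $A_n = S/I$. Each $A_i$ is a graded $\FF$-vector space with finite-dimensional graded pieces, since it is a quotient of $S$ by a homogeneous ideal. The base case is the classical formula
\[
\Hilb(S;t) = \frac{1}{(1-t)^n},
\]
which comes from counting monomials of each degree.

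For the inductive step, write $d_i := \deg(f_i)$. Regularity says that multiplication by $f_i$ is injective on $A_{i-1}$, and it raises degree by $d_i$. So for each degree $k$ there is an exact sequence of finite-dimensional vector spaces
\[
0 \longrightarrow (A_{i-1})_{k-d_i} \xrightarrow{\ \times f_i\ } (A_{i-1})_k \longrightarrow (A_i)_k \longrightarrow 0.
\]
Exactness on the right holds because $A_i = A_{i-1}/f_i A_{i-1}$. Taking dimensions and summing over $k$ gives
\[
\Hilb(A_i;t) = (1-t^{d_i})\,\Hilb(A_{i-1};t).
\]
Iterating this from $i=1$ to $i=n$ yields
\[
\Hilb(S/I;t) = \frac{\prod_{i=1}^n (1-t^{d_i})}{(1-t)^n} = \prod_{i=1}^n [d_i]_t,
\]
using $[d]_t = (1-t^d)/(1-t)$ from the definition of $t$-numbers.

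There is no real obstacle here. The one point to check is that the degree shift is handled correctly, namely that $(A_{i-1})_{k-d_i}$ is taken to be zero when $k < d_i$. The identity should be understood in $\ZZ[[t]]$, where $1-t$ is invertible. A by-product worth noting is that the answer is a polynomial, so $S/I$ is finite-dimensional, which is how the lemma will be applied to the Dickson invariants.
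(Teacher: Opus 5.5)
Your argument is correct and is the standard proof. The paper gives no proof of its own and defers to Stanley, where the formula comes from exactly the exact sequences $0 \to A_{i-1}(-d_i) \xrightarrow{\times f_i} A_{i-1} \to A_i \to 0$ supplied by regularity; the paper uses the same device in its proof of Lemma~\ref{lem:colon-regular}.
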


The following criterion is often useful for proving that a sequence of polynomials is regular. It may be proven using the Nullstellensatz.

\begin{lemma}
    \label{lem:regular-locus}
    Let $f_1,\dots,f_n \in S$ be a sequence of $n$ homogeneous polynomials of positive degree. Let $\overline{\FF}$ be the algebraic closure of $\FF$. Then $f_1,\dots,f_n$ is a regular sequence in $S$ if and only if the polynomial system $f_1= \cdots = f_n = 0$ has only the trivial solution $\zero \in \overline{\FF}^n$.
\end{lemma}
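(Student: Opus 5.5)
The plan is to prove both directions through the standard dictionary between regular sequences, Krull dimension, and the zero locus in $\overline{\FF}^n$. I will also reduce to the algebraically closed case. Regularity of a sequence of homogeneous polynomials is preserved and reflected under the faithfully flat extension $S \to \overline{S} := \overline{\FF}[x_1,\dots,x_n]$. Concretely, $\overline{S}/(f_1,\dots,f_{i-1}) = (S/(f_1,\dots,f_{i-1})) \otimes_\FF \overline{\FF}$, and multiplication by $f_i$ is injective on a vector space over $\FF$ exactly when it is injective after tensoring with $\overline{\FF}$. So we may assume $\FF = \overline{\FF}$.

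The key intermediate claim is the following: for homogeneous $f_1,\dots,f_n$ of positive degree in $S$ (graded, so all $f_i$ lie in the irrelevant ideal $\mathfrak m = (x_1,\dots,x_n)$), the sequence is regular if and only if $\dim S/(f_1,\dots,f_n) = 0$. I would prove this using the graded Cohen--Macaulay property of $S$.
\begin{itemize}
\item For "$\Rightarrow$": each regular element drops the Krull dimension by exactly one, so after $n$ steps the dimension is $n - n = 0$.
\item For "$\Leftarrow$": in the graded ring $S$, which is Cohen--Macaulay, homogeneous elements of $\mathfrak m$ form a regular sequence if and only if they form part of a system of parameters. The condition $\dim S/(f_1,\dots,f_n) = 0 = n - n$ says exactly that $f_1,\dots,f_n$ is a homogeneous system of parameters.
\end{itemize}
Note that the order of the $f_i$ does not matter here, since in the graded setting a permutation of a homogeneous regular sequence is again regular.

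Next I will translate $\dim S/I = 0$ into the geometric condition, where $I = (f_1,\dots,f_n)$. By the Nullstellensatz, $\dim S/I = 0$ if and only if $V(I) \subseteq \overline{\FF}^n$ is a finite set. Since $I$ is homogeneous, $V(I)$ is a cone, stable under scaling by $\overline{\FF}^\times$, which is infinite. Hence $V(I)$ is finite if and only if $V(I) \subseteq \{\zero\}$. Moreover $\zero \in V(I)$ always, because each $f_i$ has positive degree and therefore no constant term. Combining these gives the statement: the system $f_1 = \cdots = f_n = 0$ has only the trivial solution exactly when the sequence is regular. Equivalently, $\sqrt{I} = \mathfrak m$, so $\mathfrak m^N \subseteq I$ for some $N$, and $S/I$ is finite-dimensional.

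The main obstacle is the step "system of parameters $\Rightarrow$ regular sequence", which rests on the Cohen--Macaulayness of $S$. To keep this self-contained, I would argue as follows. Since $S/I$ is finite-dimensional, $S$ is a finitely generated module over $A = \FF[f_1,\dots,f_n]$. By dimension count, $f_1,\dots,f_n$ are algebraically independent, so $A$ is itself a polynomial ring. Graded Hilbert-series comparison, or Auslander--Buchsbaum applied to the graded $A$-module $S$ of depth $n$, then shows that $S$ is free over $A$. Freeness over the polynomial ring $A$ immediately gives regularity of $f_1,\dots,f_n$ on $S$.
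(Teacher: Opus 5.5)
Your argument is correct and is the standard way to complete the paper's one-line justification; the paper gives no proof beyond saying the lemma follows from the Nullstellensatz. The Nullstellensatz turns the trivial-zero-locus condition into $\sqrt{(f_1,\dots,f_n)} = (x_1,\dots,x_n)$, i.e.\ into $f_1,\dots,f_n$ being a homogeneous system of parameters, and the Cohen--Macaulay property of $S$, which the paper's hint leaves implicit, turns that into regularity.

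In your self-contained last step, use the Auslander--Buchsbaum route (with $\mathrm{depth}_A S = n$ because $\sqrt{(f_1,\dots,f_n)S} = (x_1,\dots,x_n)$) rather than a bare Hilbert-series comparison. Comparing $\Hilb(S/(f_1,\dots,f_n);t)/\prod_i (1-t^{\deg f_i})$ with $1/(1-t)^n$ already presupposes $\Hilb(S/(f_1,\dots,f_n);t) = \prod_i [\deg f_i]_t$, which is equivalent to the freeness you are trying to establish.
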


Given an ideal $I \subseteq S$ and an element $f \in S$, the {\em colon ideal} is given by
\begin{equation}
(I:f) := \{ g \in S \,:\,fg \in I \}.
\end{equation}
It is not difficult to show that $(I:f)$ is an ideal of $S$ containing $I$ and that $(I:f)$ is homogeneous whenever both $f$ and $I$ are homogeneous. We need a factorization result on the generating sets of certain colon ideals.

\begin{lemma}
    \label{lem:colon-regular}
    Let $f_1,\dots,f_n \in S$ be a regular sequence of homogeneous polynomials of positive degree. Write $I = (f_1,\dots,f_n) \subseteq S$ for the ideal generated by these polynomials. Suppose we have a factorization $f_1 = f_1' \cdot f_1''$ where $f_1', f_1'' \in S$ are homogeneous. We have the equality
    \begin{equation}
        (I:f_1') = (f_1'', f_2,\dots,f_n)
    \end{equation}
    of ideals in $S$.
\end{lemma}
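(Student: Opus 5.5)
The inclusion $(f_1'', f_2,\dots,f_n) \subseteq (I:f_1')$ is immediate. We have $f_1' \cdot f_1'' = f_1 \in I$, and $f_1' \cdot f_i \in I$ for $i \geq 2$. So the work lies in the reverse inclusion. Set $J := (f_1'', f_2, \dots, f_n)$. We must show that if $g f_1' \in I$ then $g \in J$.

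The plan is to exploit the fact that regular sequences of homogeneous elements in the graded ring $S$ may be permuted. Permutability is standard, and also follows from Lemma~\ref{lem:regular-locus}, since the vanishing-locus condition is symmetric in the $f_i$. Hence $f_2,\dots,f_n,f_1$ is regular. Write $\bar S := S/(f_2,\dots,f_n)$; then $f_1 = f_1' f_1''$ is a nonzerodivisor on $\bar S$. A product is a nonzerodivisor only if each factor is, so both $f_1'$ and $f_1''$ are nonzerodivisors on $\bar S$. Also note that $f_1''$ together with $f_2,\dots,f_n$ is again regular. This is not strictly needed below, but it follows from Lemma~\ref{lem:regular-locus}, because the common zero locus of $f_1'', f_2, \dots, f_n$ is contained in that of $f_1, \dots, f_n$. (If $f_1'$ or $f_1''$ is constant, the statement is trivial, so assume both have positive degree.)

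Now take $g$ with $g f_1' \in I$, and pass to $\bar S$. The image $\bar I$ of $I$ is the principal ideal $(\bar f_1)$, so $\bar g \bar f_1' = \bar h \bar f_1' \bar f_1''$ for some $h \in S$. Thus $\bar f_1' (\bar g - \bar h \bar f_1'') = 0$ in $\bar S$. Since $\bar f_1'$ is a nonzerodivisor, we get $\bar g = \bar h \bar f_1''$. Lifting back, this says $g - h f_1'' \in (f_2,\dots,f_n)$, so $g \in J$, as desired.

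The only genuinely nontrivial step is the permutation of the regular sequence, i.e.\ knowing that $f_1$ is a nonzerodivisor modulo $(f_2,\dots,f_n)$. I would justify this via Lemma~\ref{lem:regular-locus}, as the criterion there is manifestly order-independent. Everything else is the elementary cancellation argument above.
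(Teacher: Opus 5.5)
Your proof is correct, and it takes a different route from the paper.

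The paper proves the reverse containment by counting. It uses the exact sequence $0 \to S/(I:f_1') \xrightarrow{\times f_1'} S/I \to S/(f_1',f_2,\dots,f_n) \to 0$ and applies Lemma~\ref{lem:regular-hilbert} to the three regular sequences $f_1,\dots,f_n$, $f_1',f_2,\dots,f_n$ and $f_1'',f_2,\dots,f_n$; the last two are regular by Lemma~\ref{lem:regular-locus}. The identity $[a+b]_t = [a]_t + t^{a}[b]_t$ then shows that $S/(I:f_1')$ and $S/(f_1'',f_2,\dots,f_n)$ have the same Hilbert series, so the easy containment must be an equality.

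You use Lemma~\ref{lem:regular-locus} only to reorder the sequence. Then $f_1$, and hence its factor $f_1'$, is a nonzerodivisor on $S/(f_2,\dots,f_n)$, and you cancel $f_1'$ directly.

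Your version is more elementary and more general. It needs no grading, no Hilbert series, and no regularity of $f_1',f_2,\dots,f_n$ or $f_1'',f_2,\dots,f_n$. It proves the identity in any commutative ring where $f_1$ is a nonzerodivisor modulo $(f_2,\dots,f_n)$, and it makes the case split on constant factors unnecessary. The paper's version mainly buys uniformity with the dimension-counting arguments used elsewhere in the paper. Both proofs ultimately rest on Lemma~\ref{lem:regular-locus} as an order-independent criterion for regularity.
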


Lemma~\ref{lem:colon-regular} and its proof should be compared to  \cite[Lem. 2.8]{HHMPT}.

\begin{proof}
    If $\deg(f_1') = 0$ then $f_1' \in \FF^\times$ is a nonzero scalar and $(I:f_1') = (f_1'', f_2,\dots,f_n) = I$. Also, if $\deg(f_1'') = 0$ then $f_1'' \in \FF^\times$ is a nonzero scalar and $(I:f_1') = (f_1'', f_2,\dots,f_n)= S$. We  therefore assume that both $f_1'$ and $f_1''$ have strictly positive degree. 

    Since $f_1 = f_1' \cdot f_1''$, by the definition of $(I:f_1')$ we have the containment of ideals
    \begin{equation}
        \label{eq:prime-ideal-containment}
        (f_1'',f_2,\dots,f_n) \subseteq (I:f_1').
    \end{equation}
    Also be the definition of $(I:f_1')$ we have a short exact sequence
    \begin{equation}
        \label{eq:colon-ses}
        0 \to S/(I:f_1') \xrightarrow{\,\, \times f_1' \, \, } S/I \xrightarrow{\,\, \text{can.} \, \, } S/(I + (f_1')) \to 0
    \end{equation}
    where the first map is induced by multiplication by $f_1'$ and the second map is the canonical projection. Noting that $I + (f_1') = (f_1',f_2,\dots,f_n)$, the short exact sequence \eqref{eq:colon-ses} yields the Hilbert series equality
    \begin{equation}
        \label{eq:hilbert-from-ses}
        \Hilb(S/I;t) = \Hilb(S/(f_1',f_2,\dots,f_n);t) + t^{\deg(f_1')} \cdot \Hilb(S/(I:f_1');t).
    \end{equation}
    Since $f_1'$ and $f_1''$ are nonconstant, Lemma~\ref{lem:regular-locus} implies that both $f_1',f_2,\dots,f_n$ and $f_1'',f_2,\dots,f_n$ are regular sequences. Since $I$ is generated by the regular sequence $f_1,\dots,f_n$, by Lemma~\ref{lem:regular-hilbert} we have the Hilbert series
    \begin{equation}
        \label{eq:colon-hilberts}
        \begin{cases}
            \Hilb(S/I;t) = [\deg(f_1)]_t \cdot [\deg(f_2)]_t \cdots [\deg(f_n)]_t, \\
            \Hilb(S/(f_1',f_2,\dots,f_n);t) = [\deg(f_1')]_t \cdot [\deg(f_2)]_t \cdots [\deg(f_n)]_t, \\
            \Hilb(S/(f_1'',f_2,\dots,f_n);t) = [\deg(f_1'')]_t \cdot [\deg(f_2)]_t \cdots [\deg(f_n)]_t.
        \end{cases}
    \end{equation}
    Since $\deg(f_1) = \deg(f_1') + \deg(f_1'')$ we have $[\deg(f_1)]_t = [\deg(f_1')]_t + t^{\deg(f_1')}[\deg(f_1'')]_t$ so that \eqref{eq:hilbert-from-ses} and \eqref{eq:colon-hilberts} force
    \begin{equation}
        \Hilb(S/(I:f_1');t) =  \Hilb(S/(f_1'',f_2,\dots,f_n);t).
    \end{equation}
    In turn, the containment of ideals \eqref{eq:prime-ideal-containment} forces the desired equality of ideals $(I:f_1') = (f_1'',f_2,\dots,f_n)$ and the proof is complete.
\end{proof}

\subsection{Divided power rings and inverse systems}
Let $I \subseteq S$ be a homogeneous ideal. The {\em inverse system} $I^\perp$ associated to $I$ is a useful tool for understanding the quotient ring $S/I$ without the direct use of cosets. In order to define inverse systems over fields $\FF$ of potentially-nonzero characteristic, we need the terminology of {\em divided power rings}. We refer the reader to e.g. \cite{RR} for a more leisurely overview.

The rank $n$ {\em divided power ring} $\DD$ over $\FF$ is the free $\FF$-module generated by the symbols $y_1^{(a_1)} \cdots y_n^{(a_n)}$ for $a_1,\dots,a_n \geq 0$. We have a vector space direct sum decomposition $\DD = \bigoplus_{i \geq 0} \DD_i$ where 
\[
\DD_i := \FF \cdot \left\{ y_1^{(a_1)} \cdots y_n^{(a_n)} \,:\, a_1 + \cdots + a_n  = i \right\}.
\]
We define a multiplication on $\DD$ by the $\FF$-bilinear extension of
\begin{equation}
    (y_1^{(a_1)} \cdots y_n^{(a_n)}) 
    \cdot (y_1^{(b_1)} \cdots y_n^{(b_n)}) 
    := {a_1 + b_1 \choose a_1, b_1} \cdots {a_n + b_n \choose a_n, b_n} 
    \cdot y_1^{(a_1 + b_1)} \cdots y_n^{(a_n + b_n)}
\end{equation}
where the binomial coefficients ${a_i + b_i \choose a_i, b_i}$ are regarded as elements of $\FF$ via
\[
{a_i + b_i \choose a_i, b_i} := \overbrace{1 + \cdots + 1}^{\frac{(a_i+b_i)!}{a_i! \cdot b_i!}} \in \FF.
\]
This endows $\DD = \bigoplus_{i \geq 0} \DD_i$ with the structure of a graded $\FF$-algebra. Roughly speaking, the element $y_1^{(a_1)} \cdots y_n^{(a_n)} \in \DD$ should be thought of as ``$\frac{y_1^{a_1} \cdots y_n^{a_n}}{a_1! \cdots a_n!}$", even in the positive characteristic setting where the product $a_1! \cdots a_n!$ may vanish in $\FF$.

There is an action $\odot: S \otimes \DD \to \DD$ of $S$ on $\DD$ given by 
\begin{equation}
    (x_1^{b_1} \cdots x_n^{b_n}) \odot (y_1^{(a_1)} \cdots y_n^{(a_n)}) := y_1^{(a_1 - b_1)} \cdots y_n^{(a_n - b_n)}
\end{equation}
and $\FF$-bilinear extension. Here we interpret $y_1^{(a_1 - b_1)} \cdots y_n^{(a_n - b_n)} := 0$ if $a_i - b_i < 0$ for some $i$. In particular, for any fixed degree $i$ we have a bilinear map
\begin{equation}
    \langle-,-\rangle: S_i \times \DD_i \longrightarrow \FF \quad \quad \langle f,g\rangle := f \odot g.
\end{equation}
It is not difficult to check that the map $\langle -, - \rangle$ is a perfect pairing between the finite-dimensional $\FF$-vector spaces $S_i$ and $\DD_i$. That is, for all nonzero $f \in S_i$ there exists $g \in \DD_i$ so that $\langle f,g\rangle \neq 0$.

Let $I \subseteq S$ be a homogeneous ideal. The {\em inverse system} (or {\em harmonic space}) associated to $I$ is the vector subspace $I^\perp \subseteq \DD$ given by
\begin{equation}
    I^\perp := \{ g \in \DD \,:\, f \odot g = 0 \text{ for all } f \in I \}.
\end{equation}
It can be shown that $I^\perp$ is a graded subspace of $\DD$ which is closed under the $\odot$-action of $S$. Furthermore, since $\langle-,-\rangle$ is a perfect pairing one has
\begin{equation}
    \Hilb(S/I;t) = \Hilb(I^\perp;t).
\end{equation}

\subsection{Superspace conventions} As in the introduction, we write $\Omega$ for the rank $n$ {\em superspace ring} of regular differential forms on $V := \FF^n$. We have 
\begin{equation}
    \Omega = \Sym(V^*) \otimes \wedge(V^*) \cong \FF[x_1,\dots,x_n] \otimes \wedge \{ \theta_1,\dots,\theta_n\}.
\end{equation}
We introduce the notation
\begin{equation}
    E := \wedge(V^*) \cong \wedge \{ \theta_1,\dots,\theta_n\}
\end{equation}
for the rank $n$ free exterior algebra so that
\begin{equation}
    \Omega = S \otimes E.
\end{equation}
The $\FF$-algebra $\Omega$ is bigraded via $\Omega_{i,j} := S_i \otimes E_j$.

For a subset $J = \{ j_1 < \cdots < j_k \} \subseteq [n]$, the {\em exterior monomial} is the increasing product
\begin{equation}
    \theta_J := \theta_{j_1} \wedge \cdots \wedge \theta_{j_k}
\end{equation}
of the corresponding exterior variables.  We have a direct sum decomposition
\begin{equation}
    \Omega = \bigoplus_{J \subseteq [n]} S \cdot \theta_J.
\end{equation}
The Gale order $\leq_\Gale$ induces a partial order on the summands of this decomposition with the same exterior degree.

Let $\partial_i: S \to S$ be the operator of partial differentiation with respect to $x_i$. We extend $\partial_i$ to a superspace operator $\partial_i: \Omega \to \Omega$ by identifying $\Omega = S \otimes E$ and acting on $S \otimes E$ by $\partial_i \otimes \mathrm{id}_E$. The {\em Euler derivation} is the map
\begin{equation}
    d: \Omega \longrightarrow \Omega
\end{equation}
given by the formula
\begin{equation}
    d(f) := \sum_{i=1}^n \partial_i f \wedge \theta_i.
\end{equation}

\subsection{Modular (super)invariant theory} In this subsection we specialize to the finite field $\FF = \FF_q$ where $q$ is a prime power. Let $\alpha = (\alpha_1,\dots,\alpha_n)$ be a list of $n$ nonnegative integers. The {\em Moore matrix} associated to $\alpha$ is the $S$-matrix with entries
\begin{equation}
    M_\alpha := \begin{pmatrix}
        x_1^{q^{\alpha_1}} & \cdots  & x_1^{q^{\alpha_n}} \\
        \vdots & \ddots & \vdots  \\
        x_n^{q^{\alpha_1}} & \cdots & x_n^{q^{\alpha_n}}
    \end{pmatrix}.
\end{equation}
The determinant $\det M_\alpha$ is a polynomial in $S$. 

L. E. Dickson introduced \cite{Dickson} the elements $D_{n,i}$ for $i = 0,1,\dots,n-1$ given by
\begin{equation}
    D_{n,i} := \frac{\det M_{(n,n-1,\dots,i+1,i-1,\dots,1,0)}}{\det M_{\rho}}
\end{equation}
where 
\begin{equation}
    \rho := (n-1,\dots,1,0)
\end{equation}
is the `staircase' sequence.
Dickson proved that $D_{n,i}$ is a polynomial of degree $q^n - q^i$ in $S$ (rather than merely a rational expression) which is invariant under the action of $GL_n(\FF_q)$. He proved further that the $GL_n(\FF_q)$-invariant subring of $S$ is freely generated by $D_{n,0}, D_{n,1},\dots,D_{n,n-1}$. Introducing a new variable $t$, the Dickson invariants are characterized by the equation
\begin{equation}
    \label{eq:dickson-defining}
    \sum_{a_1,\dots,a_n \in \FF_q} (t + a_1 x_1 + \cdots + a_n x_n) = t^{q^n} + \sum_{i=0}^{n-1} D_{n,i} \cdot t^{q^i}.
\end{equation}

Suppose $G$ is a subgroup of $GL_n(\FF_q)$ which contains the special linear group $SL_n(\FF_q)$. The group $G$ is determined by the order
\[
e = | \{ \det(g) \,:\, g \in G \} |
\]
of the image of the determinant homomorphism $\det: G \to \FF^\times$. The following generalization of Dickson's Theorem is well-known.

\begin{theorem}
    \label{thm:between-polynomial-generators}
    Let $G$ be a subgroup of $GL_n(\FF_q)$ which contains $SL_n(\FF_q)$. The $\FF_q$-algebra $S^G$ of polynomial $G$-invariants is freely generated by 
    \[
    (\det M_\rho)^e, \quad D_{n,1}, \quad D_{n,2}, \quad \dots \quad ,\quad D_{n,n-1}
    \]
    where $\rho  = (n-1,\dots,1,0)$ and $e = |\{\det(g)\,:\, g \in G\}|$.
\end{theorem}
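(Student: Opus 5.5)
Write $L := \det M_\rho$, a polynomial of degree $1 + q + \cdots + q^{n-1}$. The plan is the standard degree-counting argument: exhibit the $n$ listed elements as $G$-invariants, show that they are algebraically independent, and check that the product of their degrees equals $|G|$. Then a classical criterion (as in Wilkerson's treatment of Dickson's theorem) shows they generate $S^G$.

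\emph{Invariance.} For $g \in GL_n(\FF_q)$ the columns of $M_\rho$ transform linearly, since Frobenius $f \mapsto f^{q^k}$ is additive and fixes the entries of $g$. Hence $g \cdot L = \det(g) \cdot L$. So $L^e$ is $G$-invariant, because $\det(g)^e = 1$ for all $g \in G$: the image $\det(G)$ is the cyclic subgroup of order $e$ in $\FF_q^\times$. The Dickson invariants $D_{n,i}$ are invariant under all of $GL_n(\FF_q)$.

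\emph{Algebraic independence and regularity.} Dickson's defining identity \eqref{eq:dickson-defining}, evaluated at $t=0$, shows that $D_{n,0}$ equals, up to sign, the product of all nonzero linear forms. Also $L$ is, up to a scalar, the product of one representative linear form from each line, so $L^{q-1} = \pm D_{n,0}$. Consequently the common zero locus over $\overline{\FF}_q$ of $L^e, D_{n,1}, \dots, D_{n,n-1}$ equals that of $D_{n,0}, \dots, D_{n,n-1}$. The latter is just $\zero$: by \eqref{eq:dickson-defining}, if every $D_{n,i}$ vanishes at a point $v$, then $\prod_a (t + a\cdot v) = t^{q^n}$, which forces $a \cdot v = 0$ for all $a \in \FF_q^n$ and hence $v = \zero$. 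By Lemma~\ref{lem:regular-locus} the elements form a regular sequence, hence a homogeneous system of parameters, and are algebraically independent.

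\emph{Degree count and generation.} The degrees multiply to
\[
e\,(1+q+\cdots+q^{n-1}) \prod_{i=1}^{n-1}(q^n-q^i) = e \cdot \frac{q^n-1}{q-1} \prod_{i=1}^{n-1}(q^n - q^i) = e\,|SL_n(\FF_q)| = |G|,
\]
using $|G| = e\,|SL_n(\FF_q)|$. This is the main point where care is needed. We then invoke the criterion: if $f_1, \dots, f_n \in S^G$ form a homogeneous system of parameters with $\prod \deg f_i = |G|$, then $S^G = \FF_q[f_1, \dots, f_n]$. To justify it, set $A = \FF_q[f_1,\dots,f_n]$. Then $S$ is a free $A$-module of rank $\prod \deg f_i$, since the sequence is regular and $S$ is Cohen--Macaulay. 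By Galois theory, $[\mathrm{Frac}(S) : \mathrm{Frac}(S)^G] = |G|$, so $\mathrm{Frac}(A) = \mathrm{Frac}(S)^G = \mathrm{Frac}(S^G)$. Finally, $A$ is a polynomial ring and hence integrally closed, and $S^G$ is integral over $A$ (because $S$ is), so $S^G \subseteq A$. This gives equality and completes the plan.
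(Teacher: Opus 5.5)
Your proof is correct. The paper gives no proof of this statement: it quotes it as a well-known generalization of Dickson's theorem and uses it as an input. For example, the identification $((\det M_\rho)^e, D_{n,1},\dots,D_{n,n-1}) = I_G$ in Lemma~\ref{lem:modular-steinberg-between} relies on it.

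Your route is the standard degree criterion. You check invariance, certify a regular sequence via Lemma~\ref{lem:regular-locus}, and verify $\prod \deg = |G|$. Then $[\mathrm{Frac}(S):\mathrm{Frac}(A)] = \mathrm{rank}_A S = |G| = [\mathrm{Frac}(S):\mathrm{Frac}(S)^G]$, and integral closedness of $A$ finishes. This argument is self-contained and uniform in $e$. The case $e=q-1$ recovers Dickson's theorem itself, since $L^{q-1} \doteq D_{n,0}$, and $e=1$ gives the $SL_n(\FF_q)$ case. So you assume nothing beyond the defining identity for the $D_{n,i}$.

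The other common route takes Dickson's theorem for $SL_n(\FF_q)$ as known, namely $S^{SL_n(\FF_q)} = \FF_q[L, D_{n,1},\dots,D_{n,n-1}]$. It then takes invariants of the cyclic quotient $G/SL_n(\FF_q) \cong \det(G)$. An element $g_0$ whose determinant generates $\det(G)$ scales $L$ by an element of order exactly $e$ and fixes the $D_{n,i}$. By algebraic independence, an invariant expanded in powers of $L$ involves only exponents divisible by $e$. That route is shorter, but it is not self-contained.

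There are two small points to tidy.
\begin{enumerate}
\item Evaluating \eqref{eq:dickson-defining} at $t=0$ only gives $0=0$, because the factor with $a=\zero$ is $t$ itself and $t^{q^0}=t$. What you want is the coefficient of $t$ (divide by $t$, then set $t=0$). This gives $D_{n,0} = \prod_{a \neq \zero}(a_1x_1+\cdots+a_nx_n)$. You were right to read the $\sum$ in \eqref{eq:dickson-defining} as the intended $\prod$.
\item The factorization of $L$ as one linear form per line is asserted without justification. It takes one line: for $\ell = a_1x_1+\cdots+a_nx_n \neq 0$ one has $\sum_i a_i x_i^{q^k} = \ell^{q^k}$, so a nontrivial combination of the rows of $M_\rho$ vanishes modulo $\ell$, hence $\ell \mid L$. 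There are $(q^n-1)/(q-1) = \deg L$ pairwise non-associate such forms, and $L \neq 0$. For your zero-locus step, you only need the consequence $L^{q-1} \doteq D_{n,0}$.
\end{enumerate}
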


Using $\doteq$ for equality up to a nonzero scalar, it can be shown that
\begin{equation}
    D_{n,0} \doteq (\det M_\rho)^{q-1}
\end{equation}
so that Theorem~\ref{thm:between-polynomial-generators} generalizes Dickson's result.

The superspace invariant theory for matrix groups $G$ containing $SL_n(\FF_q)$ (among other subgroups of $GL_n(\FF_q)$) was worked out by Hartmann and Shepler \cite{HS}.  In particular, suppose $SL_n(\FF_q) \subseteq G$ with $e = |\{\det(g)\,:\,g \in G\}|$. The {\em twisted wedge product} $\curlywedge$ associated to $G$ is given by
\begin{equation}
    f \curlywedge g := \frac{f \wedge g}{(\det M_\rho)^e}
\end{equation}
for $G$-invariant superspace elements $f, g\in \Omega^G$. Hartmann and Shepler proved \cite{HS} that $\Omega^G$ is closed under the bilinear multiplication $\curlywedge$. 

Twisted wedge products were introduced in the characteristic zero setting by Shepler \cite{Shepler} in her study of differential semi-invariants for complex reflection groups. In the positive characteristic setting, Hartmann and Shepler \cite{HS} proved the following result. Recall that $d: \Omega \to \Omega$ is the Euler derivation.

\begin{theorem}
    \label{thm:HS}
    {\em (Hartmann--Shepler \cite{HS})} Let $G$ be a subgroup of $GL_n(\FF_q)$ which contains $SL_n(\FF_q)$ and let $e = |\{\det(g) \,:\, g \in G \}|$. The $G$-invariant subring $\Omega^G \subseteq \Omega$ is freely generated as a $S^G$-module by the $n$ elements
    \[
    \frac{d(D_{n,i})}{(\det M_\rho)^{q-1-e}} \quad \quad 0 \leq i \leq n-1
    \]
    with respect to the twisted wedge product $\curlywedge$ associated to $G$.
\end{theorem}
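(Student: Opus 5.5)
The plan has three parts. First, check that each $\omega_i := d(D_{n,i})/(\det M_\rho)^{q-1-e}$ is a genuine $G$-invariant element of $\Omega$. Second, show that the twisted wedge monomials $\omega_I := \omega_{i_1} \curlywedge \cdots \curlywedge \omega_{i_k}$, for $I = \{i_1 < \cdots < i_k\} \subseteq \{0,\dots,n-1\}$, are $S^G$-linearly independent. Third, show they span $\Omega^G$ over $S^G$, by a Solomon-type Cramer's rule argument. Throughout write $L := \det M_\rho$. We use two facts. $L$ is, up to a scalar, the product of one representative nonzero linear form from each line of $V^*$. And $L$ is a semi-invariant, $g \cdot L = \det(g)^{\pm 1} L$, so that $L^e$ is $G$-invariant and $L^{q-1} \doteq D_{n,0}$.

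\textbf{Step 1 (integrality and invariance).} Since $d$ is $GL_n(\FF_q)$-equivariant, $d(D_{n,i})$ is $G$-invariant. For the divisibility $L^{q-1-e} \mid d(D_{n,i})$, it suffices, since the linear factors of $L$ are pairwise coprime, to show that each $\theta$-coefficient of $d(D_{n,i})$ is divisible by $\ell^{q-2}$ for every linear factor $\ell$ of $L$. Changing coordinates so that $\ell = x_1$, we use invariance under the diagonal reflection $s = \mathrm{diag}(\zeta,1,\dots,1)$ with $\zeta$ a generator of $\FF_q^\times$. Then $D_{n,i}$ is a polynomial in $x_1^{q-1}, x_2, \dots, x_n$. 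Hence $\partial_1 D_{n,i}$ is divisible by $x_1^{q-2}$, and $\partial_j D_{n,i}$ for $j \ge 2$ is a polynomial in $x_1^{q-1}$. This gives the $\theta_1$-coefficient immediately. The other coefficients need an extra argument, such as a Jacobian degree count, or using that $e \mid q-1$ and the exact exponent required. I expect to have to sort out this subtlety. Once the quotient is a polynomial form, it is a semi-invariant with character $\det^{\pm(q-1-e)} = \det^{\mp e}$, which is trivial on $G$. So $\omega_i \in \Omega^G$, and by Hartmann--Shepler closure so is every $\omega_I$.

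\textbf{Step 2 (top degree and independence).} Next compute $\omega_{\{0,\dots,n-1\}} = J \cdot \theta_{[n]}/L^{(n-1)e + n(q-1-e)}$ up to a scalar, where $J$ is the Jacobian determinant of the Dickson invariants. By a degree count together with the divisibility of Step 1, this should be $\doteq L^{a}\theta_{[n]}$ for the smallest exponent $a$ making $L^a\theta_{[n]}$ $G$-invariant. In particular it is nonzero. Independence then follows in the usual way. Suppose $\sum_I f_I \omega_I = 0$ with $f_I \in S^G$, and fix $I$. Taking the twisted wedge with $\omega_{I^c}$ kills every term except the one indexed by $I$, and leaves $\pm f_I$ times the nonzero top form. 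Hence $f_I = 0$.

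\textbf{Step 3 (spanning; the main obstacle).} First show that every $G$-invariant form $h\,\theta_{[n]}$ is an $S^G$-multiple of $L^a\theta_{[n]}$. Invariance of $h\theta_{[n]}$ means $h$ is a $\det^{\pm1}$-semi-invariant. Any such semi-invariant vanishes on every reflecting hyperplane to the appropriate order, so $L^a \mid h$ and the quotient is invariant. Now let $\omega \in \Omega^G$ of exterior degree $k$. For each $|I| = k$, set $\omega \curlywedge \omega_{I^c} = h_I \cdot \omega_{\{0,\dots,n-1\}}$ with $h_I \in S^G$. The candidate $\omega' := \omega - \sum_I \pm h_I\,\omega_I$ then satisfies $\omega' \curlywedge \omega_{I^c} = 0$ for all $I$. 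It remains to deduce $\omega' = 0$. Over the fraction field, the $\omega_i$ span the degree-one forms, because their top wedge is nonzero. Hence the $\omega_I$ form a basis of the degree-$k$ forms over the fraction field, and the pairing given by wedging with complementary monomials is nondegenerate there. Thus $\omega' = 0$ and $\omega = \sum \pm h_I \omega_I$ with $h_I \in S^G$. The delicate points are the exact exponent bookkeeping in Steps 1 and 2 and the divisibility statement for semi-invariants in Step 3. The latter is where the modular setting differs from Solomon's characteristic-zero argument, since Molien-series counting is unavailable.
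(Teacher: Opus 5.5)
The paper gives no proof of this statement; it cites Hartmann--Shepler, including the divisibility on p.~11 of \cite{HS}. So your Solomon-style outline has to stand on its own, and as written it has two genuine gaps, both fillable.

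\textbf{First gap: the divisibility in Step~1, which you leave open.} Invariance under $\mathrm{diag}(\zeta,1,\dots,1)$ only shows that $\partial_j D_{n,i}$ ($j \geq 2$) lies in $\FF_q[x_1^{q-1},x_2,\dots,x_n]$. As soon as $q-1-e \geq 1$ you need these coefficients to vanish on $x_1 = 0$. Neither a Jacobian degree count (which concerns a determinant, not individual coefficients) nor $e \mid q-1$ can give that.

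The missing idea is that Dickson invariants become $q$-th powers on a hyperplane. Setting $x_1 = 0$ kills $Q_1$ in Lemma~\ref{lem:simple-PQ}(5), so $P_1(t)|_{x_1=0} = P_2(t)^q$. That is, $D_{n,i}|_{x_1=0} = D_{n-1,i-1}(x_2,\dots,x_n)^q$ for $i \geq 1$, and $D_{n,0}|_{x_1=0} = 0$. Hence $\partial_j D_{n,i}$ vanishes on $x_1=0$ for $j\geq 2$, and since it is a polynomial in $x_1^{q-1}$, it is divisible by $x_1^{q-1}$. Equivalently, the transvections fixing $\ker x_1$ pointwise force the $\theta_1$-free part of any invariant form of positive exterior degree to vanish modulo $x_1$; the single diagonal reflection you use cannot detect this.

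This sharper local shape ($x_1^{q-2}$ on the $\theta_1$-coefficient, $x_1^{q-1}$ on all others) also shows directly that $L^{k(q-1)-1}$ divides $\bigwedge_{i \in I} d(D_{n,i})$ for $|I| = k \geq 1$. So $\omega_I \in \Omega$ follows without appealing to closure.

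\textbf{Second gap: ``in particular it is nonzero'' in Step~2 does not follow.} Integrality, the semi-invariant divisibility, and the degree count only give $\omega_{\{0,\dots,n-1\}} = c\,L^{e-1}\theta_{[n]}$ for some $c \in \FF_q$, and $c = 0$ is not excluded. In characteristic $p$, algebraically independent polynomials can have identically vanishing Jacobian (e.g.\ $x_1^p, x_2,\dots,x_n$). Your independence argument, the nondegeneracy of the complementary pairing over the fraction field, and the conclusion $h_I \in S^G$ in Step~3 all depend on $c \neq 0$. So you must prove $\det(\partial_j D_{n,i}) \neq 0$. Two ways to do it:
\begin{enumerate}
\item The expansion \eqref{eq:syzygy-four} is triangular with diagonal entries $\pm 1$ in the forms $\bigl(\prod_{k<r}Q_k^{q-1}\bigr) Q_r^{q-2}\, dQ_r$. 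Each $dQ_r$ involves only $\theta_r,\dots,\theta_n$, with $\partial_r Q_r \doteq \prod_{k>r} Q_k^{q-1}$. Together these give $\bigwedge_i d(D_{n,i}) \doteq L^{n(q-1)-1}\theta_{[n]}$.
\item $\FF_q(x_1,\dots,x_n)$ is Galois, hence separable, over $\FF_q(D_{n,0},\dots,D_{n,n-1})$, so the $d(D_{n,i})$ span the K\"ahler differentials.
\end{enumerate}

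\textbf{Smaller points.} Closure of $\Omega^G$ under $\curlywedge$ is only meaningful in positive exterior degree (e.g.\ $1 \curlywedge \omega = \omega/L^e$), so the cases $k = 0$ and $k = n$ of Step~3 must be handled directly. In the independence argument you should first separate exterior degrees, because $\curlywedge$-ing with $\omega_{I^c}$ does not kill the terms $\omega_J$ with $|J| < |I|$. Your semi-invariant divisibility in Step~3, by contrast, is fine: a diagonal reflection of order $e$ in $G$ already forces $x_1^{e-1} \mid h$. With these repairs, and with closure cited from \cite{HS} as the paper does, your outline goes through.
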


Implicit in Theorem~\ref{thm:HS} is the statement that $(\det M_\rho)^{q-1-e}$ divides $d(D_{n,i})$ in the superspace ring $\Omega$ and that the quotient lies in $\Omega^G$. This divisibility and membership is established on \cite[p. 11]{HS}.

\section{Hilbert Series Upper Bound}
\label{sec:Upper}

Throughout this section, we work over the finite field $\FF_q$ where $q$ is a prime power and write $G = GL_n(\FF_q)$ for the full finite general linear group. Our goal is to bound the Hilbert series $\Hilb(SR_G;t,z)$ of the $G$-superspace coinvariant ring $SR_G$ from above.  In order to do this, we need to recall a $GL_n(\FF_q)$-analog of the Schur polynomial due to Macdonald \cite{MacdonaldVariations}.

\subsection{Macdonald's 7th variation of the Schur polynomial} Let $\lambda = (\lambda_1  \geq  \cdots \geq \lambda_n \geq 0)$ be a partition with at most $n$ parts. Macdonald introduced \cite{MacdonaldVariations} the quotient of Moore matrix determinants
\begin{equation}
\label{eq:7-definition}
    s^{(q)}_\lambda = s^{(q)}_\lambda(x_1,x_2,\dots,x_n) := \frac{\det M_{\lambda + \rho}}{\det M_\rho}
\end{equation}
where $\rho = (n-1,\dots,1,0)$ and $\lambda  + \rho$ is componentwise addition. Equation~\eqref{eq:7-definition} is a $GL_n(\FF_q)$-analog of Cauchy's original bialternant definition of the usual Schur polynomial. Macdonald proved that $s^{(q)}_\lambda$ is a polynomial in $S$ (rather than merely a rational expression) and that $s^{(q)}_\lambda \in S^G$ is $G$-invariant.\footnote{In contrast to the characteristic 0 case, 
the $s^{(q)}_\lambda$ do {\bf not} span $S^G$ over $\FF_q$. This is even true when $n=1$.}

When $\lambda = (1^{n-i},0^i)$, the polynomial $s^{(q)}_\lambda$ reduces to the Dickson invariant $D_{n,i}$, so Dickson invariants can be thought of as `$GL_n(\FF_q)$-elementary symmetric polynomials'. The `complete homogeneous case' where $\lambda$ has a single part plays a critical role in the next subsection.

\subsection{A superspace identity} 
This subsection proves a superspace identity (Lemma~\ref{lem:syzygy}) which will prove useful in our study of $SR_G$. Before stating this identity, we introduce some terminology.

\begin{definition}
    \label{def:P-Q}
    For $1 \leq i \leq n$, define a polynomial $P_i(t) \in S[t]$ by
    \begin{equation}
        P_i(t) := \prod_{a_i, a_{i+1},\dots,a_n \in \FF_q} (t + a_i x_i + a_{i+1} x_{i+1} +  \cdots + a_n x_n).
    \end{equation}
    Also define $P_{n+1}(t) := t$. For $1 \leq i \leq n$, define $Q_i \in S$ by
    \begin{equation}
        Q_i := P_{i+1}(x_i) = 
        \prod_{a_{i+1},\dots,a_n \in \FF_q} (x_i + a_{i+1} x_{i+1} +  \cdots + a_n x_n).
    \end{equation}
\end{definition}

The polynomials $Q_i \in S$ were proven by M\`ui \cite{Mui} to freely generate the invariant ring $S^U$ where $U \subseteq GL_n(\FF_q)$ is the  subgroup of lower triangular matrices with $1$'s on the diagonal. We record some simple facts about the $P_i(t)$ and $Q_i$.

\begin{lemma}
    \label{lem:simple-PQ}
    Let $D_{n,i} \in S$ be the Dickson invariant.
    \begin{enumerate}
        \item We have $D_{n,0} = \prod_{i=1}^n Q_i^{q-1}.$
        \item The polynomial $P_i(t) \in S[t]$ expands as
        \[
        P_i(t) = t^{q^{n-i+1}} + \sum_{j=0}^{n-i} D_{n-i+1,j}(x_i,x_{i+1},\dots,x_n) \cdot t^{q^j}
        \]
        where $D_{n-i+1,j}(x_i,x_{i+1},\dots,x_n)$ is the (symmetric) polynomial $D_{n-i+1,j}$ in the variables $x_i, x_{i+1},\dots,x_n$.
        \item We have $P_i(x_j) = 0$ for all $j \geq i$.
        \item In the ring $S[u,v]$ there hold the identities
        \[
        P_i(u+v) = P_i(u) + P_i(v) \quad \text{and} \quad P_i(a \cdot u) = a \cdot P_i(u)
        \]
        for any scalar $a \in \FF_q$.
        \item We have the recursion
        \[
        P_i(t) = P_{i+1}(t)^q - Q_i^{q-1} \cdot P_{i+1}(t).
        \]
    \end{enumerate}
\end{lemma}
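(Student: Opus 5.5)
The plan is to prove the five items in the order (4), (2), (3), (5), (1), since each later item uses the earlier ones.

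\emph{Items (4), (2), (3).} Set $W_i := \FF_q x_i + \cdots + \FF_q x_n$, viewed as a multiset of $q^{n-i+1}$ linear forms indexed by $(a_i,\dots,a_n) \in \FF_q^{n-i+1}$. Then $P_i(t) = \prod_{w \in W_i}(t+w)$. For (4), note that $P_i$ is a product of linear factors $t+w$ over an additive group of linear forms. Such a polynomial is an $\FF_q$-linear (linearized) polynomial. The cleanest route is to observe that $P_i(t)$ is exactly the left side of \eqref{eq:dickson-defining} for the $n-i+1$ variables $x_i,\dots,x_n$. This identification immediately gives item (2): $P_i(t) = t^{q^{n-i+1}} + \sum_j D_{n-i+1,j}(x_i,\dots,x_n)\, t^{q^j}$. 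Item (4) then follows because each $t^{q^j}$ is additive in characteristic $p$ and fixed by scalars $a \in \FF_q$ (as $a^{q^j}=a$), and $P_i$ has coefficients in $S$. For (3), when $j \ge i$ the factor with $a_j = -1$ and all other $a_k = 0$ is $t - x_j$, which vanishes at $t = x_j$.

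\emph{Item (5).} Both sides are monic in $t$ of degree $q^{n-i+1}$. The right side $R(t) := P_{i+1}(t)^q - Q_i^{q-1}P_{i+1}(t)$ vanishes at $t = -w$ for every $w \in W_i$. To see this, write $w = a x_i + w'$ with $w' \in W_{i+1}$. By (4) and (3), $P_{i+1}(-w) = -a P_{i+1}(x_i) = -a Q_i$. Hence $R(-w) = (-aQ_i)^q - Q_i^{q-1}(-aQ_i) = 0$, using $a^q = a$.

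Working in $\mathrm{Frac}(S)[t]$, the $q^{n-i+1}$ roots $-w$ are distinct linear forms, so $\prod_{w}(t+w)$ divides $R(t)$. Comparing degrees and leading coefficients then gives $R = P_i$. The only point needing care is distinctness of the roots, which holds because the $x_k$ are independent variables.

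\emph{Item (1).} First, $D_{n,0}$ is the coefficient of $t$ in $P_1(t)$, so $D_{n,0} = P_1'(t)$, the derivative in $t$. Since $P_1$ is linearized, its derivative is the constant coefficient of $t^{q^0}$. Next, iterating (5) gives the coefficient of $t$ in $P_i$ as $-Q_i^{q-1}$ times the coefficient of $t$ in $P_{i+1}$, because $P_{i+1}^q$ contributes no linear term. Starting from $P_{n+1}(t) = t$, this yields
\[
D_{n,0} = (-1)^n \prod_{i=1}^n Q_i^{q-1}.
\]

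This exposes the main obstacle, which is the sign. When $q$ is even, $-1 = 1$ and there is no issue. When $q$ is odd, I would check the claimed sign directly. Alternatively, one can evaluate $P_1(t)/t = \prod_{w \ne 0}(t+w)$ at $t=0$: the nonzero forms pair up as $w,-w$, giving $\prod_{w\neq 0} w = (-1)^{(q^n-1)/2}\prod_{\text{pairs}} w^2$. One then reconciles this with the iteration, presumably absorbing the sign via the identity $\prod_{a \in \FF_q^\times} a = -1$ applied inside each $Q_i^{q-1}$.

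If the statement's sign convention genuinely differs, I would note that item (1) should be read up to the sign $(-1)^n$. The remaining items are routine verifications.
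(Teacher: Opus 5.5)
Items (2)--(4) are handled as in the paper: you identify $P_i(t)$ with the Dickson product in $x_i,\dots,x_n$ (with the $\sum$ in \eqref{eq:dickson-defining} read as $\prod$), then use additivity of $t\mapsto t^{q^j}$.

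For (5) you take a different but valid route. You show that $R(t)=P_{i+1}(t)^q-Q_i^{q-1}P_{i+1}(t)$ vanishes at the $q^{n-i+1}$ distinct points $t=-w$, and then compare monic polynomials of equal degree in $\mathrm{Frac}(S)[t]$. The paper instead writes $P_i(t)=\prod_{a\in\FF_q}P_{i+1}(t+ax_i)=\prod_{a\in\FF_q}(P_{i+1}(t)+aQ_i)$ using (4), and applies $\prod_{a\in\FF_q}(x+ay)=x^q-xy^{q-1}$. Both arguments rest on the same two facts: $P_{i+1}$ is additive, and $P_{i+1}(x_i)=Q_i$. The paper's is a direct polynomial identity that needs no distinctness-of-roots check; yours is an equally rigorous root count.

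On (1), your computation is correct, and you should commit to it rather than hope the sign is absorbed. It is not absorbed. Each nonzero $w\in W_1$ is uniquely $c\,w_0$ with $c\in\FF_q^\times$ and $w_0=x_i+a_{i+1}x_{i+1}+\cdots+a_nx_n$ for a unique $i$. Hence
\[
\prod_{w\neq 0} w=\prod_{w_0}\Bigl(\prod_{c\in\FF_q^\times}c\Bigr)w_0^{q-1}=(-1)^{N}\prod_{i=1}^nQ_i^{q-1},\qquad N=1+q+\cdots+q^{n-1},
\]
and $N\equiv n \pmod 2$ for odd $q$. So $\prod_{a\in\FF_q^\times}a=-1$ is precisely what produces the sign $(-1)^n$.

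Concretely, for $n=1$, $q=3$ you get $P_1(t)=t^3-x_1^2t$. The $D_{1,0}$ of item (2) is therefore $-x_1^2$, while item (1) asserts $D_{1,0}=Q_1^2=x_1^2$. So with the normalization used in (2), item (1) is false whenever $q$ and $n$ are both odd. The paper's one-line claim that (1) follows from \eqref{eq:dickson-defining} overlooks this.

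Item (1) does hold exactly for the paper's Moore-determinant normalization. There $D_{n,0}=\det M_{(n,\dots,1)}/\det M_\rho=(\det M_\rho)^{q-1}$, and $\det M_\rho\doteq Q_1\cdots Q_n$ with $c^{q-1}=1$ for $c\in\FF_q^\times$. But that normalization introduces signs $(-1)^{n-i+1-j}$ into (2), so the two items cannot both hold as stated.

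The discrepancy is harmless downstream, because (1) is only used up to $\doteq$. It enters Lemma~\ref{lem:syzygy}, and \eqref{eq:tilde-factor}, which only feeds degree counts and the colon ideals of Lemma~\ref{lem:tilde-quotient}. The correct statement is yours: $D_{n,0}=(-1)^n\prod_{i}Q_i^{q-1}$.
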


\begin{proof}
    (1) and (2) both follow from Equation~\eqref{eq:dickson-defining} and Definition~\ref{def:P-Q}.  If $j \geq i$ then $t - x_j$ is among the factors of $P_i(t)$ and (3) follows.
    (4) follows from the explicit formula in (2) and the fact that the Frobenius map $x \mapsto x^q$ is an endomorphism of the ring $S[u,v]$ which fixes $\FF_q$.

    For (5), we start with the identity in $\FF_q[x,y]$ given by 
    \begin{equation}
    \label{eq:two-variable-identity}
        \prod_{a \in \FF_q} (x + a \cdot  y) = x^q - x \cdot y^{q-1}.
    \end{equation}
    Equation~\eqref{eq:two-variable-identity} follows from the standard identity $\prod_{a \in \FF_q} (t-a) = t^q - t$ by substituting $t \to \frac{x}{y}$ and multiplying by $y^q$. Definition~\ref{def:P-Q} gives
\begin{align}
    P_i(t) &= \prod_{a \in \FF_q} \prod_{a_{i+1},\dots,a_n \in \FF_q} (t + a x_i + a_{i+1} x_{i+1} + \cdots + a_n x_n) \\ 
    &= \prod_{a \in \FF_q} P_{i+1}(t + a \cdot x_i)
    \\&= \prod_{a \in \FF_q} (P_{i+1}(t) + a \cdot P_{i+1}(x_i))
     \\&= \prod_{a \in \FF_q} (P_{i+1}(t) + a \cdot Q_i)
\end{align}
where the penultimate equality uses (4). Applying \eqref{eq:two-variable-identity} proves (5).
\end{proof}

We are ready to state the fundamental superspace syzygy which will bound $SR_G$ from above. The relevant identity is homogeneous of exterior degree 1 and involves Macdonald's 7th Schur function variation $s^{(q)}_\lambda$ evaluated on partial variable sets. For $m \geq 0$ we abbreviate $s_m^{(q)} := s_{(m)}^{(q)}$.

\begin{lemma}
    \label{lem:syzygy}
    Let $1 \leq i \leq n$ and consider the element $f_i \in \Omega$ given by 
    \begin{equation}
        f_i := \sum_{j=1}^i s_{i-j}^{(q)}(x_i, x_{i+1}, \dots, x_{n-1}, x_n) \cdot  d (D_{n,n-j}).
    \end{equation}
    For $i' < i$, the coefficient of $\theta_{i'}$ in $f_i$ vanishes. On the other hand, we have
    \begin{equation}
    \text{coefficient of $\theta_i$ in $f_i$} \doteq  
    \frac{D_{n,0}}{Q_i} = \frac{Q_1^{q-1} \cdots Q_n^{q-1}}{Q_i}.
    \end{equation}
\end{lemma}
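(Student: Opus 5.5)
The plan is to compute every partial derivative $\partial_k D_{n,m}$ at once by differentiating the Dickson identity, and then to recognise the coefficient vector of $f_i$ as a vector of interpolation weights supported on $x_i,\dots,x_n$.

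\emph{Step 1: the partials of Dickson invariants.} By \eqref{eq:dickson-defining} and Lemma~\ref{lem:simple-PQ}(3) with $i=1$, we have $P_1(x_k)=0$ for every $k$, that is,
\[
x_k^{q^n}+\sum_{m=0}^{n-1} D_{n,m}\, x_k^{q^m}=0 .
\]
I will apply $\partial_l$ to this identity. In characteristic $p$ every $x_k^{q^m}$ with $m\ge 1$ has zero derivative. Only the $m=0$ term contributes extra, namely $D_{n,0}\,\delta_{kl}$. Hence
\[
\sum_{m=0}^{n-1} x_k^{q^m}\,\partial_l D_{n,m} = -\,\delta_{kl}\, D_{n,0}.
\]
Write $M=(x_k^{q^m})_{k,m}$, which is the Moore matrix $M_\rho$ up to a reordering of columns and so is invertible over $\mathrm{Frac}(S)$. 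The identity then says that the matrix $(\partial_l D_{n,m})_{m,l}$ equals $-D_{n,0}\,M^{-1}$.

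\emph{Step 2: reduce to a linear system.} Set $c_m := s^{(q)}_{i-n+m}(x_i,\dots,x_n)$ for $n-i\le m\le n-1$ and $c_m:=0$ for $m<n-i$. Then
\[
\text{coefficient of }\theta_k\text{ in } f_i=\sum_m c_m\,\partial_k D_{n,m} = -D_{n,0}\, w_k ,
\]
where $w := c^{T}M^{-1}$ is the unique vector with $\sum_k w_k x_k^{q^m}=c_m$ for all $0\le m\le n-1$. It therefore suffices to find an explicit $w$ satisfying this system with $w_k=0$ for $k<i$ and $w_i\doteq 1/Q_i$; uniqueness then does the rest.

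\emph{Step 3: expand the bialternant.} Put $N:=n-i+1$. Expand $\det M_{(N-1+r,\,N-2,\dots,0)}(x_i,\dots,x_n)$ along its first column. By \eqref{eq:7-definition} this gives
\[
s^{(q)}_r(x_i,\dots,x_n)=\sum_{l=i}^n C_l\, x_l^{q^{N-1+r}},
\]
where $C_l$ is $\pm$ the complementary minor divided by $\det M_\rho(x_i,\dots,x_n)$. The key point is that $C_l$ is independent of $r$.

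Since $N-1+(i-n+m)=m$, this expansion gives $\sum_{l\ge i} C_l x_l^{q^m}=c_m$ for every $m\ge n-i$. For $m<n-i=N-1$ the same first-column expansion computes a determinant whose first column $x_l^{q^m}$ repeats one of the columns with exponents $q^{N-2},\dots,q^0$. That determinant is zero, so $\sum_{l\ge i}C_lx_l^{q^m}=0=c_m$. Hence $w_l=C_l$ for $l\ge i$ and $w_l=0$ for $l<i$. This proves the vanishing claim.

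\emph{Step 4: identify $C_i$.} The minor complementary to $x_i$ is $\det M_\rho(x_{i+1},\dots,x_n)$. I will use the standard factorization of Moore determinants,
\[
\det M_\rho(x_i,\dots,x_n)\doteq Q_i\cdot \det M_\rho(x_{i+1},\dots,x_n),
\]
which follows by viewing the determinant as a polynomial in $x_i$ vanishing on all $x_i=-\sum_{j>i} a_j x_j$ and comparing degrees. This gives $C_i\doteq 1/Q_i$, so the $\theta_i$-coefficient is $\doteq D_{n,0}/Q_i$. Lemma~\ref{lem:simple-PQ}(1) then gives the second expression $Q_1^{q-1}\cdots Q_n^{q-1}/Q_i$.

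The main obstacle is Step 3. The indices must line up exactly so that the range $m\ge n-i$ where $c_m\neq 0$ matches the exponents $q^{N-1+r}$, and so that the vanishing for $m<n-i$ comes from repeated columns. I also need to check the sign and scalar bookkeeping, which the $\doteq$ absorbs.
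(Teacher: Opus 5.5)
Your proof is correct, and it takes a genuinely different route from the paper's.

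\emph{The paper's route.} It applies $d$ to the recursion $P_i(t)=P_{i+1}(t)^q-Q_i^{q-1}P_{i+1}(t)$ and iterates. This writes each $d(D_{n,n-j})$ as a triangular combination of the forms $\prod_{k<r}Q_k^{q-1}\cdot Q_r^{q-2}\,dQ_r$ for $r\le j$. It then has to prove the convolution identity of Lemma~\ref{lem:value-of-C} between $s^{(q)}_m(x_i,\dots,x_n)$ and $s^{(q)}_{(1^k)}(x_{r+1},\dots,x_n)$. It does this by reducing $t^{q^{n-i+m}}$ modulo $P_i(t)$ and applying Cramer's rule. Triangularity then comes from the fact that $dQ_r$ involves only $\theta_r,\dots,\theta_n$, and the leading coefficient from $\partial_iQ_i=D_{n-i,0}(x_{i+1},\dots,x_n)$.

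\emph{Your route.} You differentiate the single identity $P_1(x_k)=0$ to get the whole Jacobian at once: $(\partial_l D_{n,m})=-D_{n,0}M^{-1}$ with $M=(x_k^{q^m})$. This turns the coefficient vector of $f_i$ into $-D_{n,0}$ times the unique solution of a Moore-type linear system. The cofactor expansion of the bialternant for $s^{(q)}_r(x_i,\dots,x_n)$ then identifies that solution. The cofactors are independent of $r$, and the lower equations vanish by repeated columns. So the solution is the first row of the inverse of the Moore matrix $M_{(n-i,\dots,1,0)}$ in the variables $x_i,\dots,x_n$, padded with zeros.

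\emph{What each buys.} Both arguments ultimately rest on Cramer's rule for this same Moore matrix. Yours avoids the recursion, the telescoping and the reduction modulo $P_i(t)$, and it computes every $\theta_k$-coefficient of $f_i$ explicitly. Its one extra input is $\det M_{(n-i,\dots,0)}(x_i,\dots,x_n)\doteq Q_i\cdot\det M_{(n-i-1,\dots,0)}(x_{i+1},\dots,x_n)$. This is standard, and it is essentially the fact $\det M_\rho\doteq Q_1\cdots Q_n$ that the paper itself invokes in Section~\ref{sec:Between}. In return, the paper's route gives the compact factored form $f_i=(-1)^{i-1}\prod_{k<i}Q_k^{q-1}\cdot Q_i^{q-2}\,dQ_i$ in terms of M\`ui's invariants, immediate from \eqref{eq:syzygy-six} and Lemma~\ref{lem:value-of-C}. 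It also gives the $e$/$h$-style convolution identity that the authors highlight in their remark.
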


The proof of Lemma~\ref{lem:syzygy} is technical and may be skipped on a first reading.

\begin{proof}
    Lemma~\ref{lem:simple-PQ} (5) gives the equation in $S[t]$:
    \begin{equation}
        \label{eq:syzygy-one}
        P_i(t) = P_{i+1}(t)^q - Q_i^{q-1} \cdot P_{i+1}(t).
    \end{equation}
    We apply the Euler operator $d$ to both sides of Equation~\eqref{eq:syzygy-one}. Since $q= 0$ in $\FF_q$, the product and chain rules yield the following equation in $\Omega[t]$:
    \begin{equation}
        \label{eq:syzygy-two}
        d P_i(t) =  Q_i^{q-2} \cdot dQ_i \cdot P_{i+1}(t) - Q_i^{q-1} \cdot dP_{i+1}(t).
    \end{equation}
    
    Iterating Equation~\eqref{eq:syzygy-two}, we obtain the following expansion of $d P_1(t)$ in terms of the $P_i(t)$'s:
    \begin{equation}
    \label{eq:syzygy-three}
        d P_1(t) = \sum_{j=0}^{n-1} d D_{n,j} \cdot t^{q^j} 
        = \sum_{r=1}^n (-1)^{r-1} \left( \prod_{k < r} Q_k^{q-1} \right) \cdot Q_{r}^{q-2} \cdot d Q_r \cdot P_{r+1}(t)
    \end{equation}
    where the first equality is justified by Lemma~\ref{lem:simple-PQ} (2).
    Let $1 \leq j \leq n$. Extracting the coefficient of $t^{q^{n-j}}$ in Equation~\eqref{eq:syzygy-three} gives
    \begin{equation}
    \label{eq:syzygy-four}
        d D_{n,n-j} = \sum_{r \leq j} (-1)^{r-1}  
         \cdot \{t^{q^{n-j}}\} P_{r+1}(t)  \times \left( \prod_{k < r} Q_k^{q-1} \right) \cdot  (Q_{r}^{q-2} \cdot d Q_r)
    \end{equation}
    where 
    $\{t^{q^{n-j}}\} P_{r+1}(t)$ is the coefficient of $t^{q^{n-j}}$ in $P_{r+1}(t)$. By Lemma~\ref{lem:simple-PQ} (2), the coefficient of $t^{q^{n-j}}$ in $P_{r+1}(t)$ vanishes for $r > j$, hence the sum in \eqref{eq:syzygy-four} ranging over $r \leq j$.

    The superspace element $f_i \in \Omega$ is defined as 
    \begin{equation}
      \label{eq:syzygy-five}
      f_i = \sum_{j \leq i} s_{i-j}^{(q)}(x_i, x_{i+1}, \dots, x_{n-1}, x_n) \cdot  d D_{n,n-j}
    \end{equation}
    Plugging \eqref{eq:syzygy-four} into \eqref{eq:syzygy-five} yields
    \begin{equation}
        \label{eq:syzygy-six}
        f_i = \sum_{r \leq i} C_{i,r} \times \left( \prod_{k < r} Q_k^{q-1} \right) \cdot  (Q_{r}^{q-2} \cdot d Q_r)
    \end{equation}
    where the coefficient $C_{i,r} \in S$ is given by
    \begin{equation}
        \label{eq:syzygy-seven}
        C_{i,r} := (-1)^{r-1} \cdot \sum_{j=r}^i s^{(q)}_{i-j}(x_i, x_{i+1},\dots,x_n) \cdot \{t^{q^{n-j}}\} P_{r+1}(t). 
    \end{equation}
    The following lemma gives the value of $C_{i,r}$ for all $r \leq i$. 

    \begin{lemma}
        \label{lem:value-of-C}
        For $r \leq i$, the value of $C_{i,r}$ is given by
        \begin{equation}
            C_{i,r} = (-1)^{r-1} \cdot \sum_{j=r}^i s^{(q)}_{i-j}(x_i, x_{i+1},\dots,x_n) \cdot 
           s_{(1^{j-r})}^{(q)}(x_{r+1}, x_{r+2}, \dots, x_n)
            = \begin{cases} (-1)^{r-1} & r = i, \\
        0 & r < i. \end{cases}
        \end{equation}
    \end{lemma}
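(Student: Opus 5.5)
Throughout, write $A := \{x_i, x_{i+1}, \dots, x_n\}$ (so $N := |A| = n-i+1$) and $B := \{x_{r+1}, \dots, x_n\}$, and use $h_m(A) := s^{(q)}_m(A)$, with the convention $h_m := 0$ for $m < 0$. The plan has three steps: identify the coefficients of $P_{r+1}(t)$, dispose of the case $r = i$ directly, and for $r < i$ recognize $C_{i,r}$ as a ratio of Moore determinants with a zero column.

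\emph{Step 1: the first equality.} By Lemma~\ref{lem:simple-PQ}~(2), $P_{r+1}(t) = t^{q^{n-r}} + \sum_{k} D_{n-r,k}(B)\, t^{q^k}$. Hence for $r < j \le n$ the coefficient $\{t^{q^{n-j}}\}P_{r+1}(t)$ equals $D_{n-r,n-j}(B)$. Since $D_{m,k} = s^{(q)}_{(1^{m-k})}$ in $m$ variables and $(n-r)-(n-j) = j-r$, this is $s^{(q)}_{(1^{j-r})}(B)$. For $j = r$ the coefficient is the leading coefficient $1 = s^{(q)}_\emptyset$. Substituting into \eqref{eq:syzygy-seven} gives the first equality.

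\emph{Step 2: the case $r = i$.} Only the term $j = i$ survives, and it equals $h_0(A) \cdot 1 = 1$. Therefore $C_{i,i} = (-1)^{i-1}$.

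\emph{Step 3: the case $r < i$.} Write $P_{r+1}(t) = \sum_{l} c_l\, t^{q^l}$, a $q$-polynomial in $t$. Reindexing by $l = n-j$, the sum becomes
\[
\sum_{l} c_l \, h_{l-(N-1)}(A).
\]
Terms with $l < N-1$ contribute nothing, consistent with $h_m = 0$ for $m < 0$. The key observation is that, as a function of $l$,
\[
h_{l-N+1}(A) \cdot \det M_{\rho_N}(A) = \det M_{(l, N-2, \dots, 1, 0)}(A)
\]
for every $l \ge 0$. Here $\rho_N = (N-1, \dots, 0)$ is the staircase in $N$ variables. For $l \ge N-1$ this is just the definition \eqref{eq:7-definition} with $\lambda = (l-N+1)$. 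For $0 \le l \le N-2$ the right side vanishes, because it has a repeated column.

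Multilinearity of the determinant in the first column then gives
\[
\det M_{\rho_N}(A) \cdot \sum_l c_l\, h_{l-N+1}(A) = \det\bigl(\,P_{r+1}(x_a) \mid x_a^{q^{N-2}} \mid \cdots \mid x_a\,\bigr)_{a = i, \dots, n}.
\]
Since $r < i$, we have $A \subseteq B$, so Lemma~\ref{lem:simple-PQ}~(3) gives $P_{r+1}(x_a) = 0$ for all $a \ge i$. The first column is therefore zero and the right-hand side vanishes. Because $\det M_{\rho_N}(A) \ne 0$ in the domain $S$, we conclude $C_{i,r} = 0$.

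The main point needing care is the index bookkeeping in Step 3. One must check that the reindexed range $l \in [n-i, n-r]$ matches exactly the support of the $q$-polynomial $P_{r+1}$ restricted to $l \ge N-1 = n-i$. One must also check that the lower-degree terms with $l < N-1$, which are present in $P_{r+1}$ but absent from the sum, are harmless because their determinants vanish.
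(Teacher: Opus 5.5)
Your proof is correct. It uses the same mechanism as the paper: the Moore determinant $\det M_{\rho_N}(A)$ with its first column replaced, together with the vanishing of $P_{r+1}(t)$ at $x_i,\dots,x_n$. The packaging, however, is genuinely leaner.

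\textbf{The paper's route.} The paper works in $K[t]$ with $K=\FF_q(x_1,\dots,x_n)$. It first shows, via Frobenius iteration and induction on $m$, that $t^{q^{n-i+m}}$ reduces modulo $P_i(t)$ to a $K$-combination of the powers $t^{q^\ell}$ with $\ell \le n-i$. It then identifies the top coefficient of this reduction as $s^{(q)}_m(x_i,\dots,x_n)$ by Cramer's rule. Extending linearly gives \eqref{eq:value-of-c} for an arbitrary $q$-polynomial $F(t)$. Finally it applies this to $F=P_{r+1}(t)$, which reduces to $0$ because $P_i(t)\mid P_{r+1}(t)$.

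\textbf{Your route.} You bypass the reduction modulo $P_i(t)$ entirely. Multilinearity of the determinant in its first column turns $\sum_l c_l\, s^{(q)}_{l-N+1}(A)\cdot\det M_{\rho_N}(A)$ directly into a determinant whose first column is $\bigl(P_{r+1}(x_a)\bigr)_{a\ge i}=0$. The terms with $l\le N-2$, which appear in $P_{r+1}(t)$ but not in the sum, are absorbed by your repeated-column observation.

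\textbf{Comparison.} Your version is shorter. It needs neither the existence and uniqueness of $q$-reduced forms nor the Frobenius-preservation argument. In exchange, the paper's route records a structural fact: the $s^{(q)}_m(A)$ are exactly the coefficients in the reduction of $t^{q^{N-1+m}}$ modulo $P_i(t)$. This is the analogue of complete homogeneous symmetric polynomials arising from reduction modulo $\prod_a (t-x_a)$.

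The only point you assert without comment is that $\det M_{\rho_N}(A)\neq 0$. This is immediate, since it is the denominator in \eqref{eq:7-definition} for the variable set $A$ and satisfies $\det M_{\rho_N}(A)\doteq Q_i\cdots Q_n$.
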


    \begin{proof} (of Lemma~\ref{lem:value-of-C})
    Let $K := \FF_q(x_1,\dots,x_n)$ be the fraction field of $S$.  
    We regard $P_i(t)$ as a polynomial in $K[t]$. By Lemma~\ref{lem:simple-PQ} (2), $P_i(t) \in K[t]$ is monic  of degree $q^{n-i+1}$ and only involves the $t$-powers $t^{q^\ell}$ for $0 \leq \ell \leq n-i+1$.
    For any $m \geq 0$, we claim that there are unique coefficients $c_{m,\ell} \in K$ so that 
    \begin{equation}
    \label{eq:claimed-reduction}
        t^{q^{n-i+m}} \equiv \sum_{\ell = 0}^{n-i} c_{m,\ell} \cdot t^{q^\ell} \mod P_i(t).
    \end{equation}
    Indeed, since $\deg P_i(t) = q^{n-i+1}$, the uniqueness of the $c_{m,\ell}$ will follow from their existence. 
    When $m = 0$ we have 
    \[
    c_{0,\ell} = \begin{cases}
        1 & \ell = n-i, \\
        0 & \text{otherwise}.
    \end{cases}
    \]
    When $m = 1$ we have $c_{1,\ell} = -D_{n-i+1,\ell}(x_i,\dots,x_n)$ by Lemma~\ref{lem:simple-PQ} (2).
    Observe that the Frobenius ring map $x \mapsto x^q$ preserves the ideal $(P_i(t)) \subseteq K[t]$.
    Consequently, we have
    \begin{equation}
        \label{eq:frobenius-implication}
        f(t) \equiv g(t) \mod P_i(t) \quad \Rightarrow \quad f(t)^q \equiv g(t)^q \mod P_i(t) \quad \text{for all $f(t),g(t) \in K[t].$}
    \end{equation}
    For $m > 1$, iterating \eqref{eq:frobenius-implication} $m-1$ times to the modular equation
    \begin{equation}
        t^{q^{n-i+1}} \equiv \sum_{\ell = 0}^{n-i} c_{1,\ell} \cdot t^{q^\ell} \mod P_i(t)
    \end{equation}
    yields
    \begin{equation}
    \label{eq:modular-reduction}
        t^{q^{n-i+m}} = (t^{q^{n-i+1}})^{q^{m-1}} \equiv \sum_{\ell = 0}^{n-i} c_{1,\ell}^{q^{m-1}} \cdot t^{q^{\ell + m-1}} \mod P_i(t).
    \end{equation}
    Every power $t^{q^{\ell+m-1}}$ appearing on the right-hand side of the congruence \eqref{eq:modular-reduction} satisfies \[\ell + m - 1 < n-i+m\]
    and by induction on $m$ each term $c_{1,\ell}^{q^{m-1}} \cdot t^{q^{\ell + m-1}}$ in this sum may be reduced as in \eqref{eq:claimed-reduction}.

    We use Cramer's Rule to study the coefficient $c_{m,n-i} \in K$ appearing in the modular equation \eqref{eq:claimed-reduction}.  By Lemma~\ref{lem:simple-PQ} (3), we have $P_i(x_j) = 0$ whenever $j \geq i$. Equation~\eqref{eq:claimed-reduction} therefore gives rise to the linear system over $K$ given by
    \begin{equation}
    \begin{pmatrix}
        x_i^{q^{n-i}} & x_i^{q^{n-i-1}} & \cdots & x_i \\
        x_{i+1}^{q^{n-i}} & x_{i+1}^{q^{n-i-1}} & \cdots & x_{i+1} \\
         & & \ddots & \\
        x_n^{q^{n-i}} & x_n^{q^{n-i-1}} & \cdots & x_n
    \end{pmatrix} 
    \begin{pmatrix}
        c_{m,n-i} \\ c_{m,n-i-1} \\ \vdots \\ c_{m,0}
    \end{pmatrix} =
    \begin{pmatrix}
        x_i^{q^{n-i+m}} \\ x_{i+1}^{q^{n-i+m}} \\ \vdots \\ x_n^{q^{n-i+m}}
    \end{pmatrix}.
    \end{equation}
    In particular, when $\ell = n-i$ Cramer's Rule and the definition of $s^{(q)}_m$ give
    \begin{equation}
        c_{m,n-i} = s_m^{(q)}(x_i, x_{i+1}, \dots, x_n) \in S.
    \end{equation}
    For any polynomial $F(t) = \sum_{\ell \geq 0} b_\ell \cdot t^{q^\ell} \in K[t]$ only involving the powers $t^{q^\ell}$ for $\ell \geq 0$, we therefore have a reduction
    \begin{equation}
        F(t) \equiv \sum_{\ell = 0}^{n-i} c_{F(t),\ell} \cdot t^{q^\ell} \mod P_i(t)
    \end{equation}
    for some $c_{F(t),\ell} \in K$ so that 
    \begin{equation}
    \label{eq:value-of-c}
        c_{F(t),n-i} = \sum_{\ell \geq n-i} b_\ell \cdot s_{\ell - n + i}^{(q)}(x_i, x_{i+1},\dots,x_n).
    \end{equation}

     Given $r \leq i$, we apply Equation~\eqref{eq:value-of-c} to $F(t) = P_{r+1}(t)$. For $r < i$, we have $P_{i}(t) \mid P_{r+1}(t)$ in $S[t]$ so that $P_{r+1}(t) \equiv 0 \mod P_{i}(t)$. Therefore
    \begin{align}
        0 &= c_{P_{r+1}(t),n-i} \\
        &= \sum_{\ell \geq n-i} s_{\ell-n+i}^{(q)}(x_i, \dots, x_n) \cdot \{t^{q^\ell}\} P_{r+1}(t)
    \end{align}
    where the second equality used Equation~\eqref{eq:value-of-c}. Since we have
    \begin{equation}
        \{ t^{q^{n-j}} \} P_{r+1}(t) = D_{n-r,n-j}(x_{r+1}, \dots, x_n) = s^{(q)}_{(1^{j-r})}(x_{r+1},\dots,x_n),
    \end{equation}
    making the change of variables $\ell \mapsto n - j$ yields Lemma~\ref{lem:value-of-C} in the case $r < i$. On the other hand, if $r = i$ we have
    \begin{equation}
        C_{i,i} = (-1)^{i-1} \cdot s_0^{(q)}(x_i, x_{i+1},\dots,x_n) \cdot \{ t^{q^{n-i}} \} P_{i+1}(t) = (-1)^{i-1} \cdot 1 \cdot 1.
    \end{equation}
    This completes the proof of Lemma~\ref{lem:value-of-C}.
    \end{proof}
    


     We use Lemma~\ref{lem:value-of-C} to complete the proof of Lemma~\ref{lem:syzygy} as follows.
     Since $d Q_i$ only involves the exterior variables $\theta_i, \theta_{i+1}, \dots, \theta_n$, by \eqref{eq:syzygy-six} and Lemma~\ref{lem:value-of-C}  the coefficient of $\theta_{i'}$ in $f_i$ vanishes whenever $i' < i$. Furthermore, again by \eqref{eq:syzygy-six}, the coefficient of $\theta_i$ in $f_i$ is
    \begin{equation}
    \label{eq:desired-coefficient}
       (-1)^{i-1} \cdot  \prod_{k < i} Q_k^{q-1} \cdot Q_i^{q-2} \cdot \partial_i Q_i \doteq 
        \prod_{k < i} Q_k^{q-1} \cdot Q_i^{q-2} \cdot \prod_{k > i} Q_k^{q-1} = \frac{D_{n,0}}{Q_i}
    \end{equation}
    where $\partial_i$ is partial differentiation with respect to $x_i$.
    The $\doteq$ in \eqref{eq:desired-coefficient} uses
    \begin{align}
        \partial_i Q_i &=  \left[ \frac{\partial}{\partial t} P_{i+1}(t) \right]_{t \to x_i} \\ &= \left[ \frac{\partial}{\partial t} \left( t^{q^{n-i}} +  \sum_{j=0}^{n-i-1} D_{n-i,j}(x_{i+1},\dots,x_n) \cdot t^{q^j} \right) \right]_{t \to x_i} \\
        &= D_{n-i,0}(x_{i+1},\dots,x_n) \\
        &\doteq \prod_{k > i} Q_k^{q-1}
    \end{align}
    where the first equality is Definition~\ref{def:P-Q}, the second is Lemma~\ref{lem:simple-PQ} (2), the third follows because $q^j = 0$ in $\FF_q$ for all $j > 0$,  the fourth is Lemma~\ref{lem:simple-PQ} (1). (When $i=n$ we interpret $D_{0,0}$ in the empty variable set to be 1.) This completes the proof of Lemma~\ref{lem:syzygy}. 
    \end{proof}

\begin{remark}
    Lemma~\ref{lem:value-of-C} is reminiscent of the classical symmetric function theory involution
    \[
    \sum_{i=0}^n (-1)^i e_i h_{n-i} = \begin{cases}
        1 & n = 0,\\
        0 & n > 0.
    \end{cases}
    \]
    Here $e_i$ is the elementary symmetric function and $h_{n-i}$ is the complete homogeneous symmetric function. Macdonald proved \cite[(7.9)]{MacdonaldVariations} a different $GL_n(\FF_q)$-convolution identity for the $s^{(q)}_m$ and $s^{(q)}_{(1^m)}$ involving the fixed variable set $\{x_1,\dots,x_n\}$.
\end{remark}

\subsection{Hilbert series upper bound}
In this subsection we apply the technical Lemma~\ref{lem:syzygy} to obtain an upper bound on the bigraded Hilbert series of $SR_G$. The twisted wedge product appearing in the Hartmann--Shepler Theorem~\ref{thm:HS} plays a key role.

For any $J \subseteq [n]$, define $\widetilde{D}_{n,J} \in S$ to be the polynomial
\begin{equation}
    \widetilde{D}_{n,J} := \prod_{j \in J} Q_j^{q-2} \times \prod_{\substack{1 \leq i \leq n \\ i \notin J}} Q_i^{q-1}.
\end{equation}
Observe that when $J = \{i\}$ is a singleton, the polynomial $\widetilde{D}_{n,\{i\}}$ is (up to a nonzero scalar) the coefficient of $\theta_i$ appearing in the polynomial $f_i$ of Lemma~\ref{lem:syzygy}. For any subset $J \subseteq [n]$, define
\begin{equation}
    Q_J := \prod_{j \in J} Q_j \in S.
\end{equation}
Lemma~\ref{lem:simple-PQ} (1) implies
\begin{equation}
\label{eq:tilde-factor}
    \widetilde{D}_{n,J} \times Q_J = D_{n,0}.
\end{equation}
Taking degrees the factorization \eqref{eq:tilde-factor} gives
\begin{equation}
    \label{eq:degree-of-tildeD}
    \deg \widetilde{D}_{n,J} = q^n - 1 - \sum_{j \in J} q^{n-j}.
\end{equation}
The following purely commutative quotient ring will be relevant in our study of $SR_G$.

\begin{lemma}
    \label{lem:tilde-quotient}
    Let $J \subseteq [n]$ and let $I_G = (D_{n,0}, D_{n,1},\dots,D_{n,n-1}) \subseteq S$ be the ideal generated by $G$-invariants with vanishing constant term. We have the equality of ideals  in $S$:
    \begin{equation}
    \label{eq:colon-ideal-identification}
        \left(
                I_G:Q_J
        \right) = (\widetilde{D}_{n,J}, D_{n,1}, D_{n,2}, \dots, D_{n,n-1}).
    \end{equation} 
    Furthermore, the quotient of $S$ by either of these ideals has Hilbert series
    \begin{align}
        \Hilb\left( \frac{S}{\left(
                I_G:Q_J
        \right)}; t  \right) &= 
        \Hilb( S/(\widetilde{D}_{n,J}, D_{n,1}, D_{n,2}, \dots, D_{n,n-1});t) \\
        &= \prod_{i=1}^{n-1} [q^n - q^i] \times \left[ q^n - 1 - \sum_{j \in J} q^{n-j} \right]_t.
    \end{align}
\end{lemma}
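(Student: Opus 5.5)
This is a direct application of Lemma~\ref{lem:colon-regular} with $f_1 = D_{n,0}$, $f_1' = Q_J$, $f_1'' = \widetilde{D}_{n,J}$ and $f_{i+1} = D_{n,i}$ for $1 \leq i \leq n-1$. The plan is to verify the two hypotheses of that lemma, read off the ideal equality, and then compute the Hilbert series.

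First, the sequence $D_{n,0}, D_{n,1}, \dots, D_{n,n-1}$ must be regular. By Dickson's theorem these form an algebraically independent generating set of $S^G$, and $S$ is a finite module over $S^G$, since each $x_i$ is a root of the monic polynomial $P_1(t) \in S^G[t]$ by Lemma~\ref{lem:simple-PQ}. Hence the only common zero of the $D_{n,i}$ over $\overline{\FF}_q$ is $\zero$. To see this directly from \eqref{eq:dickson-defining}: if all $D_{n,i}$ vanish at a point $\xx$, then $\prod_{a}(t + a\cdot \xx) = t^{q^n}$, which forces every $\FF_q$-linear combination $a \cdot \xx$ to vanish, and in particular every coordinate $x_i = 0$. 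Lemma~\ref{lem:regular-locus} then gives regularity. All $D_{n,i}$ are homogeneous of positive degree $q^n - q^i$.

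Second, the factorization $D_{n,0} = Q_J \cdot \widetilde{D}_{n,J}$ is exactly \eqref{eq:tilde-factor}, which follows from Lemma~\ref{lem:simple-PQ}(1). Both factors are homogeneous, being products of the homogeneous $Q_j$.

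Lemma~\ref{lem:colon-regular} now yields $(I_G : Q_J) = (\widetilde{D}_{n,J}, D_{n,1}, \dots, D_{n,n-1})$, which is the ideal equality \eqref{eq:colon-ideal-identification}. Its proof also showed that $\widetilde{D}_{n,J}, D_{n,1}, \dots, D_{n,n-1}$ is a regular sequence when $\widetilde{D}_{n,J}$ is nonconstant, again via Lemma~\ref{lem:regular-locus}, since a zero of $\widetilde{D}_{n,J}$ is a zero of $D_{n,0}$. Applying Lemma~\ref{lem:regular-hilbert} with the degree formula \eqref{eq:degree-of-tildeD} and $\deg D_{n,i} = q^n - q^i$ gives the stated product formula.

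The only delicate point is the degenerate case where $\widetilde{D}_{n,J}$ is constant, which happens when $q = 2$ and $J = [n]$. There the colon ideal is all of $S$, the Hilbert series is $0$, and the bracket $[q^n - 1 - \sum_{j\in J} q^{n-j}]_t = [0]_t = 0$ agrees. Lemma~\ref{lem:colon-regular} already treats this case separately, so the formula holds uniformly. The main obstacle, if any, is the regularity of the Dickson invariants; it is standard, and the Nullstellensatz argument above handles it.
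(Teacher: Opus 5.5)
Your proposal is correct and follows the paper's proof: apply Lemma~\ref{lem:colon-regular} to the factorization $D_{n,0} = Q_J \cdot \widetilde{D}_{n,J}$, then read off the Hilbert series from Lemma~\ref{lem:regular-hilbert} and \eqref{eq:degree-of-tildeD}. You also spell out two points the paper leaves implicit: the regularity of $D_{n,0},\dots,D_{n,n-1}$ via Lemma~\ref{lem:regular-locus}, and the degenerate case $q=2$, $J=[n]$, where $[0]_t = 0$ keeps the formula valid.
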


The Hilbert series in Lemma~\ref{lem:tilde-quotient} is exactly the contribution of $J \subseteq [n]$ to the claimed formula for $\Hilb(SR_G;t,z)$ in Equation~\eqref{eq:intro-hilbert}, without the factor of $z^{|J|}$. We will see that this is no accident. 

\begin{proof}
    The identification \eqref{eq:colon-ideal-identification} is a consequence of Lemma~\ref{lem:colon-regular}. The Hilbert series formula follows from Equation~\eqref{eq:degree-of-tildeD} and Lemma~\ref{lem:regular-hilbert}. 
\end{proof}

We are ready to bound the Hilbert series of $SR_G$ from above. Given polynommials $f, g \in \ZZ[t,z]$, we write $f \leq g$ to mean that $g-f \in \ZZ[t,z]$ has nonnegative coefficients. Observe that the following bound matches Equation~\eqref{eq:intro-hilbert}.

\begin{lemma}
    \label{lem:hilbert-upper-bound}
    Let $G = GL_n(\FF_q)$. The Hilbert series $\Hilb(SR_G;t,z)$ of the superspace coinvariant ring $SR_G$ satisfies
    \begin{equation}
        \Hilb(SR_G;t,z) \leq \prod_{i=1}^{n-1} [q^n - q ^i]_t \times \sum_{J \subseteq [n]} z^{|J|} \cdot \left[ q^n - 1 - \sum_{j \in J} q^{n-j} \right]_t.
    \end{equation}
\end{lemma}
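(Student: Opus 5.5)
We bound $SR_G$ from above by a filtration of $\Omega = \bigoplus_{J} S\cdot\theta_J$ whose successive quotients are cyclic $S$-modules annihilated by the colon ideals $(I_G:Q_J)$ of Lemma~\ref{lem:tilde-quotient}. Fix an exterior degree $k$ and totally order the $k$-subsets $J\subseteq[n]$ by a linear extension of the Gale order. The summand $\Omega_{\bullet,k}$ then has a filtration whose associated graded pieces are copies of $S\cdot\theta_J$. It will suffice to show the following: for each $J$, the ideal $SI_G$ contains an element of the form
\[
g_J \cdot \theta_J + (\text{terms } h_{J'}\theta_{J'} \text{ with } J' \text{ strictly Gale-later than } J)
\]
for every $g_J \in (\widetilde{D}_{n,J},D_{n,1},\dots,D_{n,n-1})$. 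Granting this, the image of $S\theta_J$ in the associated graded of $SR_G$ is a quotient of $S/(I_G:Q_J)$, shifted by $z^{|J|}$. Summing the Hilbert series from Lemma~\ref{lem:tilde-quotient} over all $J$ then gives the claimed coefficientwise inequality.

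For the generators $D_{n,1},\dots,D_{n,n-1}$ this is immediate, since they lie in $S^G_+\subseteq SI_G$ and we may multiply by $\theta_J$ directly, with no correction terms. For $\widetilde{D}_{n,J}$ we use Lemma~\ref{lem:syzygy} together with Theorem~\ref{thm:HS}. Write $J=\{j_1<\cdots<j_k\}$ and form the wedge product $f_{j_1}\wedge\cdots\wedge f_{j_k}$ of the elements of Lemma~\ref{lem:syzygy}. Each $f_i$ is an $S$-combination of the $d(D_{n,m})$ and involves only $\theta_{i'}$ with $i'\ge i$, with $\theta_i$-coefficient $\doteq D_{n,0}/Q_i$. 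Expanding the wedge product, every $\theta_{J'}$ that appears satisfies $J\le_\Gale J'$, and the $\theta_J$ coefficient is
\[
\doteq \prod_{j\in J} \frac{D_{n,0}}{Q_j} = D_{n,0}^{k-1}\,\widetilde{D}_{n,J}.
\]
Now $d(D_{n,m})=d(D_{n,m})/(\det M_\rho)^{0}$ are exactly the Hartmann--Shepler generators for $G=GL_n(\FF_q)$, where $e=q-1$. Their twisted wedge product $d D_{n,m_1}\curlywedge\cdots\curlywedge dD_{n,m_k}$ equals the ordinary wedge divided by $D_{n,0}^{k-1}$, up to scalar, because $(\det M_\rho)^{q-1}\doteq D_{n,0}$. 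By Theorem~\ref{thm:HS} this quotient lies in $\Omega^G$, and it lies in $\Omega^G_+$ since $k\ge 1$. Expanding $f_{j_1}\wedge\cdots\wedge f_{j_k}$ multilinearly as $\sum(\text{polynomial coefficients})\cdot dD_{n,m_1}\wedge\cdots\wedge dD_{n,m_k}$ and dividing termwise by $D_{n,0}^{k-1}$, we obtain the element
\[
F_J:=\sum (\text{coefficients})\cdot\bigl(dD_{n,m_1}\curlywedge\cdots\curlywedge dD_{n,m_k}\bigr)\in SI_G .
\]
Its $\theta_J$-coefficient is $\doteq\widetilde{D}_{n,J}$, and its remaining terms are Gale-later. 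Multiplying $F_J$ by arbitrary elements of $S$ and adding the $D_{n,i}\theta_J$ for $i\ge1$ then produces every $g_J$ in the ideal, as required.

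The main obstacle is the leading-term claim for $f_{j_1}\wedge\cdots\wedge f_{j_k}$. We must verify that the $\theta_J$ coefficient is exactly the product of the diagonal coefficients and that all other surviving monomials are Gale-larger. This is a triangularity argument: the coefficient matrix $(\text{coeff of }\theta_{i'}\text{ in }f_{j_r})$ vanishes for $i'<j_r$, so any nonzero $k\times k$ minor on columns $J'=\{i'_1<\cdots<i'_k\}$ needs a permutation $\sigma$ with $i'_{\sigma(r)}\ge j_r$. Sorting this forces $i'_r\ge j_r$ for all $r$, which is $J\le_\Gale J'$. When $J'=J$, the only such permutation is the identity, so the minor is the diagonal product. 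A secondary point needing care is that the termwise division by $D_{n,0}^{k-1}$ is legitimate. This follows from the closure statement in Theorem~\ref{thm:HS}, since each term in the expansion is individually a twisted wedge product of generators times a polynomial.
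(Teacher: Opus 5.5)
Your proposal is correct and follows essentially the same route as the paper. The paper also forms $f_J = f_{j_1}\curlywedge\cdots\curlywedge f_{j_k}$ from the elements of Lemma~\ref{lem:syzygy} and uses Theorem~\ref{thm:HS} to place it in $SI_G$ with Gale-minimal term $\widetilde{D}_{n,J}\,\theta_J$; it then straightens modulo $(\widetilde{D}_{n,J},D_{n,1},\dots,D_{n,n-1})$ and invokes Lemma~\ref{lem:tilde-quotient}. Your version adds detail the paper leaves implicit: the $S$-linear extension of $\curlywedge$ via division by $D_{n,0}^{k-1}$, the minor-based Gale-triangularity argument, and the explicit filtration.
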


\begin{proof}
    Let $\Omega^G_{*,1} \subseteq \Omega^G$ be the subspace of $\Omega^G$ consisting of $G$-invariants of homogeneous exterior degree 1.
    Recall the Hartmann--Shepler twisted wedge product $\curlywedge$ associated to $G = GL_n(\FF_q)$ appearing in Theorem~\ref{thm:HS}. For all $1 \leq i \leq n$, Lemma~\ref{lem:syzygy} furnishes an element $f_i \in \Omega$. By Theorem~\ref{thm:HS} and the explicit formula in Lemma~\ref{lem:syzygy} we have
    \begin{equation}
    \label{eq:fi-membership}
        f_i \in S \cdot \Omega^G_{*,1} \quad \text{for all } 1 \leq i \leq n.
    \end{equation}
    For any $J = \{j_1 < \cdots < j_k \} \subseteq [n]$, we define
    \begin{equation}
        f_J := f_{j_1} \curlywedge \cdots \curlywedge f_{j_k}.
    \end{equation}
    Thanks to the membership \eqref{eq:fi-membership}, Theorem~\ref{thm:HS} and Lemma~\ref{lem:syzygy} imply that 
    \begin{equation}
        \label{eq:fJ-membership}
        f_J \in SI_G = (\Omega^G_+).
    \end{equation}
    Lemma~\ref{lem:syzygy} and the definition of $\curlywedge$ give the following two facts about $f_J$:
    \begin{itemize}
        \item If $K \subseteq [n]$ satisfies $|K| = |J|$, the coefficient of $\theta_K$ in $f_J$ vanishes unless $J \leq_\Gale K$.
        \item Up to a nonzero scalar, the coefficient of $\theta_J$ in $f_J$ is $\widetilde{D}_{n,J}$.
    \end{itemize}

    Fix $J \subseteq [n]$. The superspace ideal $SI_G$ contains the elements
    \[
    f_J, \quad D_{n,1} \cdot \theta_J, \quad D_{n,2} \cdot \theta_J, \quad \dots \quad , \quad D_{n,n-1} \cdot \theta_J.
    \]
    The unique Gale-minimal exterior monomial appearing in each of these elements is $\theta_J$. The coefficients of $\theta_J$ in these superspace elements are (up to nonzero scalar)
    \[
    \widetilde{D}_{n,J}, \quad D_{n,1}, \quad D_{n,2}, \quad \dots \quad , \quad D_{n,n-1}.
    \]
    Let $\BBB_J$ be any homogeneous $\FF_q$-vector space basis of the commutative quotient ring \[S/(\widetilde{D}_{n,J}, D_{n,1}, D_{n,2}, \dots, D_{n,n-1}).\]
    Any polynomial $g \in S$ has a unique expression of the form
    \begin{equation}
        g \equiv \sum_{b \in \BBB_J} c_b \cdot b \mod  (\widetilde{D}_{n,J}, D_{n,1}, D_{n,2}, \dots, D_{n,n-1})\quad \text{for some } c_b \in \FF_q.
    \end{equation}
    The product $g \cdot \theta_J$ therefore satisfies 
    \begin{equation}
    \label{eq:straightening}
        g \cdot \theta_J \equiv \sum_{b \in \BBB_J} c_b \cdot b \cdot \theta_J + \mathbf{E} \mod SI_G
    \end{equation}
    where the `error term' $\mathbf{E}$ satisfies
    \begin{equation}
        \mathbf{E} \in \bigoplus_{J <_\Gale K} S \cdot \theta_K.
    \end{equation}
    Since the straightening relation~\eqref{eq:straightening} holds for any $J \subseteq [n]$, we conclude that 
    \begin{equation}
        \Hilb(SR_G;t,z) \leq \sum_{J \subseteq [n]} z^{|J|} \times \Hilb(S/(\widetilde{D}_{n,J}, D_{n,1}, D_{n,2}, \dots, D_{n,n-1});t)
    \end{equation}
    and an application of Lemma~\ref{lem:tilde-quotient} completes the proof.
\end{proof}

\section{An Operator Theorem for $I^\perp_G$}
\label{sec:Operator}

In order to bound $SR_G$ from below, we first study commutative inverse systems. To motivate the next result, we recall a theorem of Steinberg \cite{SteinbergDifferential} which states that the characteristic 0 inverse system $I_{\symm_n}^\perp$ associated to the $\symm_n$-coinvariant ideal $I_{\symm_n}\subseteq S$ is generated by the Vandermonde determinant \[\delta = \sum_{w \in \symm_n} \sign(w) \cdot x_{w(1)}^{n-1} \cdots x_{w(n-1)}^1 x_{w(n)}^0.\]
(In characteristic 0, we identify the divided power ring element $y_i^{(a)}$ with $\frac{x_i}{a!}$.) We define a modular analog $\delta^{(q)} \in \DD$ of the Vandermonde determinant and prove a $GL_n(\FF_q)$-version of Steinberg's theorem.

\begin{proposition}
    \label{prop:steinberg}
    Let $G = GL_n(\FF_q)$ and consider the element $\delta^{(q)} \in \DD$ given by
    \begin{equation}
        \delta^{(q)} := \sum_{w \in \symm_n} \sign(w) \cdot y_{w(1)}^{(q^n-1-1)} y_{w(2)}^{(q^n-q-1)} \cdots y_{w(n)}^{(q^n - q^{n-1}-1)}.
    \end{equation}
    Then $\delta^{(q)}$ generates the inverse system $I_G^\perp \subseteq \DD$ under the $\odot$-action of $S$.
\end{proposition}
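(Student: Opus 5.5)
The plan is to use Macaulay duality for the complete intersection $S/I_G$, which reduces everything to showing that $\delta^{(q)}$ lies in $I_G^\perp$. That membership I will get by expressing the pairing against $\delta^{(q)}$ as a character sum over $\FF_{q^n}^n$.

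\emph{Step 1: reduction to membership.} By Dickson's theorem (or Theorem~\ref{thm:between-polynomial-generators} with $e=q-1$), $I_G = (D_{n,0},\dots,D_{n,n-1})$ is generated by $n$ homogeneous elements. The quotient $S/I_G$ is finite-dimensional, because $S$ is a finitely generated module over $S^G$ (integral over the invariants of a finite group). Hence the zero locus of these generators over $\overline{\FF}_q$ is $\{\zero\}$, and Lemma~\ref{lem:regular-locus} says they form a regular sequence. Lemma~\ref{lem:regular-hilbert} then gives
\[
\Hilb(S/I_G;t)=\prod_{i=0}^{n-1}[q^n-q^i]_t .
\]
This has top degree
\[
N := \sum_{i=0}^{n-1} (q^n-q^i-1),
\]
which is exactly the degree of $\delta^{(q)}$, and $(S/I_G)_N$ is one-dimensional. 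Moreover $S/I_G$ is Gorenstein, so its socle is $(S/I_G)_N$.

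Suppose $\delta^{(q)}\in I_G^\perp$ and $\delta^{(q)}\neq 0$; nonvanishing is clear since distinct $w$ give distinct monomials. Then $\delta^{(q)}$ spans $(I_G^\perp)_N$, which is dual to $(S/I_G)_N$. I claim this forces $\ann(\delta^{(q)})=I_G$. Let $f\notin I_G$ be homogeneous. Because the socle sits in degree $N$, there is some $h$ with $fh\notin I_G$ of degree $N$. Then $\langle fh,\delta^{(q)}\rangle\neq 0$, and so $h\odot(f\odot\delta^{(q)})\neq0$. Consequently $S\odot\delta^{(q)}\cong S/\ann(\delta^{(q)}) = S/I_G$ as graded spaces. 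Since $S\odot\delta^{(q)}\subseteq I_G^\perp$ and both have the same Hilbert series, equality follows.

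\emph{Step 2: membership, the main obstacle.} Since $\odot$ is an action, it suffices to show that $\phi(g):=\langle g,\delta^{(q)}\rangle$ vanishes for all $g=D_{n,i}\cdot f$ of degree $N$. Set $Q:=q^n$. The key observation is that
\[
\deg\bigl(g\cdot\det M_\rho\bigr)=N+\frac{q^n-1}{q-1}=n(Q-1).
\]
The coefficient of $\prod_k x_k^{Q-1}$ in $g\cdot \det M_\rho$ is, up to sign, $\sum_w \sign(w)\cdot\{\text{coefficient of } x_{w(1)}^{Q-1-1}\cdots x_{w(n)}^{Q-q^{n-1}-1}\text{ in } g\}$, which is $\pm\phi(g)$.

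I will convert this coefficient into a point count. For $x\in\FF_Q$ one has $\sum_{x}x^a=-1$ when $a>0$ and $(Q-1)\mid a$, and $\sum_x x^a=0$ otherwise (including $a=0$, since $Q=0$ in $\FF_q$). In degree $n(Q-1)$, the only monomial with all exponents positive multiples of $Q-1$ is $\prod_k x_k^{Q-1}$. Therefore
\[
\phi(g)=\pm\sum_{x\in\FF_Q^n} g(x)\det M_\rho(x).
\]

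\emph{Step 3: evaluating Dickson invariants.} Take a point $x\in\FF_Q^n$ with $\det M_\rho(x)\neq0$, so that $x_1,\dots,x_n$ are $\FF_q$-independent and their $\FF_q$-span is all of $\FF_Q$. By \eqref{eq:dickson-defining}, $t^{Q}+\sum_i D_{n,i}(x)t^{q^i}=\prod_{c\in\FF_Q}(t+c)=t^Q-t$. Hence $D_{n,i}(x)=0$ for $i\ge1$ and $D_{n,0}(x)=-1$.

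For $i\ge1$, the function $D_{n,i}f\det M_\rho$ vanishes identically on $\FF_Q^n$, so the sum is $0$. For $i=0$, the sum becomes $-\sum_x f(x)\det M_\rho(x)$. Here $f\det M_\rho$ has degree $(n-1)(Q-1)$, so no monomial has all $n$ exponents positive multiples of $Q-1$, and this sum is also $0$.

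The delicate points are the sign bookkeeping in identifying $\phi(g)$ with the top coefficient, and confirming the degree count $N+\frac{q^n-1}{q-1}=n(q^n-1)$. I expect Step 2 to be where care is needed.
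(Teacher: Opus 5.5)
Your proof is correct, and it takes a different route from the paper in both halves.

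\textbf{Generation.} The paper only sketches a dimension count: it asserts that $S\odot\delta^{(q)}$ contains $|G|$ elements with distinct lexicographic leading terms, which reduces everything to showing $\delta^{(q)}\in I_G^\perp$. You instead use that $S/I_G$ is a graded complete intersection, hence Gorenstein with one-dimensional socle in degree $N$. So any nonzero element of $(I_G^\perp)_N$ generates $I_G^\perp$. This is cleaner and fully rigorous, and it uses nothing about $\delta^{(q)}$ beyond its degree and nonvanishing. The paper's leading-term count is less self-contained, but it is exactly what yields Steinberg's monomial basis (Corollary~\ref{cor:steinberg-basis}) for free; with your approach that corollary would still need the leading-term argument. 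One small imprecision: $S_d\odot\delta^{(q)}$ lies in degree $N-d$, so the two Hilbert series agree only after invoking palindromicity of $\prod_i[q^n-q^i]_t$. Comparing total dimensions is enough anyway.

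\textbf{Membership.} Both arguments reduce $\langle D_{n,i}f,\delta^{(q)}\rangle$ to the coefficient of $(x_1\cdots x_n)^{q^n-1}$ in $D_{n,i}f\cdot\det M_\rho$; the paper phrases this as a constant term. The paper then uses $D_{n,i}\det M_\rho=\det M_{(n,\dots,i+1,i-1,\dots,0)}$. Every monomial of this Moore determinant contains some $x_j^{q^n}$, so the coefficient vanishes for degree reasons.

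You instead convert that coefficient, via power sums over $\FF_{q^n}$, into $\pm\sum_{x\in\FF_{q^n}^n}(D_{n,i}f\cdot\det M_\rho)(x)$. You then evaluate the Dickson invariants at $\FF_q$-bases of $\FF_{q^n}$ using \eqref{eq:dickson-defining}: there $D_{n,i}=0$ for $i\ge 1$ and $D_{n,0}=-1$. The $i=0$ case collapses to a sum of degree $(n-1)(q^n-1)$, which vanishes.

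The paper's route is shorter and purely combinatorial. Yours needs only the generating-function characterization of the $D_{n,i}$, not their Moore-determinant quotient formula. It also identifies pairing against $\delta^{(q)}$ with a point-count functional on $\FF_{q^n}^n$, which explains conceptually why the Dickson invariants kill $\delta^{(q)}$.
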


Adding 1 to exponents  appearing in $\delta^{(q)}$ yields the degrees $q^n-1, q^n-q,\dots,q^n -q^{n-1}$ of the fundamental invariants of $GL_n(\FF_q)$, just as in the case of $\symm_n$.

\begin{proof}
    It is not difficult to find $\dim_{\FF_q}(S/I_G) = |G| = \prod_{i=0}^{n-1} (q^n - q^i)$ linearly independent elements in $S \odot \delta^{(q)}$ by considering lexicographical leading terms and applying appropriate operators of the form $(x_1^{b_1} \cdots x_n^{b_n}) \odot (-)$. So the proposition reduces to proving that $\delta^{(q)} \in I_G^\perp$, i.e.
\begin{equation}
    D_{n,i} \odot \delta^{(q)} = 0 \quad \text{for $i = 0,1,\dots,n-1.$}
\end{equation}

    Let $\varepsilon_n := \sum_{w \in \symm_n} \sign(w) \cdot w \in \FF_q[\symm_n]$ be the antisymmetrizing element of the symmetric group algebra. In particular, we have
    \begin{equation}
        \delta^{(q)} = \varepsilon_n \cdot (y_1^{(q^n-1-1)} y_2^{(q^n - q - 1)} \cdots y_n^{(q^n - q^{n-1} - 1)}).
    \end{equation}
    Let $p := \sum_{i=1}^n (q^n - q^{i-1} - 1)$ be the sum of the exponents appearing in $\delta^{(q)}$. Define an $\FF_q$-linear operator $\sigma: S_p \to \FF_q$ by
    \begin{equation}
        \sigma(f) := \text{coefficient of $x_1^{q^n - 1-1} x_2^{q^n - q - 1} \cdots x_n^{q^n - q^{n-1}-1}$ in } \varepsilon_n \cdot f.
    \end{equation}
    for $f \in S_p$. It is not too hard to see that
    \begin{equation}
        f \odot \delta^{(q)} = \sigma(f) \quad \quad \text{for all $f \in S_p$.}
    \end{equation}
    Let $0 \leq i  \leq n-1$. Since $\odot$ induces a perfect pairing $\langle-,-\rangle : S_j \otimes \DD_j \to \FF_q$ for all $j$ and $\deg D_{n,i} = q^n - q^i$, in order to prove $D_{n,i} \odot \delta_n^{(q)} = 0$ it is enough to show
    \begin{equation}
        \label{eq:desired-sigma-equation}
        \sigma(D_{n,i} \cdot x_1^{b_1} \cdots x_n^{b_n} ) = 0 \quad \text{whenever} \quad
        b_1 + \cdots + b_n + q^n  -q^i = p.
    \end{equation}
    Starting with the left-hand side of  Equation~\eqref{eq:desired-sigma-equation}, we have
    \begin{align}
        \sigma(D_{n,i} \cdot x_1^{b_1} \cdots x_n^{b_n} )  &= 
        \text{coefficient of $x_1^{q^n - 1-1} x_2^{q^n - q - 1} \cdots x_n^{q^n - q^{n-1}-1}$ in } \varepsilon_n \cdot (D_{n,i} \cdot x_1^{b_1} \cdots x_n^{b_n}) \\
        &= \text{coefficient of $x_1^{q^n - 1-1} x_2^{q^n - q - 1} \cdots x_n^{q^n - q^{n-1}-1}$ in } D_{n,i} \times \varepsilon_n \cdot (x_1^{b_1} \cdots x_n^{b_n}) \\
        &= \text{constant term of } D_{n,i} \cdot x_1^{b_1} \cdots x_n^{b_n} \times \varepsilon_n \cdot (x_1^{-q^n +1+1} x_2^{-q^n + q + 1} \cdots x_n^{-q^n + q^{n-1} + 1})
    \end{align}
    where the second and third equalities used the fact that $D_{n,i}$ is $\symm_n$-invariant. The alternating expression $\varepsilon_n \cdot (x_1^{-q^n +1+1} x_2^{-q^n + q + 1} \cdots x_n^{-q^n + q^{n-1} + 1})$ on the previous line satisfies 
    \begin{equation}
        \varepsilon_n \cdot (x_1^{-q^n +1+1} x_2^{-q^n + q + 1} \cdots x_n^{-q^n + q^{n-1} + 1}) \doteq (x_1 \cdots x_n)^{1 - q^n} \times \det M_{\rho}
    \end{equation}
    where $\det M_{\rho}$ is the Moore determinant giving the denominator of $D_{n,i}$. We conclude that 
    \begin{multline}
    \label{eq:zero-constant-term}
        \sigma(D_{n,i} \cdot x_1^{b_1} \cdots x_n^{b_n} ) = \\ \pm \text{constant term of } 
        \det(M_{(n,n-1,\dots,i+1,i-1,\dots,1,0)}) \cdot x_1^{b_1 - q^n + 1} \cdots x_n^{b_n - q^n + 1}.
    \end{multline}
    However, since $0 \leq i \leq n-1$, the Moore matrix $M_{(n,n-1,\dots,i+1,i-1,\dots,1,0)}$ has first column entries $x_1^{q^n}, \dots, x_n^{q^n}$. As a consequence, every monomial $x_1^{c_1} \cdots x_n^{c_n}$ appearing in the expansion of $\det M_{(n,n-1,\dots,i+1,i-1,\dots,1,0)} $ satisfies $c_j = q^n$ for some $j$. In particular, since $b_j \geq 0$ for all $j$ we do {\bf not} have 
    $x_1^{c_1} \cdots x_n^{c_n} \times x_1^{b_1 - q^n + 1} \cdots x_n^{b_n - q^n + 1} = 1$. The constant term in \eqref{eq:zero-constant-term} therefore vanishes.
\end{proof}

Proposition~\ref{prop:steinberg} easily implies another result of Steinberg \cite{SteinbergBasis} on a basis for the polynomial $GL_n(\FF_q)$-coinvariant ring.

\begin{corollary}
    \label{cor:steinberg-basis}
    {\em (Steinberg \cite{SteinbergBasis})} Let $G = GL_n(\FF_q)$. The set of monomials
    \[
    \left\{
        x_1^{a_1} \cdots x_n^{a_n} \,:\, a_i < q^n - q^{n-i} \
    \right\}
    \]
    descends to a vector space basis of the coinvariant ring $R_G$.
\end{corollary}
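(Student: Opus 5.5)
The plan is to count the monomials, then show they span $R_G$ by a duality argument against $I_G^\perp$, using Proposition~\ref{prop:steinberg}.

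\textbf{Counting.} The number of monomials $x_1^{a_1}\cdots x_n^{a_n}$ with $a_i < q^n - q^{n-i}$ is $\prod_{i=1}^n (q^n - q^{n-i}) = |G|$. By Dickson's theorem the $D_{n,i}$ form a regular sequence of degrees $q^n - q^i$. Lemma~\ref{lem:regular-hilbert} then gives $\dim R_G = |G|$. It therefore suffices to show the monomials are linearly independent modulo $I_G$.

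\textbf{Reduction to the inverse system.} Suppose a nonzero combination $f = \sum c_a x^a$ of these monomials lies in $I_G$. Since $I_G$ is homogeneous, we may assume $f$ is homogeneous. Because $I_G^\perp = S \odot \delta^{(q)}$ by Proposition~\ref{prop:steinberg}, $f \in I_G$ forces $(gf) \odot \delta^{(q)} = 0$ for every $g \in S$, that is, $f \odot (g \odot \delta^{(q)}) = 0$ for all $g$. We will derive a contradiction by choosing a suitable $g$.

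\textbf{Choosing $g$.} Let $x^a$ be the lexicographically largest monomial in the support of $f$, with respect to a suitable term order. Write $N := (q^n-2,\, q^n-q-1,\, \dots,\, q^n-q^{n-1}-1)$ for the exponent vector of the term of $\delta^{(q)}$ with $w = \mathrm{id}$. The exponent $a_i$ satisfies $a_i \le q^n - q^{n-i} - 1$, which is the exponent attached to $y_i$ in the term of $\delta^{(q)}$ for the reversing permutation $w_0$. So we take $g = x^{b}$ with $b_i = (q^n - q^{n-i} - 1) - a_i \ge 0$, matched to that term. Then $x^b \odot \delta^{(q)}$ contains $\pm y^{(a)}$, coming from the $w_0$ term, together with contributions from the other permutations $w$. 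Those contributions are $y$-monomials $y^{(w\cdot N - b)}$, and they survive only when all their exponents are nonnegative.

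\textbf{Main obstacle.} We need $f \odot (x^b \odot \delta^{(q)}) = \pm c_a \neq 0$. This requires that no other monomial $x^{a'}$ in the support of $f$ pairs with a surviving term $y^{(w\cdot N - b)}$, $w \ne w_0$. Such a coincidence means $a' + b = w\cdot N$, i.e.\ $a' = w\cdot N - w_0\cdot N + a$. The exponents $q^n - q^{j}-1$ are strictly ordered, so $w\cdot N - w_0\cdot N$ is nonzero for $w \neq w_0$ and has a specific lex sign. Since $a'$ and $a$ are bounded by $w_0 \cdot N$, the claim is that this sign forces $a' >_{\mathrm{lex}} a$, contradicting maximality. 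The step to get right is the choice of term order: one of lex or reverse lex, reading the variables from $x_n$ down, should work. The first check is that the first coordinate where $w\cdot N$ and $w_0\cdot N$ differ always yields $a'$ larger there.

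If this direct comparison proves awkward, the fallback is to instead produce $|G|$ linearly independent elements $x^b \odot \delta^{(q)}$, as in the proof of Proposition~\ref{prop:steinberg}, via their leading terms $y^{(a)}$. The nondegenerate pairing between the span of the monomials and $I_G^\perp$ then gives a triangular, hence invertible, matrix. That shows the monomials map injectively into $(I_G^\perp)^* \cong R_G$, and the count finishes the proof.
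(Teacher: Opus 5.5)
Your proposal is correct and is essentially the paper's argument. The pairings $x^{a'}\odot(x^{b}\odot\delta^{(q)}) = x^{a'+b}\odot\delta^{(q)}$ that you make triangular are exactly the coefficients of $y^{(b)}$ in $x^{a'}\odot\delta^{(q)}$. So your main route is the transpose of the paper's observation that the elements $x^{a}\odot\delta^{(q)}$ have distinct leading terms $\pm y^{(w_0\cdot N-a)}$, and your fallback is the paper's proof.

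The step you left open settles as follows. The vector $w_0\cdot N=(q^n-q^{n-1}-1,\dots,q^n-q-1,q^n-2)$ is strictly increasing, so it is the lex-smallest rearrangement of $N$ when coordinates are read from $x_1$. Hence at the first index $k$ from the left where $w\cdot N\neq w_0\cdot N$, one has $(w\cdot N)_k>(w_0\cdot N)_k$. This gives $a'>_{\mathrm{lex}}a$ for ordinary lex ($x_1>\cdots>x_n$), so you should take $a$ lex-largest in the support of $f$.

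If you read from $x_n$ down instead, the inequality flips to $a'_k<a_k$, so your stated ``first check'' fails in that direction. There it is reverse lex, with $a$ revlex-largest, that works, not lex.
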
 

\begin{proof}
    By Proposition~\ref{prop:steinberg}, applying the given monomials $x_1^{a_1} \cdots x_n^{a_n}$ to $\delta^{(q)}$ under the $\odot$-action yields $\prod_{i=0}^{n-1} (q^n - q^i) = |G|$ elements of $I_G^\perp$ with distinct lexicographical leading terms. It follows that the given set of monomials descends to a linearly independent subset of $R_G$ of size $|G|$. Since $\dim_{\FF_q} R_G = |G|$, we are done.
\end{proof}

\section{Hilbert Series and Operator Theorem}
\label{sec:Hilbert}

Let $G = GL_n(\FF_q)$. In this section we prove our main results by calculating the Hilbert series of $SR_G$ and giving an operator-theoretic description of its inverse system; the proofs of these results will be intertwined. Before proceeding further, we give a precise description of the inverse system $I^\perp$ associated to a bigraded ideal $I \subseteq \Omega$, including the vector space in which $I^\perp$ lives.

\subsection{Superspace inverse systems}
For the current subsection, we work over an arbitrary field $\FF$.
Recall our notation $E = \wedge \{ \theta_1, \dots, \theta_n\}$ for the free rank $n$ exterior algebra over $\FF$. 
We have the {\em exterior differentiation} or {\em contraction} module structure
\begin{equation}
    \odot: E \otimes E \to E
\end{equation}
characterized by 
\begin{equation}
    \theta_i \odot \theta_J :=  \begin{cases}
        (-1)^{s-1} \theta_{J-i} & \text{ if $j_s = i$,} \\
        0 &\text{otherwise,}
    \end{cases}
\end{equation}
for any subset $J = \{j_1 < \cdots < j_k\} \subseteq [n]$. 
For any exterior degree $j$ we have a perfect pairing
\begin{equation}
    \langle -, - \rangle : E_j \otimes E_j \to \FF \quad \quad \langle f,g\rangle := f \odot g.
\end{equation}

In Section~\ref{sec:Background} we defined an action $\odot: S \otimes \DD \to \DD$ inducing a  perfect pairing $\langle -, - \rangle: S_i \otimes \DD_i \to \FF$ for all commutative degrees $i$. Making the identification $\Omega = S \otimes E$, we get an action
\begin{equation}
    \odot : \Omega \times (\DD \otimes  E) \to \DD \otimes  E \quad \quad (f_1 \otimes f_2) \odot (g_1 \otimes g_2) := (f_1 \odot g_1) \otimes (f_2 \odot g_2).
\end{equation}
For any fixed bidegree $(i,j)$, we have a perfect pairing
\begin{equation}
    \langle-,-\rangle: \Omega_{i,j} \otimes (\DD_i \otimes E_j) \to \FF
\end{equation}
given as before by $\langle f, g \rangle := f \odot g$.

Let $I \subseteq \Omega$ be a bihomogeneous ideal. The {\em inverse system} $I^\perp \subseteq \DD \otimes_\FF E$ is 
\begin{equation}
    I^\perp := \{ g \in \DD \otimes E \,:\, f \odot g = 0 \text{ for all } f \in I \}.
\end{equation}
It is not difficult to check that $I^\perp$ is a bigraded $\FF$-subspace of $\DD \otimes E$ which is closed under the $\odot$-action. Furthermore, since $\langle-,-\rangle$ is a perfect pairing on any bidegree, one has 
\begin{equation}
    \Hilb(\Omega/I;t,z) = \Hilb(I^\perp;t,z).
\end{equation}

\subsection{Modular higher Euler operators} We return to the setting of $\FF_q$.
For $j \geq 0$, define the {\em higher $\FF_q$-Euler operator} 
\begin{equation}
    d_j^{(q)}: \DD \otimes E \to \DD \otimes E 
\end{equation}
by the equation
\begin{equation}
    d_j^{(q)}(f) := \sum_{i=1}^n (x_i^{q^j} \odot f) \wedge \theta_i.
\end{equation}
Thus $d_j^{(q)}$ lowers commutative degree by $q^j$ while raising exterior degree by $1$. We prove that the operators $d_j^{(q)}$ preserve the superspace inverse system $SI_G^\perp \subseteq \DD \otimes E$ associated to $G = GL_n(\FF_q)$. In fact, this preservation holds for any subgroup $G \subseteq GL_n(\FF_q)$.

\begin{lemma}
    \label{lem:modular-euler-preservation}
    Let $G \subseteq GL_n(\FF_q)$ be any subgroup and let $SI_G = (\Omega^G_+) \subseteq \Omega$. For all $j \geq 0$, the operator $d_j^{(q)}$ preserves $SI_G^\perp$.
\end{lemma}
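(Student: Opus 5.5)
Write $\DDD := \DD \otimes E$. The plan is to dualize: find an operator $\Phi$ on $\Omega$ adjoint to $d_j^{(q)}$ under $\langle-,-\rangle$, then show $\Phi$ preserves $SI_G$. Unwinding definitions, for $f \in \Omega$ and $g \in \DDD$ we expect
\[
f \odot d_j^{(q)}(g) = \pm\, \Phi_j(f) \odot g, \qquad \Phi_j(f) := \sum_{i=1}^n x_i^{q^j} \cdot (\theta_i \odot f),
\]
where $\theta_i \odot (-)$ is exterior contraction extended to $\Omega$ as $\mathrm{id}_S \otimes (\theta_i \odot -)$. The sign should depend only on the exterior degree of $f$. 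Indeed, for $f = f_1 \otimes f_2$ we have
\[
f \odot \bigl((x_i^{q^j} \odot g_1) \otimes (g_2 \wedge \theta_i)\bigr) = \bigl((f_1 x_i^{q^j}) \odot g_1\bigr) \otimes \bigl(f_2 \odot (g_2 \wedge \theta_i)\bigr),
\]
because the $S$-action on $\DD$ is a module action. The exterior identity $f_2 \odot (g_2 \wedge \theta_i) = \pm (\theta_i \odot f_2) \odot g_2$ is checked on monomials $\theta_K$, $\theta_J$ using the sign convention for contraction. This is a routine sign bookkeeping step that I will verify on basis elements. Given this adjunction, if $\Phi_j(SI_G) \subseteq SI_G$, then for $g \in SI_G^\perp$ and $f \in SI_G$ we get $f \odot d_j^{(q)}(g) = \pm \Phi_j(f) \odot g = 0$. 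Strictly, one also needs the identity in all bidegrees, not just paired degrees, but the action version holds verbatim.

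Next I show $\Phi_j$ maps $SI_G$ into itself, in three steps.

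First, $\Phi_j$ is a (super)derivation of $\Omega$ over $S$ twisted appropriately. Contraction $\theta_i \odot -$ is an odd derivation of $E$, so
\[
\Phi_j(ab) = \Phi_j(a)\, b \pm a\, \Phi_j(b)
\]
for homogeneous $a,b$. Hence $\Phi_j$ of an ideal generated by a set $X$ lies in the ideal generated by $X \cup \Phi_j(X)$. Since $\Phi_j$ is $S$-linear, it suffices to show $\Phi_j(\Omega^G_+) \subseteq SI_G$.

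Second, I show $\Phi_j$ is $GL_n(\FF_q)$-equivariant, so it maps $\Omega^G$ into $\Omega^G$. The key point is that $\sum_i x_i^{q^j} \otimes \theta_i^*$, pairing the Frobenius-twisted coordinates with the dual basis, is invariant. Because $g \in GL_n(\FF_q)$ has entries fixed by Frobenius, $x \mapsto x^{q^j}$ is $\FF_q$-linear and commutes with the action on linear forms: $(g \cdot x_i)^{q^j} = g \cdot x_i^{q^j}$. So $\Phi_j$ is the contraction of the tensor corresponding to the identity map $V^* \to V^*$, composed with the equivariant Frobenius map $V^* \to S_{q^j}$. This makes $\Phi_j$ canonical, hence equivariant. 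This step I expect to be the main point needing care: writing $g$ as a matrix and confirming the contragredient actions on $x$'s and on contraction by $\theta_i$ match up. I will verify it directly with indices.

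Third, an element of $\Omega^G_+$ of positive exterior degree is sent to an element of $\Omega^G$ of positive commutative degree, since it picks up degree $q^j > 0$. It therefore has vanishing constant term and lies in $\Omega^G_+ \subseteq SI_G$. An element of exterior degree $0$ is sent to $0$. This completes the argument.
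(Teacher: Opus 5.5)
Your argument is the paper's proof. Your $\Phi_j$ is the paper's operator $\widetilde{d}^{(q)}_j$. Its $G$-equivariance comes, as in the paper, from the $\FF_q$-linear, $G$-equivariant Frobenius map $\theta_i\mapsto x_i^{q^j}$. The super Leibniz rule then gives $\Phi_j(SI_G)\subseteq SI_G$. Your third step, that $\Phi_j$ sends $\Omega^G_+$ into $\Omega^G_+$ because it raises commutative degree by $q^j$ and kills exterior degree $0$, spells out a detail the paper leaves implicit.

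One claim is wrong: the adjunction does not hold ``verbatim'' at the level of the action. With $f=1$ the left side is $d_j^{(q)}(g)$ and the right side is $0$. Contraction is an odd derivation, so commuting $f\odot(-)$ past right multiplication by $\theta_i$ gives, for bihomogeneous $f$ and $g$,
\[
f\odot d_j^{(q)}(g) = d_j^{(q)}(f\odot g) \pm \Phi_j(f)\odot g ,
\]
with a sign depending only on the exterior degrees. The first term is not zero in general. It vanishes for degree reasons when $f$ and $d_j^{(q)}(g)$ have the same bidegree, which is the case your monomial check covers.

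The repair is immediate. If $f\in SI_G$ and $g\in SI_G^\perp$, then $f\odot g=0$, so the extra term drops out and your conclusion stands. Alternatively, argue as the paper does. Because $SI_G$ is an ideal, $\odot$ is a module action, and the pairing is perfect in each bidegree, a bihomogeneous element lies in $SI_G^\perp$ as soon as it pairs to zero with every $h\in SI_G$ of its own bidegree. Then only the paired-degree identity, which you verify correctly on monomials, is ever needed.
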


\begin{proof}
 We know that $SI_G^\perp$ is a bigraded subspace of $\DD \otimes E$. Let $f \in SI_G^\perp$ be a bihomogeneous element. We need to prove
\begin{equation}
    h \odot d_j^{(q)}(f) = 0 \quad \text{for all $h \in SI_G$.}
\end{equation}
Since $\odot$ restricts to a perfect pairing $\Omega_{a,b} \otimes (\DD_a \otimes E_b) \to \FF_q$ for each bidegree $(a,b)$, it does not hurt to assume that $h \in SI_G$ is bihomogeneous of the same bidegree as $d_j^{(q)}(f)$ so that $h \odot d_j^{(q)}(f) \in \FF_q$ is a scalar. This scalar is a nonzero multiple of
\begin{equation}
\label{eq:want-to-be-zero}
    h \odot d_j^{(q)}(f)  \doteq \left( \sum_{i=1}^n  x_i^{q^j} \cdot (\theta_i \odot h) \right) \odot f \in \FF_q.
\end{equation}
With \eqref{eq:want-to-be-zero} as motivation, consider the superspace operator $\widetilde{d}^{(q)}_j: \Omega \to \Omega$ given by
\begin{equation}
    \widetilde{d}^{(q)}_j(h) := \left( \sum_{i=1}^n  x_i^{q^j} \cdot (\theta_i \odot h) \right).
\end{equation}
We make two claims about the operator $\widetilde{d}^{(q)}_j$.
\begin{enumerate}
    \item If $h \in \Omega^G$ is a $G$-invariant superspace element, so is $\widetilde{d}_j^{(q)}(h)$.
    \item The operator $\widetilde{d}^{(q)}_j$ preserves the ideal $SI_G$.
\end{enumerate}

To see why assertion (1) is true, start by observing that the $\FF_q$-linear map $\psi_j: V^* \to S$ characterized by $\psi_j: \theta_i \mapsto x_i^{q^j}$ is $G$-equivariant.\footnote{It is enough to show $\psi_j$ is $G$-equivariant for $G = GL_n(\FF_q)$. Recall that $ GL_n(\FF_q)$ is generated by elementary matrices corresponding to switching rows, scaling a single row, and adding an $\FF_q$-multiple of one row to another row. Since the Frobenius map $x \mapsto x^q$ is a ring map fixing $\FF_q$, it follows that $G$ acts on $\{\theta_1,\dots,\theta_n\}$ in the same way as $\{ x_1^{q^j}, \dots, x_n^{q^j}\}$.} 
The operator $\widetilde{d}^{(q)}_j$ acts on the superspace ring
\[\Omega = S \otimes E = S \otimes \wedge (V^*) \] 
by the contraction rule
\begin{equation}
    \widetilde{d}^{(q)}_j: 1 \otimes \lambda_1 \wedge \cdots \wedge \lambda_\ell \mapsto \sum_{r=1}^\ell (-1)^{r-1} \psi_j(\lambda_r) \otimes \lambda_1 \wedge \cdots \wedge \widehat{\lambda_r} \wedge \cdots \wedge \lambda_\ell
\end{equation}
for $\lambda_r \in V^*$ and $S$-linearity. For any $f \in S$ and $g \in G$ one has
\begin{align*}
    \widetilde{d}^{(q)}_j( g \cdot (f \otimes \lambda_1 \wedge \cdots \wedge \lambda_\ell)) &= 
    \widetilde{d}^{(q)}_j( (g \cdot f) \otimes (g \cdot \lambda_1 \wedge \cdots \wedge g \cdot \lambda_\ell)) \\ &= 
    \sum_{r=1}^\ell (-1)^{r-1} \left[ (g \cdot f) \psi_j(g \cdot \lambda_r) \right] \otimes (g \cdot \lambda_1 \wedge \cdots \wedge \widehat{g \cdot \lambda_r} \wedge \cdots \wedge g \cdot \lambda_\ell) \\
    &= \sum_{r=1}^\ell (-1)^{r-1} 
    \left[ (g \cdot f) (g \cdot \psi_j(\lambda_r)) \right] \otimes (g \cdot \lambda_1 \wedge \cdots \wedge \widehat{g \cdot \lambda_r} \wedge \cdots \wedge g \cdot \lambda_\ell) \\
    &= g \cdot \sum_{r=1}^\ell (-1)^{r-1} 
    (f  \psi_j(\lambda_r))  \otimes (\lambda_1 \wedge \cdots \wedge \widehat{\lambda_r} \wedge \cdots \wedge \lambda_\ell)  \\
    &= g \cdot \widetilde{d}^{(q)}_j(f \otimes \lambda_1 \wedge \cdots \wedge \lambda_\ell)
\end{align*}
so  that $\widetilde{d}_j^{(q)}$ inherits $G$-equivariance from $\psi_j$ and assertion (1) follows. 

For assertion (2), use the fact that $\widetilde{d}_j^{(q)}$ satisfies the following {\em superspace Leibniz rule}: for any bihomogeneous $f_1, f_2 \in \Omega$ one has 
\begin{equation}
    \widetilde{d}^{(q)}_j(f_1 \wedge f_2) = f_1 \wedge \widetilde{d}^{(q)}_j(f_2) \pm
    d^{(q)}_j(f_1) \wedge f_2
\end{equation}
where the sign $\pm$ depends on the fermionic degree of $f_2$. Taken in conjunction with assertion (1), this Leibniz property implies that $\widetilde{d}^{(q)}_G$ preserves the ideal $SI_G \subseteq \Omega$ generated by $\Omega^G_+$.

We return to the scalar \eqref{eq:want-to-be-zero} given by
\begin{equation}
     h \odot d_j^{(q)}(f)  \doteq \left( \sum_{i=1}^n  x_i^{q^j} \cdot (\theta_i \odot h) \right) \odot f = \widetilde{d}^{(q)}_j(h) \odot f.
\end{equation}
If $h \in SI_G$ and $f \in SI_G^\perp$, assertion (2) implies $\widetilde{d}^{(q)}_j(h) \in SI_G$ so that $\widetilde{d}^{(q)}_j(h) \odot f = 0$. We conclude that $d_j^{(q)}$ preserves $SI_G^\perp$.
\end{proof}

\subsection{$\DDD^{(q)}_J$-operators}
We use the operators $d^{(q)}_j$ to build more elaborate operators $\DDD^{(q)}_J$ on $\DD \otimes_{\FF_q} E$ which preserve the inverse system $SI_G^\perp$. Our methods should be compared to those for the characteristic 0 case of $\symm_n$ appearing in \cite[Sec. 4]{MRW}. We begin by defining two matrices with polynomial entries.

Let $\zz = (z_1,\dots,z_k)$ be a list of $k$ variables. The $k \times n$ {\em $\FF_q$-power matrix} $M_{n,k}(\zz)$ is the matrix with entries
\begin{equation}
    M_{n,k}(\zz)_{i,j} := z_i^{q^{n-j}}.
\end{equation}
For example, if $n = 5$ and $k = 3$ we have
\[
M_{5,3}(\zz) = M_{5,3}(z_1,z_2,z_3) = \begin{pmatrix}
    z_1^{q^4} & z_1^{q^3} & z_1^{q^2} & z_1^{q^1} & z_1^{q^0} \\
    z_2^{q^4} & z_2^{q^3} & z_2^{q^2} & z_2^{q^1} & z_2^{q^0} \\
    z_3^{q^4} & z_3^{q^3} & z_3^{q^2} & z_3^{q^1} & z_3^{q^0}
\end{pmatrix}.
\]
Recall from Definition~\ref{def:P-Q} that $P_i(t) \in S[t]$ is the degree $q^{n-i+1}$ polynomial
\[
P_i(t) = \prod_{a_i,\dots,a_n \in \FF_q} (t + a_i x_i + \cdots + a_n x_n).
\]
We define the $k \times n$ {\em $q$-factor matrix} $F_{n,k}(\xx,\zz)$ by
\begin{equation}
    F_{n,k}(\xx,\zz)_{i,j} := P_{j+1}(z_i)
\end{equation}
where we recall $P_{n+1}(t) = t$.
When $k = 3$ and $n = 4$ we have
\[
F_{5,3}(\xx,\zz) = \begin{pmatrix}
    P_2(z_1) & P_3(z_1) & P_4(z_1) & P_5(z_1) & P_6(z_1) \\ 
    P_2(z_2) & P_3(z_2) & P_4(z_2) & P_5(z_2) & P_6(z_2) \\ 
    P_2(z_3) & P_3(z_3) & P_4(z_3) & P_5(z_3) & P_6(z_3) \\ 
\end{pmatrix}
\]
where $P_6(t) = t$.

\begin{lemma}
    \label{lem:unitriangular}
    There exists an $n \times n$ lower unitriangular matrix $C$ with entries in $S$ such that 
    \[
    M_{n,k}(\zz)\cdot C = F_{n,k}(\xx,\zz).
    \]
\end{lemma}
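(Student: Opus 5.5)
The plan is to read off $C$ from the expansion of each $P_{j+1}(t)$ in Lemma~\ref{lem:simple-PQ}~(2). Column $j$ of $F_{n,k}(\xx,\zz)$ consists of the entries $P_{j+1}(z_i)$. Column $m$ of $M_{n,k}(\zz)$ consists of the entries $z_i^{q^{n-m}}$. The identity $M_{n,k}(\zz) \cdot C = F_{n,k}(\xx,\zz)$ holds entrywise as soon as, for each $j$, we have a polynomial identity in $S[t]$ of the form
\[
P_{j+1}(t) = \sum_{m=1}^n C_{m,j} \, t^{q^{n-m}}
\]
with $C_{m,j} \in S$. We then substitute $t = z_i$ in each row. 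Since these identities live in $S[t]$ and do not depend on $\zz$, the matrix $C$ is the same for every $k$ and every choice of variables $\zz$.

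By Lemma~\ref{lem:simple-PQ}~(2), applied with $i = j+1$, we have
\[
P_{j+1}(t) = t^{q^{n-j}} + \sum_{\ell=0}^{n-j-1} D_{n-j,\ell}(x_{j+1},\dots,x_n)\, t^{q^\ell}.
\]
For $j = n$ this reads $P_{n+1}(t) = t$, which agrees with the convention. Writing $t^{q^\ell} = t^{q^{n-m}}$ with $m = n-\ell$, we set
\[
C_{j,j} = 1, \qquad C_{m,j} = D_{n-j,\,n-m}(x_{j+1},\dots,x_n) \text{ for } m > j, \qquad C_{m,j} = 0 \text{ for } m < j.
\]
These entries lie in $S$, since the Dickson invariants are polynomials by Dickson's theorem. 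The nonzero entries satisfy $m \geq j$, so $C$ is lower triangular with $1$'s on the diagonal.

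The only care needed is the index bookkeeping. The range $0 \le \ell \le n-j-1$ corresponds exactly to $j < m \le n$. Only $q$-power exponents $t^{q^\ell}$ with $\ell \le n-j$ occur, so every term of $P_{j+1}(t)$ is matched by some column of $M_{n,k}(\zz)$. There is no genuine obstacle here, and the whole argument is a direct reading of Lemma~\ref{lem:simple-PQ}~(2).
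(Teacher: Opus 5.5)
Your proof is correct and uses the same argument as the paper. Column $j$ of $C$ is read off from the expansion of $P_{j+1}(t)$ in Lemma~\ref{lem:simple-PQ}~(2), which gives a lower unitriangular matrix over $S$ that does not depend on $\zz$ or $k$. Your explicit entries $C_{m,j} = D_{n-j,n-m}(x_{j+1},\dots,x_n)$ for $m > j$ are the correctly indexed ones, whereas the paper's stated entry $D_{n-j+1,n-i+1}(x_j,\dots,x_n)$ is off by one (it reads coefficients off $P_j$ rather than $P_{j+1}$); already for $n=2$ the correct entry is $C_{2,1} = -x_2^{q-1} = D_{1,0}(x_2)$, not $D_{2,1}(x_1,x_2)$.
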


\begin{proof}
    Lemma~\ref{lem:simple-PQ} (2) implies that the polynomial $P_i(t)$ expands in terms of Dickson invariants in partial variable sets according to 
    \begin{equation}
    P_i(t) = t^{q^{n-i+1}} + \sum_{r = 0}^{n-i} D_{n-i+1,r}(x_i, x_{i+1}, \dots, x_n) \cdot t^{q^r}.
    \end{equation}
    Thanks to this expansion, for $1 \leq j < i \leq n$ we can take the $(i,j)$-entry of $C$ to be the Dickson invariant $D_{n-j+1,n-i+1}(x_j,x_{j+1},\dots,x_n)$.
\end{proof}

Given a $k$-element subset $J \subseteq [n]$, let $E_J$ be the $(n-k) \times n$ echelon matrix with 0,1-entries whose pivot columns are indexed by $[n] - J$. For example, if $n  =5$ and $J = \{1,4,5\}$ then 
\[
E_J = \begin{pmatrix}
    0 & 1 & 0 & 0 & 0 \\
    0 & 0 & 1 & 0 & 0 
\end{pmatrix}.
\]
Define a $(n-k) \times k$ matrix $H_J$ with entries in $S$ by
\begin{equation}
    H_J := E_J \cdot C^{-1}
\end{equation}
where $C$ is the $n \times n$ matrix of Lemma~\ref{lem:unitriangular}. Let $\widetilde{F}_{n,k,J}(\xx,\zz)$ be the $n \times n$ {\em augmented factor matrix} given in block form by
\begin{equation}
    \widetilde{F}_{n,k,J}(\xx,\zz) := \left(\begin{array}{c}
            F_{n,k}(\xx,\zz) \\ \hline E_J
    \end{array}\right)
\end{equation}  
Observe that 
\begin{equation}
    \widetilde{F}_{n,k,J}(\xx,\zz) \cdot C^{-1} = 
    \left(\begin{array}{c}
            F_{n,k}(\xx,\zz) \cdot C^{-1} \\ \hline E_J \cdot C^{-1} 
    \end{array}\right) = 
    \left(\begin{array}{c}
            M_{n,k}(\zz) \\ \hline H_J
    \end{array}\right).
\end{equation}
For any subset $J = \{ j_1 < \cdots < j_k \} \subseteq \{0,1,\dots,n-1\}$, define an operator
\begin{equation}
    d^{(q)}_J: \DD \otimes E \longrightarrow \DD \otimes E
\end{equation}
by the composition $d^{(q)}_J := d_{j_1}^{(q)} \circ \cdots \circ d^{(q)}_{j_k}$. We have all the tools we need to define the $\DDD$-operators.

\begin{defn}
    \label{def:D-operator}
    Let $J \subseteq [n]$ satisfy $\# J = k$. We define an operator $\DDD_J^{(q)}$ on $\DD \otimes E$ by the formula
    \begin{equation}
        \DDD_J^{(q)}(f) := \sum_{\substack{U \subseteq [n] \\ \#U = n-k}} (-1)^{\sum U} \Delta_U(H_J) \odot d^{(q)}_{U^*}(f)
    \end{equation}
    where $\sum U := \sum_{u \in U} u$, $\Delta_U(H_J)$ is the maximal minor of $H_J$ with columns indexed by $U$, and $U^* := \{n-u \,:\, u \in U \} \subseteq \{0,1,\dots,n-1\}$.
\end{defn}

Definition~\ref{def:D-operator} gives an $\FF_q$-analog of the $\DDD$-operators appearing in \cite{RW} and \cite{MRW}. As in those papers, Definition~\ref{def:D-operator} is designed to give the following triangularity result. Recall that for $1 \leq j \leq n$, the polynomial $Q_j$ is given by
\[
Q_j = \prod_{a_{j+1},\dots,a_n \in \FF_q} (x_j + a_{j+1} x_{j+1} + \cdots + a_n x_n) \in S
\]
and for $J \subseteq [n]$ we have $Q_J = \prod_{j \in J} Q_j$.

\begin{lemma}
    \label{lem:D-triangular}
    Let $J,K \subseteq [n]$ with $\# J = \# K = [n]$ and let $f \in \DD$. Consider the element \[\DDD_J^{(q)}(f) \in \DD \otimes_{\FF_q} E.\]
    The coefficient of $\theta_K$ in $\DDD_J^{(q)}(f)$ lies in $\DD$.
    \begin{enumerate}
        \item If $K \not\leq_\Gale J$, the coefficient of $\theta_K$ in $\DDD_J^{(q)}(f)$ vanishes.
        \item If $K = J$, the coefficient of $\theta_J$ in $\DDD_J^{(q)}(f)$ is $Q_J \odot f$, up to a nonzero scalar.
    \end{enumerate}
\end{lemma}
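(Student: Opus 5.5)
The plan is to turn the coefficient of $\theta_K$ in $\DDD_J^{(q)}(f)$ into a single determinant, using the Laplace expansion built into Definition~\ref{def:D-operator}. That determinant will reduce to a $k \times k$ minor of the $q$-factor matrix evaluated at $\zz = (x_{k_1},\dots,x_{k_k})$, and the vanishing $P_i(x_j)=0$ for $j \geq i$ (Lemma~\ref{lem:simple-PQ} (3)) then does all the work.

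\textbf{Step 1 (coefficients of $d^{(q)}_{U^*}(f)$).} For $f \in \DD$ of exterior degree $0$ and $U^* = \{u_1^* < \cdots\}$, expand the composition of the $d^{(q)}_{u^*}$ by definition. The result is a sum over sequences $(i_1,i_2,\dots)$ of terms $(x_{i_1}^{q^{u_1^*}} x_{i_2}^{q^{u_2^*}}\cdots) \odot f$ times a wedge of the corresponding $\theta_{i}$'s. Collecting the terms supported on a fixed exterior monomial and antisymmetrizing gives a minor: up to a global sign depending only on the sizes, the coefficient of a given $\theta_L$ is $\Delta_{L,U}(M(\xx)) \odot f$. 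Here $M(\xx)$ denotes the power matrix with entries $x_l^{q^{n-u}}$, with rows indexed by $L$ and columns by $U$; the identity $q^{u^*} = q^{n-u}$ is exactly why $U^*$ appears in the definition.

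\textbf{Step 2 (Laplace expansion).} Summing over $U$ with the signs $(-1)^{\sum U}$ and the complementary minors $\Delta_U(H_J)$ is a Laplace expansion along a complementary set of columns. The coefficient of $\theta_K$ should therefore be, up to a nonzero scalar, $\det\bigl(\begin{smallmatrix} M_{n,k}(x_{k_1},\dots,x_{k_k}) \\ H_J\end{smallmatrix}\bigr) \odot f$. Making this precise requires matching the index sets of Definition~\ref{def:D-operator} (sizes $k$ versus $n-k$) with the exterior monomial $\theta_K$ in the statement, and checking that the resulting sign depends only on $J$ and $K$, not on $U$. I expect this sign and indexing bookkeeping to be the main obstacle; I would first verify it in the case $n=2$, $k=1$ before doing the general case.

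By the displayed block identity after Lemma~\ref{lem:unitriangular}, the stacked matrix equals $\widetilde{F}_{n,k,J}(\xx,\zz)\cdot C^{-1}$ with $\zz=(x_{k_1},\dots,x_{k_k})$. Since $C$ is lower unitriangular, $\det C^{-1}=1$, so the determinant equals $\det \widetilde{F}_{n,k,J}(\xx,\zz)$. Now expand this determinant along the $E_J$ rows. They are $0,1$ rows with pivots in the columns $[n]-J$, so up to sign the determinant is the $k\times k$ minor $N := \bigl(P_{j+1}(x_{k_a})\bigr)_{a \in [k],\, j \in J}$.

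\textbf{Step 3 (triangularity).} By Lemma~\ref{lem:simple-PQ} (3), $P_{j+1}(x_m)=0$ whenever $m \geq j+1$. For part (1), suppose $K \not\leq_\Gale J$, and pick $r$ with $k_r > j_r$. Then for every $a \geq r$ and $b \leq r$ we have $k_a \geq k_r > j_r \geq j_b$, so $N_{a,b}=0$. This gives a zero block of size $(k-r+1)\times r$, and since $(k-r+1)+r > k$ it forces $\det N = 0$. For part (2), take $K=J$. The same vanishing makes $N$ triangular, with diagonal entries $P_{j+1}(x_j) = Q_j$ by Definition~\ref{def:P-Q}. Hence $\det N = \pm Q_J$, and the coefficient of $\theta_J$ is $Q_J \odot f$ up to a nonzero scalar. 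Finally, the coefficient always lies in $\DD$, because each $d^{(q)}_{u}$ only applies elements of $S$ under $\odot$ and wedges with $\theta$'s, and $f$ has exterior degree $0$.
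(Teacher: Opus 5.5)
Your proposal is correct and follows the paper's proof essentially step for step. Like the paper, you use a Laplace expansion to reach $\det\bigl(\begin{smallmatrix} M_{n,k}(\xx_K) \\ H_J \end{smallmatrix}\bigr)\odot f$, strip off the unitriangular $C$, expand along the $E_J$ rows to get the $k\times k$ minor of the $q$-factor matrix on the columns $J$, and finish with $P_i(x_j)=0$ for $j\geq i$. The index mismatch you flagged is real: for the expansion to produce that determinant, $\Delta_U(H_J)$ with $\#U=n-k$ must be paired with $d^{(q)}_{([n]-U)^*}$ rather than $d^{(q)}_{U^*}$, after which $(-1)^{\sum U}$ matches the Laplace sign up to a global factor, and your explicit zero-block count in part (1) makes the paper's block-triangular remark precise.
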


\begin{proof}
Fix $K \subseteq [n]$ with $|K| = k$. We write $M_{n,k}(\xx_K)$ and $F_{n,k}(\xx,\xx_K)$ for the matrices obtained from $M_{n,k}(\zz)$ and $F_{n,k}(\xx,\zz)$ by replacing the variables in the $k$-element set $\zz = \{z_1,\dots,z_k\}$ with those in the $k$-element set $\xx_K = \{x_i \,:\, i \in K \}$ in some fixed order. The choice of this order will only produce immaterial signs.

For any $f \in S$ we compute
\begin{align}
    \text{coefficient of $\theta_K$ in } \DDD^{(q)}_J(f) &=
    \text{coefficient of $\theta_K$ in } \sum_{\substack{U \subseteq [n] \\ \#U = n-k}} (-1)^{\sum U} \Delta_U(H_J) \odot d^{(q)}_{U^*}(f) \\
    &\doteq \det \left( 
        \begin{array}{c}
            M_{n,k}(\xx_K) \\ \hline H_J
        \end{array}
    \right) \odot f \\
    &= \det \left( \begin{array}{c}
            F_{n,k}(\xx,\xx_K) \\ \hline E_J
        \end{array}
    \right) \odot f
\end{align}
where the last equality holds because the transition matrix $C$ in Lemma~\ref{lem:unitriangular} is unitriangular.  Write $\overline{F}_{n,J}(\xx,\xx_K)$ for the `reduced' factor matrix of size $k \times k$ obtained by selecting the columns in $F_{n,k}(\xx,\xx_K)$ indexed by $J \subseteq [n]$.  We have
\begin{equation}
    \text{coefficient of $\theta_K$ in } \DDD^{(q)}_J(f) \doteq 
    \det \left( \begin{array}{c}
            F_{n,k}(\xx,\xx_K) \\ \hline E_J
        \end{array}
    \right) \odot f \doteq 
    \det \overline{F}_{n,J}(\xx,\xx_K) \odot f.
\end{equation}

We examine the determinant $\det \overline{F}_{n,J}(\xx,\xx_K)$ in detail.
Since $P_i(x_j) = \left[ P_i(t) \right]_{t = x_j}$ vanishes whenever $j \geq i$ (Lemma~\ref{lem:simple-PQ} (3)), whenever $K \not\leq_\Gale J$ the determinant of $\overline{F}_{n,J}(\xx,\xx_K)$ has block form
\begin{equation}
    \det \overline{F}_{n,J}(\xx,\xx_K) = \det \begin{pmatrix}
        * & * \\ \zero & * 
    \end{pmatrix}
\end{equation}
where the southwest block of zeros meets the main diagonal. This forces $\det \overline{F}_{n,J}(\xx,\xx_K) = 0$ and (1) follows.

On the other hand, when $K = J$ the matrix $\overline{F}_{n,J}(\xx,\xx_J)$ is upper triangular with diagonal elements
\[
 \{ P_{j+1}(x_j)  \,:\, j \in J \}
\]
so that
\begin{equation}
    \det \overline{F}_{n,J}(\xx,\xx_J) = \prod_{j \in J} P_{j+1}(x_j) = Q_J
\end{equation}
which yields (2).
\end{proof}

\subsection{Main results} We have all of the necessary lemmata to give our promised formula for the Hilbert series of the $GL_n(\FF_q)$-superspace coinvariant ring.

\begin{theorem}
    \label{thm:hilbert}
    The Hilbert series of the $G = GL_n(\FF_q)$-superspace coinvariant ring is given by
    \begin{equation}
        \Hilb(SR_G;t,z) = \prod_{i=1}^{n-1} [q^n - q^i]_t \times \sum_{J \subseteq [n]} z^{|J|} \cdot  \left[ q^n - 1 - \sum_{j \in J} q^{n-j} \right]_t.
    \end{equation}
\end{theorem}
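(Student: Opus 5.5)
Lemma~\ref{lem:hilbert-upper-bound} already gives the upper bound, so it remains to prove the matching lower bound $\Hilb(SI_G^\perp;t,z) \geq$ the right-hand side, using $\Hilb(SR_G;t,z) = \Hilb(SI_G^\perp;t,z)$. The plan is to build, for each $J \subseteq [n]$, a subspace of $SI_G^\perp$ with the Hilbert series of $S/(I_G : Q_J)$ times $z^{|J|}$. We then show that these subspaces together are linearly independent, using Gale-order triangularity.

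\textbf{Construction.} Set $I_G^\perp = S \odot \delta^{(q)}$ as in Proposition~\ref{prop:steinberg}. Any $g \in I_G^\perp \subseteq \DD = \DD\otimes E_0$ is annihilated by $SI_G \cap \Omega_{*,0} \supseteq I_G$. Moreover $SI_G$ in exterior degree $0$ is exactly $I_G$, since $\Omega^G_{*,0}=S^G$ and products with positive exterior degree cannot land in degree $0$. Hence $I_G^\perp \subseteq SI_G^\perp$. By Lemma~\ref{lem:modular-euler-preservation} each $d_j^{(q)}$ preserves $SI_G^\perp$, and $SI_G^\perp$ is closed under $\odot$ by $S$. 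The operator $\DDD_J^{(q)}$ is an $S$-linear combination of compositions of the $d_j^{(q)}$, so $\DDD_J^{(q)}(I_G^\perp) \subseteq SI_G^\perp$. Define $V_J := \DDD_J^{(q)}(I_G^\perp)$.

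\textbf{Size of $V_J$.} Write $\pi_J$ for the projection onto the $\theta_J$ coefficient. By Lemma~\ref{lem:D-triangular}(2), $\pi_J \circ \DDD_J^{(q)}$ acts on $I_G^\perp$ as $g \mapsto Q_J \odot g$ up to scalar. The image $Q_J \odot I_G^\perp$ is dual to $S/(I_G:Q_J)$: the pairing $\langle h, Q_J\odot g\rangle = \langle hQ_J, g\rangle$ shows that the annihilator of $Q_J\odot I_G^\perp$ in $S$ is $(I_G:Q_J)$. Therefore, by Lemma~\ref{lem:tilde-quotient},
\[
\Hilb(Q_J \odot I_G^\perp;t) = \prod_{i=1}^{n-1}[q^n-q^i]_t \cdot \Big[q^n-1-\textstyle\sum_{j\in J} q^{n-j}\Big]_t .
\]
The bidegree shift introduced by $\DDD_J^{(q)}$ must be tracked here. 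Its $\theta_J$-coefficient lowers commutative degree by $\deg Q_J = \sum_{j\in J} q^{n-j}$ and raises exterior degree by $|J|$, which matches the grading of $S/(I_G:Q_J)$ sitting in $z^{|J|}$. Choose homogeneous $g$'s in $I_G^\perp$ whose images $Q_J\odot g$ form a basis, and let $W_J \subseteq V_J$ be the span of their $\DDD_J^{(q)}$-images. Then $W_J$ has Hilbert series $z^{|J|}$ times the displayed factor.

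\textbf{Independence.} Suppose $\sum_J w_J = 0$ with $w_J \in W_J$, and fix an exterior degree $k$. By Lemma~\ref{lem:D-triangular}(1), every element of $W_J$ has $\theta_K$-coefficient zero unless $K \leq_\Gale J$. Take $J$ Gale-maximal among those with $w_J \neq 0$ and apply $\pi_J$. Every other $w_{J'}$ with $\pi_J(w_{J'}) \neq 0$ would need $J \leq_\Gale J'$, which is excluded by maximality. Hence $\pi_J(w_J) = 0$, and since $\pi_J$ is injective on $W_J$ by construction, $w_J=0$, a contradiction. Summing over $J$ gives the lower bound, and combined with Lemma~\ref{lem:hilbert-upper-bound} this yields equality.

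The main obstacle I expect is the bidegree bookkeeping and the duality step identifying $Q_J \odot I_G^\perp$ with $(S/(I_G:Q_J))^*$ with the correct grading. That step is where the commutative-degree shift must come out to exactly $\sum_{j \in J} q^{n-j}$ so that the Hilbert series matches.
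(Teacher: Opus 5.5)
Your proposal is correct and is essentially the paper's argument: the upper bound comes from Lemma~\ref{lem:hilbert-upper-bound}, and the lower bound comes from the spaces $\DDD_J^{(q)}(I_G^\perp)$, which coincide with the paper's $S \odot \DDD_J^{(q)}(\delta^{(q)})$ because $\DDD_J^{(q)}$ commutes with the $\odot$-action of $S$, together with the Gale triangularity of Lemma~\ref{lem:D-triangular} and the identification $\ann_S(Q_J \odot I_G^\perp) = (I_G : Q_J)$. Your Gale-maximality argument for independence spells out the paper's compressed inequality, and since you use $\ann_S(I_G^\perp) = I_G$ directly you do not actually need Proposition~\ref{prop:steinberg} for this theorem; the degree bookkeeping you worry about is also automatic, because $\DDD_J^{(q)}(g)$ has the same commutative degree as its $\theta_J$-coefficient $Q_J \odot g$.
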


\begin{proof}
    Lemma~\ref{lem:hilbert-upper-bound} gives the upper bound
    \begin{equation}
        \Hilb(SR_G;t,z) \leq \prod_{i=1}^{n-1} [q^n - q^i]_t \times \sum_{J \subseteq [n]} z^{|J|} \cdot  \left[ q^n - 1 - \sum_{j \in J} q^{n-j} \right]_t.
    \end{equation}
    Since $\Hilb(SR_G;t,z) = \Hilb(SI_G^\perp;t,z)$, it suffices to prove 
    \begin{equation}
    \label{eq:desired-inverse-lower-bound}
        \Hilb(SI_G^\perp;t,z) \geq \prod_{i=1}^{n-1} [q^n - q^i]_t \times \sum_{J \subseteq [n]} z^{|J|} \cdot  \left[ q^n - 1 - \sum_{j \in J} q^{n-j} \right]_t.
    \end{equation}
    We establish the inequality \eqref{eq:desired-inverse-lower-bound} as follows.

    Let $\delta^{(q)} \in \DD$ be the divided power ring element appearing in Proposition~\ref{prop:steinberg}. Fix $J \subseteq [n]$. Proposition~\ref{prop:steinberg}, Lemma~\ref{lem:modular-euler-preservation} and the explicit formula for the operator $\DDD_J^{(q)}$ in Definition~\ref{def:D-operator} imply that
    \begin{equation}
        \DDD^{(q)}_J(\delta^{(q)}) \in SI_G^\perp.
    \end{equation}
    Lemma~\ref{lem:D-triangular} implies that 
    \begin{equation}
    \label{eq:triangular-consequence}
        \DDD^{(q)}_J(\delta^{(q)}) = (Q_J\odot \delta^{(q)}) \cdot \theta_J + \text{a $\DD$-linear combination of $\theta_K$ for $K <_\Gale J.$} 
    \end{equation}
    For any $\gamma \in \DD$, let $\ann_S(\gamma) = \{ f \in S \,:\, f \odot \gamma = 0 \}$ be the annihilator of $\gamma$ in $S$. Proposition~\ref{prop:steinberg} implies
    \begin{equation}
        \label{eq:annihilator-identification}
        \ann_S (  Q_J  \odot \delta^{(q)} ) = 
        (\ann_S(\delta^{(q)}) : Q_J)
        = 
        \left(
            I_G: Q_J
        \right)
    \end{equation}
    as ideals in $S$. Since $SI_G^\perp$ is closed under the $\odot$-action of $S$ and the triangularity property \eqref{eq:triangular-consequence} holds for each $J \subseteq [n]$, we have 
    \begin{align}
        \Hilb(SI_G^\perp;t,z) &\geq \Hilb \left(
        \sum_{J \subseteq [n]} S \odot \DDD^{(q)}_J(\delta^{(q)});t,z
        \right)\\
        &\geq \sum_{J \subseteq [n]} z^{|J|} \cdot \Hilb \left(
            S \odot \left( Q_J \odot \delta^{(q)} \right); t
        \right) \\
        &= \sum_{J \subseteq [n]} z^{|J|} \cdot \Hilb\left(
            \frac{S}{\ann_S\left( Q_J \odot \delta^{(q)}  \right)}; t \right) \\
        &= \sum_{J \subseteq [n]} z^{|J|}
            \cdot \Hilb \left( S/(I_G:Q_J); t \right) \\
        &= \sum_{J \subseteq [n]} z^{|J|} \cdot \prod_{i=1}^{n-1} [q^n - q^i]_t \times \left[ q^n - 1 - \sum_{j \in J} q^{n-j} \right]_t
    \end{align}
    where the last equality follows from Lemma~\ref{lem:tilde-quotient}. This establishes the desired lower bound \eqref{eq:desired-inverse-lower-bound} on $\Hilb(SI_G^\perp;t,z)$ and the proof is complete. 
\end{proof}

The proof of Theorem~\ref{thm:hilbert} also establishes an operator-theoretic characterization of the inverse system $SI_G^\perp$.

\begin{theorem}
    \label{thm:operator}
    The inverse system $SI_G^\perp \subseteq \DD \otimes_{\FF_q} E$ associated to $G = GL_n(\FF_q)$ is the smallest linear subspace of $\DD \otimes E$ which $\dots$
    \begin{itemize}
        \item contains the element $\delta^{(q)}$,
        \item is closed under the $\odot$-action of $S$, and
        \item is closed under the $\FF_q$-higher Euler operators $d_j^{(q)}$ for $0 \leq j \leq n-1$.
    \end{itemize}
\end{theorem}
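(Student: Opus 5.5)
Let $\mathcal{W} \subseteq \DD \otimes E$ be the smallest subspace satisfying the three bullet conditions. The proof will show $\mathcal{W} \subseteq SI_G^\perp$ and then that $\mathcal{W}$ is large enough to equal $SI_G^\perp$; for the second part we reuse the lower-bound computation from the proof of Theorem~\ref{thm:hilbert}.

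\emph{Containment $\mathcal{W} \subseteq SI_G^\perp$.} We check that $SI_G^\perp$ satisfies the three conditions; minimality of $\mathcal{W}$ then gives the inclusion.
\begin{itemize}
\item By Proposition~\ref{prop:steinberg}, $\delta^{(q)} \in I_G^\perp$. Since $\delta^{(q)}$ has exterior degree $0$, it is killed by every element of $SI_G$. Indeed, a bihomogeneous element of $SI_G$ of exterior degree $0$ lies in $(S^G_+)\cdot 1 = I_G$, and elements of positive exterior degree pair to zero with exterior degree $0$ for degree reasons. Hence $\delta^{(q)} \in SI_G^\perp$.
\item $SI_G^\perp$ is closed under the $\odot$-action of $S \subseteq \Omega$, because $SI_G$ is an ideal.
\item $SI_G^\perp$ is closed under each $d_j^{(q)}$ by Lemma~\ref{lem:modular-euler-preservation}.
\end{itemize}

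\emph{Reverse inequality.} Here we use that, by Definition~\ref{def:D-operator}, each $\DDD_J^{(q)}(\delta^{(q)})$ is a sum of terms $\Delta_U(H_J) \odot d^{(q)}_{U^*}(\delta^{(q)})$, where $U^* \subseteq \{0,1,\dots,n-1\}$ and $\Delta_U(H_J) \in S$. Each such term is obtained from $\delta^{(q)}$ by applying operators $d_j^{(q)}$ with $0 \le j \le n-1$ followed by the $\odot$-action of $S$, so it lies in $\mathcal{W}$. Hence $\DDD_J^{(q)}(\delta^{(q)}) \in \mathcal{W}$ for every $J \subseteq [n]$, and therefore
\[
\sum_{J \subseteq [n]} S \odot \DDD^{(q)}_J(\delta^{(q)}) \subseteq \mathcal{W}.
\]
The proof of Theorem~\ref{thm:hilbert} shows that the Hilbert series of the left-hand side is bounded below, coefficientwise, by the right-hand side of Theorem~\ref{thm:hilbert}, which equals $\Hilb(SI_G^\perp;t,z)$. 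That argument uses the Gale triangularity of Lemma~\ref{lem:D-triangular}, the annihilator identification \eqref{eq:annihilator-identification}, and Lemma~\ref{lem:tilde-quotient}.

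\emph{Conclusion.} We now have $\mathcal{W} \subseteq SI_G^\perp$ together with
\[
\Hilb(\mathcal{W};t,z) \ge \Hilb(SI_G^\perp;t,z).
\]
Both spaces are bigraded with finite-dimensional pieces, so they are equal in each bidegree, and hence $\mathcal{W} = SI_G^\perp$.

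Two points need care. First, the triangularity argument in the proof of Theorem~\ref{thm:hilbert} gives a lower bound on the span of the $S$-orbits of the $\DDD_J^{(q)}(\delta^{(q)})$, not only on the individual leading coefficients. This holds because Gale-triangularity makes the leading-coefficient map injective on a filtration whose associated graded pieces are the $S \odot (Q_J \odot \delta^{(q)})$. Second, $\mathcal{W}$ must be bigraded. It is, since it is spanned by images of the homogeneous element $\delta^{(q)}$ under bihomogeneous operators.
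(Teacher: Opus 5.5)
Your proposal is correct and follows the paper's proof essentially step for step. Like the paper, you sandwich $\sum_{J\subseteq[n]} S\odot\DDD_J^{(q)}(\delta^{(q)}) \subseteq \mathcal{W} \subseteq SI_G^\perp$ and close the gap with the lower bound from the proof of Theorem~\ref{thm:hilbert}. Your additional checks are sound and fill in steps the paper leaves implicit: that $\delta^{(q)}\in SI_G^\perp$ because the exterior-degree-zero part of $SI_G$ is $I_G$, and the Gale-filtration justification of the triangularity bound.
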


\begin{proof}
    Let $SH_G \subseteq \DD \otimes_{\FF_q} E$ be the linear subspace described in the theorem. By Proposition~\ref{prop:steinberg} and Lemma~\ref{lem:modular-euler-preservation} we have
    \begin{equation}
        SH_G \subseteq SI_G^\perp.
    \end{equation}
    Definition~\ref{def:D-operator} makes it apparent that 
    \begin{equation}
        \sum_{J \subseteq [n]} S \odot \DDD^{(q)}_J (\delta^{(q)}) \subseteq SH_G
    \end{equation}
    as bigraded subspaces of $\DD \otimes_{\FF_q} E$.
    On the other hand, we have
    \begin{align}
        \dim_{\FF_q} \left( \sum_{J \subseteq [n]} S \odot \DDD^{(q)}_J (\delta^{(q)}) \right) &\geq \prod_{i=1}^{n-1}(q^n - q^i) \times \sum_{J \subseteq [n]} \left(q^n - 1 - \sum_{j \in J} q^{n-j}\right) \\
        &= \dim_{\FF_q} SI_G^\perp
    \end{align}
    where the equality follows from Theorem~\ref{thm:hilbert} and the inequality follows from its proof. The nested vector spaces
    \[
    \sum_{J \subseteq [n]} S \odot \DDD_J^{(q)}(\delta^{(q)}) \subseteq SH_G \subseteq SI_G^\perp
    \]
    therefore satisfy
    \[
    \sum_{J = [n]} S \odot \DDD_J^{(q)}(\delta^{(q)}) \subseteq SH_G = SI_G^\perp
    \]
    and the proof is complete.
\end{proof}

\section{Groups Between $SL_n(\FF_q)$ and $GL_n(\FF_q)$}
\label{sec:Between}

In this section we generalize Theorems~\ref{thm:hilbert} and \ref{thm:operator} to subgroups $G$ of $GL_n(\FF_q)$ which contain $SL_n(\FF_q)$. The method of proof is quite similar; we indicate where to make the relevant adjustments. The replacement for Lemma~\ref{lem:syzygy} reads as follows.

\begin{lemma}
    \label{lem:syzygy-between}
    Let $G$ be a subgroup of $GL_n(\FF_q)$ containing $SL_n(\FF_q)$ and let $e = |\{\det(g)\,:\, g \in G \} |$.
    For any $1 \leq i \leq n$, there exists a bihomogeneous element $f_i^G \in SI_G$ belonging to the superspace coinvariant ideal attached to $G$ such that the following statements are true.
    \begin{itemize}
        \item $f_i^G$ has fermionic degree 1 and bosonic degree $e \cdot (q^{n-1} + \cdots + q + 1) - q^{n-i}.$
        \item The coefficient of $\theta_i$ in $f_i^G$ is 
        \[
        \frac{(\det M_\rho)^e}{Q_i}.
        \]
        \item The coefficient of $\theta_j$ in $f_i^G$ is zero for $i' < i$.
    \end{itemize}
\end{lemma}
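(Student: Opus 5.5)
The natural candidate is to rescale the element $f_i$ of Lemma~\ref{lem:syzygy} by a power of $\det M_\rho$. Set
\[
f_i^G := \frac{f_i}{(\det M_\rho)^{q-1-e}} = \sum_{j=1}^i s_{i-j}^{(q)}(x_i,\dots,x_n)\cdot \frac{d(D_{n,n-j})}{(\det M_\rho)^{q-1-e}}.
\]
The proof then has three steps.

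First, membership in $SI_G$. By Theorem~\ref{thm:HS}, each quotient $d(D_{n,n-j})/(\det M_\rho)^{q-1-e}$ is a $G$-invariant element of $\Omega$, and the divisibility is established in \cite{HS}. It has exterior degree $1$, so it lies in $\Omega^G_+$. The coefficients $s^{(q)}_{i-j}(x_i,\dots,x_n)$ are merely polynomials in $S$, but this does not matter: $SI_G$ is an ideal, so any $S$-linear combination of elements of $\Omega^G_+$ lies in $SI_G$. Hence $f_i^G\in SI_G$.

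Second, the coefficients of $\theta_{i'}$ for $i'\le i$. Division by the polynomial $\det M_\rho$ acts coefficientwise on $\theta$'s. By Lemma~\ref{lem:syzygy}, the coefficient of $\theta_{i'}$ vanishes for $i'<i$. The coefficient of $\theta_i$ is, up to a nonzero scalar,
\[
\frac{D_{n,0}}{Q_i\,(\det M_\rho)^{q-1-e}}.
\]
Since $D_{n,0}\doteq(\det M_\rho)^{q-1}$, this is $(\det M_\rho)^e/Q_i$ up to a scalar. Rescaling $f_i^G$ by that nonzero scalar makes the coefficient exactly $(\det M_\rho)^e/Q_i$. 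This is a polynomial because, by Lemma~\ref{lem:simple-PQ} (1) and taking $(q-1)$-th roots up to scalar, $\det M_\rho\doteq \prod_k Q_k$. One can also see this directly: both sides have degree $1+q+\cdots+q^{n-1}$, and $\prod_k Q_k$ divides $\det M_\rho$ because $\det M_\rho$ vanishes on every nonzero linear form, being alternating in the $\FF_q$-span. Consequently $(\det M_\rho)^e/Q_i\doteq Q_i^{e-1}\prod_{k\ne i}Q_k^e$.

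Third, the degree. The coefficient of $\theta_i$ has degree
\[
e(1+q+\cdots+q^{n-1})-\deg Q_i = e(q^{n-1}+\cdots+1)-q^{n-i},
\]
using $\deg Q_i=q^{n-i}$. Since $f_i^G$ is bihomogeneous, with every summand of the same degree as in $f_i$ minus $(q-1-e)(1+\cdots+q^{n-1})$, this gives the bosonic degree. Its fermionic degree is $1$.

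The only delicate point is bookkeeping: the exact identification $\det M_\rho\doteq\prod_k Q_k$ and the exponent arithmetic. The divisibility needed for membership is imported wholesale from Hartmann--Shepler, so I expect no real obstacle beyond confirming that the scalar ambiguity $\doteq$ can be absorbed by rescaling.
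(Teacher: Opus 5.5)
Your proposal is correct and is essentially the paper's proof. The paper likewise divides the element $f_i$ of Lemma~\ref{lem:syzygy} by $(\det M_\rho)^{q-1-e}$, invokes Theorem~\ref{thm:HS} for membership in $SI_G$, and uses $\det M_\rho \doteq Q_1 Q_2 \cdots Q_n$ to identify the $\theta_i$-coefficient; you add details the paper leaves implicit, namely the justification of that identity, the rescaling that removes the scalar, and the degree arithmetic.
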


\begin{proof}
    The Moore matrix determinant $\det M_\rho$ for $\rho = (n-1,\dots,1,0)$ satisfies
    \begin{equation}
        \label{eq:moore-into-Q}
        \det M_\rho \doteq Q_1 Q_2 \cdots Q_n.
    \end{equation}
     By Theorem~\ref{thm:HS} we may divide the element $f_i \in SI_{GL_n(\FF_q)}$ appearing in Lemma~\ref{lem:syzygy} by $(\det M_\rho)^{q-1-e}$ to obtain an element $f_i^G \in SI_G$ of the required form.
\end{proof}

Just as in Lemma~\ref{lem:hilbert-upper-bound}, we can use Lemma~\ref{lem:syzygy-between} to obtain the following upper bound on the Hilbert series of $SR_G$:
\begin{equation}
    \label{eq:between-hilbert-upper}
    \Hilb(SR_G;t,z) \leq \prod_{i=1}^{n-1} [q^n - q^i]_t \times \sum_{J \subseteq [n]} z^{|J|} \cdot \left[ e \cdot (q^{n-1} + \cdots + q + 1) - \sum_{j\in J} q^{n-j} \right]_t.
\end{equation}
The inequality \eqref{eq:between-hilbert-upper} is proven in exactly the same way as Lemma~\ref{lem:hilbert-upper-bound}, but with the $f_i$ of Lemma~\ref{lem:syzygy} replaced by the $f_i^G$ of Lemma~\ref{lem:syzygy-between}. 

The next step is to analyze the commutative inverse system $I_G^\perp \subseteq \DD$ for $G$ containing $SL_n(\FF_q)$. Define an element $\delta_G^{(q)} \in \DD$ by
\begin{align}
    \delta_G^{(q)} &:= (\det M_{\rho})^{q-1-e} \odot  \left[\sum_{w \in \symm_n} \sign(w) \cdot y_{w(1)}^{(q^n-1-1)} y_{w(2)}^{(q^n - q-1)} \cdots y_{w(n)}^{(q^n-q^{n-1}-1)} \right] \\
    &= (\det M_\rho)^{q-1-e} \odot \delta^{(q)}
\end{align}
where $e := | \{\det(g) \,:\, g \in G \}|$ and $\delta^{(q)} \in \DD$ is as in Proposition~\ref{prop:steinberg}. In the case $G = GL_n(\FF_q)$ we have $\delta_{GL_n(\FF_q)}^{(q)} = \delta^{(q)}$. The $G$-analog of 
Proposition~\ref{prop:steinberg} is as follows.

\begin{lemma}
    \label{lem:modular-steinberg-between}
    Let $G$ be a subgroup of $GL_n(\FF_q)$ containing $SL_n(\FF_q)$.
    The divided power ring element $\delta_G^{(q)} \in \DD$ generates $I_G^\perp$ under the $\odot$-action.
\end{lemma}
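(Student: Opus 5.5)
The plan is to reduce to Proposition~\ref{prop:steinberg} with a colon-ideal argument, exactly as in the proof of Theorem~\ref{thm:hilbert}. Write $\Delta := \det M_\rho$, so that $\Delta \doteq Q_1 \cdots Q_n$ by \eqref{eq:moore-into-Q} and $D_{n,0} \doteq \Delta^{q-1}$. By Theorem~\ref{thm:between-polynomial-generators},
\[
I_G = (\Delta^e, D_{n,1}, \dots, D_{n,n-1}).
\]
Since $\delta_G^{(q)} = \Delta^{q-1-e} \odot \delta^{(q)}$, Proposition~\ref{prop:steinberg} gives
\[
\ann_S(\delta_G^{(q)}) = (\ann_S(\delta^{(q)}) : \Delta^{q-1-e}) = (I_{GL_n(\FF_q)} : \Delta^{q-1-e}).
\]
Moreover, $S \odot \delta_G^{(q)} = (\ann_S(\delta_G^{(q)}))^\perp$, since a cyclic submodule of $\DD$ is the inverse system of its annihilator (perfect pairing degree by degree). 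It therefore suffices to prove the equality of ideals
\[
(I_{GL_n(\FF_q)} : \Delta^{q-1-e}) = I_G .
\]

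For this equality I would invoke Lemma~\ref{lem:colon-regular}. The Dickson invariants $D_{n,0}, D_{n,1}, \dots, D_{n,n-1}$ form a regular sequence: they are algebraically independent generators of $S^G$, so $S/I_{GL_n(\FF_q)}$ is finite-dimensional, and one can equally appeal to Lemma~\ref{lem:regular-locus}. Rescaling $D_{n,0}$ by a nonzero scalar, factor it as $f_1' \cdot f_1''$ with $f_1' = \Delta^{q-1-e}$ and $f_1'' = \Delta^e$. Lemma~\ref{lem:colon-regular} then yields
\[
(I_{GL_n(\FF_q)} : \Delta^{q-1-e}) = (\Delta^e, D_{n,1}, \dots, D_{n,n-1}) = I_G,
\]
as desired. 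The degenerate case $e = q-1$ is just Proposition~\ref{prop:steinberg}, and it is covered by the scalar case of Lemma~\ref{lem:colon-regular}.

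If one wants a direct check of the containment $\delta_G^{(q)} \in I_G^\perp$, note that $f \odot (\Delta^{q-1-e}\odot\delta^{(q)}) = (f\Delta^{q-1-e}) \odot \delta^{(q)}$. This vanishes for $f = \Delta^e$ because $\Delta^{q-1} \doteq D_{n,0}$, and for $f = D_{n,i}$ with $i \geq 1$ because $D_{n,i} \odot \delta^{(q)} = 0$. The spanning statement then follows by comparing dimensions: by Lemma~\ref{lem:regular-hilbert},
\[
\dim S/I_G = e\,(q^{n-1}+\cdots+1)\prod_{i=1}^{n-1}(q^n-q^i).
\]

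I expect the main obstacle to be bookkeeping rather than any new idea. The two things to get right are the identification of $\ann_S$ of an $S$-translate as a colon ideal, which uses that $\odot$ is a module action, and the verification that $\Delta^e$ together with $D_{n,1},\dots,D_{n,n-1}$ really generate $I_G$. The latter is exactly Theorem~\ref{thm:between-polynomial-generators}.
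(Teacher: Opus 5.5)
Your proposal is correct and follows essentially the paper's proof. Both arguments identify $\ann_S(\delta_G^{(q)})$ with the colon ideal $(I_{GL_n(\FF_q)} : (\det M_\rho)^{q-1-e})$ via Proposition~\ref{prop:steinberg}, and then apply Lemma~\ref{lem:colon-regular} to the factorization $D_{n,0} \doteq (\det M_\rho)^{q-1-e}\cdot(\det M_\rho)^{e}$ to obtain $I_G$. The only cosmetic difference is in the conclusion: the paper checks the containment $\delta_G^{(q)} \in I_G^\perp$ and then compares dimensions, whereas you conclude directly from $\ann_S(\delta_G^{(q)}) = I_G$ by Macaulay duality.
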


\begin{proof}
    Since $D_{n,0} \doteq (\det M_\delta)^{q-1}$, it follows from the definition of $\delta_G^{(q)}$ and Proposition~\ref{prop:steinberg} that $\delta^{(q)}_G \in SI_G^\perp$. We are therefore reduced to proving
    \begin{equation}
        \label{eq:desired-between-inverse-dimension-polynomial}
        \dim_{\FF_q}  S \odot \delta^{(q)}_G = \dim_{\FF_q} I_G^\perp = \dim_{\FF_q} R_G.
    \end{equation}
    By Proposition~\ref{prop:steinberg} and the definition of $\delta_G^{(q)}$ we have 
    \begin{equation}
        \label{eq:known-between-inverse-dimension-polynomial}
        \dim_{\FF_q}  S \odot \delta^{(q)}_G = \dim_{\FF_q} S/(I_{GL_n(\FF_q)} : (\det M_\rho)^{q-1-e}).
    \end{equation}
    Since $D_{n,0} \doteq (\det M_\rho)^{q-1}$ and $I_{GL_n(\FF_q)} = (D_{n,0}, D_{n,1}, \dots, D_{n,n-1})$, an application of Lemma~\ref{lem:colon-regular} gives 
    \begin{equation}
        (I_{GL_n(\FF_q)} : (\det M_\rho)^{q-1-e}) = ((\det M_\rho)^e, D_{n,1}, \dots, D_{n,n-1}) = I_G
    \end{equation}
    so that the dimensions \eqref{eq:desired-between-inverse-dimension-polynomial}  and \eqref{eq:known-between-inverse-dimension-polynomial} coincide and the proof is complete.
\end{proof}

Since Lemma~\ref{lem:modular-euler-preservation} holds for any subgroup $G$ containing $SL_n(\FF_q)$\footnote{actually for any subgroup $G$ of $GL_n(\FF_q)$}, we may apply the operators $\DDD^{(q)}_J$ of Definition~\ref{def:D-operator} to $\delta^{(q)}_G$ to obtain elements of $SI_G^\perp$. This allows us to turn the upper bound \eqref{eq:between-hilbert-upper} on $\Hilb(SR_G;t,z)$ into an equality.

\begin{theorem}
    \label{thm:hilbert-between}
    Let $G$ be a subgroup of $GL_n(\FF_q)$ containing $SL_n(\FF_q)$ and let $e := | \{ \det(g) \,:\, g \in G \} |$.
    We have
    \begin{equation}
        \Hilb(SR_G;t,z) = \prod_{i=1}^{n-1} [q^n - q^i]_t \times \sum_{J \subseteq [n]} z^{|J|} \cdot  \left[ e \cdot (q^{n-1} + \cdots + q + 1) - \sum_{j \in J} q^{n-j} \right]_t.
    \end{equation}
\end{theorem}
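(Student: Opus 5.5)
The plan mirrors the proof of Theorem~\ref{thm:hilbert}. The upper bound \eqref{eq:between-hilbert-upper} is already in hand from Lemma~\ref{lem:syzygy-between}, so it suffices to bound $\Hilb(SI_G^\perp;t,z)$ from below by the same expression. Fix $J \subseteq [n]$. By Lemma~\ref{lem:modular-steinberg-between} we have $\delta_G^{(q)} \in I_G^\perp$. Every $d_j^{(q)}$ preserves $SI_G^\perp$ by Lemma~\ref{lem:modular-euler-preservation}, and $SI_G^\perp$ is closed under the $\odot$-action of $S$. Together with Definition~\ref{def:D-operator}, these facts give $\DDD_J^{(q)}(\delta_G^{(q)}) \in SI_G^\perp$. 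Lemma~\ref{lem:D-triangular} then shows that $\DDD_J^{(q)}(\delta_G^{(q)})$ equals $(Q_J \odot \delta_G^{(q)})\cdot \theta_J$ plus a $\DD$-combination of $\theta_K$ with $K <_\Gale J$.

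The triangularity argument from the proof of Theorem~\ref{thm:hilbert} applies verbatim. It yields
\[
\Hilb(SI_G^\perp;t,z) \geq \sum_{J} z^{|J|}\, \Hilb\bigl(S/\ann_S(Q_J \odot \delta_G^{(q)});t\bigr).
\]
Lemma~\ref{lem:modular-steinberg-between} gives $\ann_S(\delta_G^{(q)}) = I_G$, and therefore $\ann_S(Q_J\odot \delta_G^{(q)}) = (I_G : Q_J)$. It remains to compute the Hilbert series of $S/(I_G:Q_J)$.

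For this computation, write $I_G = ((\det M_\rho)^e, D_{n,1},\dots,D_{n,n-1})$ using Theorem~\ref{thm:between-polynomial-generators}. This is a regular sequence, since its quotient is finite-dimensional: it is the colon ideal computed in the proof of Lemma~\ref{lem:modular-steinberg-between}, or alternatively one can use Lemma~\ref{lem:regular-locus}, because $(\det M_\rho)^e$ and $D_{n,0}$ have the same zero locus. By \eqref{eq:moore-into-Q}, $(\det M_\rho)^e \doteq \prod_i Q_i^e$. For $e \geq 1$, $Q_J$ divides this, giving the factorization $(\det M_\rho)^e \doteq Q_J \cdot \bigl((\det M_\rho)^e/Q_J\bigr)$. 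Lemma~\ref{lem:colon-regular} then gives
\[
(I_G:Q_J) = ((\det M_\rho)^e/Q_J, D_{n,1},\dots,D_{n,n-1}).
\]
Lemma~\ref{lem:regular-hilbert} now computes the Hilbert series of the quotient as $\prod_{i=1}^{n-1}[q^n-q^i]_t \cdot [\deg((\det M_\rho)^e/Q_J)]_t$.

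The main point to check is the degree count: $\deg \det M_\rho = \sum_i q^{n-i} = q^{n-1}+\cdots+1$, while $\deg Q_j = q^{n-j}$. The remaining factor therefore has degree $e(q^{n-1}+\cdots+1) - \sum_{j\in J} q^{n-j}$. Summing over $J$ matches \eqref{eq:between-hilbert-upper}, and combining the upper and lower bounds completes the proof.
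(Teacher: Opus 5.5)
Your proposal is correct and follows the paper's proof essentially step for step. It takes the upper bound \eqref{eq:between-hilbert-upper} from Lemma~\ref{lem:syzygy-between} and gets the matching lower bound from the Gale-triangular elements $\DDD^{(q)}_J(\delta^{(q)}_G) \in SI_G^\perp$ with $\ann_S(Q_J \odot \delta^{(q)}_G) = (I_G : Q_J)$; your explicit check of regularity and of the degree $e(q^{n-1}+\cdots+1)-\sum_{j\in J}q^{n-j}$ via Lemma~\ref{lem:colon-regular} applied to $I_G = ((\det M_\rho)^e, D_{n,1},\dots,D_{n,n-1})$ spells out what the paper calls an easy generalization of Lemma~\ref{lem:tilde-quotient}.
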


\begin{proof}
    Thanks to Equation~\eqref{eq:between-hilbert-upper} and the equality $\Hilb(SR_G;t,z) = \Hilb(SI^\perp_G;t,z)$ we need only show the lower bound
    \begin{equation}
    \label{eq:desired-between-inverse-lower-bound}
        \Hilb(SI_G^\perp;t,z) \geq \prod_{i=1}^{n-1} [q^n - q^i]_t \times \sum_{J \subseteq [n]} z^{|J|} \cdot  \left[ e \cdot (q^{n-1} + \cdots + q + 1) - \sum_{j \in J} q^{n-j} \right]_t.
    \end{equation}
    As explained in the paragraph before the statement of the theorem, for any $J \subseteq [n]$ we have an element
    \begin{equation}
        \DDD^{(q)}_J(\delta^{(q)}_G) \in SI_G^\perp.
    \end{equation}
    Lemma~\ref{lem:D-triangular} says that $\DDD^{(q)}_J(\delta^{(q)}_G)$ satisfies 
    \begin{equation}
    \label{eq:triangular-consequence-between}
        \DDD^{(q)}_J(\delta^{(q)}_G) = (Q_J \odot \delta^{(q)}_G) \cdot \theta_J + \text{a $\DD$-linear combination of $\theta_K$ for $K <_\Gale J.$} 
    \end{equation}
    Lemma~\ref{lem:modular-steinberg-between} implies 
    \begin{equation}
    \label{eq:annihilator-identification-between}
        \ann_S \left(
             Q_J \odot \delta^{(q)}_G 
        \right)  =
        \left(
                \ann_S(\delta_G^{(q)}): Q_J
        \right) = 
        \left(
                I_G :  Q_J
        \right)
    \end{equation}
    as ideals in $S$. Since $I_G = ( (\det M_\rho)^e, D_{n,1}, \dots, D_{n,n-1})$ where $\det M_\rho = Q_1 Q_2 \cdots Q_n$, the proof Lemma~\ref{lem:tilde-quotient} generalizes easily to show
    \begin{equation}
    \label{eq:colon-hilbert-between}
        \Hilb(S/(I_G:Q_J);t)
        = \prod_{i=1}^{n-1} [q^n - q^i]_t \times \left[ e \cdot (q^{n-1} + \cdots + q + 1) - \sum_{j \in J} q^{n-j}  \right]_t.
    \end{equation}
    Combining \eqref{eq:annihilator-identification-between} and \eqref{eq:colon-hilbert-between} with the triangularity statement \eqref{eq:triangular-consequence-between}, we get
    \begin{align}
        \Hilb(SI_G^\perp;t,z) &\geq  \Hilb \left(  \sum_{J \subseteq [n]} S \odot \DDD_J^{(q)}(\delta_G^{(q)});t,z \right) \\
        &\geq \sum_{J \subseteq [n]} z^{|J|} \cdot \Hilb\left(
            \frac{S}{\ann_S \left(
         Q_J \odot \delta^{(q)}_G 
        \right)};t
        \right) \\
        &= \sum_{J \subseteq [n]} z^{|J|} \cdot \Hilb\left(
            S/(I_G:Q_J);t
        \right) \\
        &= \prod_{i=1}^{n-1} [q^n - q^i]_t \times \sum_{J \subseteq [n]} z^{|J|} \cdot  \left[ e \cdot (q^{n-1} + \cdots + q + 1) - \sum_{j \in J} q^{n-j} \right]_t.
    \end{align}
    This establishes the desired lower bound \eqref{eq:desired-between-inverse-lower-bound} and the proof is complete.
\end{proof}

We close this section with an operator-theoretic characterization of the inverse system $SI_G^\perp$ for $G$ containing $SL_n(\FF_q)$. This result is exactly the same as the $GL_n(\FF_q)$-case of Theorem~\ref{thm:operator}, except that the inverse system generator $\delta^{(q)}$ is replaced by $\delta^{(q)}_G$.

\begin{theorem}
    \label{thm:operator-between}
    Let $G$ be a subgroup of $GL_n(\FF_q)$ containing $SL_n(\FF_q)$ and let $e := | \{ \det(g) \,:\, g \in G \} |$.
    The inverse system $SI_G^\perp \subseteq \DD \otimes E$ is the smallest linear subspace of $\DD \otimes E$ which $\dots$
    \begin{itemize}
        \item contains the element $\delta^{(q)}_G$,
        \item is closed under the $\odot$-action of $S$, and
        \item is closed under the $\FF_q$-higher Euler operators $d_i^{(q)}$ for $0 \leq i \leq n-1$.
    \end{itemize}
\end{theorem}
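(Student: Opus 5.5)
The plan is to copy the proof of Theorem~\ref{thm:operator} verbatim, with $\delta^{(q)}$ replaced by $\delta^{(q)}_G$ and the $GL_n(\FF_q)$ Hilbert series replaced by the one in Theorem~\ref{thm:hilbert-between}. Let $SH_G \subseteq \DD \otimes E$ denote the smallest linear subspace containing $\delta^{(q)}_G$ that is closed under the $\odot$-action of $S$ and under the operators $d_i^{(q)}$ for $0 \leq i \leq n-1$. The argument is a sandwich
\[
\sum_{J \subseteq [n]} S \odot \DDD^{(q)}_J(\delta^{(q)}_G) \subseteq SH_G \subseteq SI_G^\perp,
\]
followed by a dimension count.

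First I establish $SH_G \subseteq SI_G^\perp$. Lemma~\ref{lem:modular-steinberg-between} gives $\delta^{(q)}_G \in I_G^\perp$. Since $I_G = SI_G \cap S$ in exterior degree $0$, and every element of $SI_G$ has its exterior-degree-$0$ component in $I_G$, we get $\delta^{(q)}_G \in SI_G^\perp$. The space $SI_G^\perp$ is closed under $\odot$ by $\Omega \supseteq S$. It is closed under each $d_i^{(q)}$ by Lemma~\ref{lem:modular-euler-preservation}, which is valid for any subgroup of $GL_n(\FF_q)$. Minimality of $SH_G$ then gives the containment.

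Second, I establish the left containment. Definition~\ref{def:D-operator} writes $\DDD^{(q)}_J$ as a sum of terms $\pm \Delta_U(H_J) \odot d^{(q)}_{U^*}(-)$. Here $H_J$ has entries in $S$, so each $\Delta_U(H_J) \in S$, and $U^* \subseteq \{0,1,\dots,n-1\}$. Each term therefore lies in $SH_G$ once it is applied to $\delta^{(q)}_G$. Since $SH_G$ is $S$-stable, the whole sum $\sum_J S \odot \DDD^{(q)}_J(\delta^{(q)}_G)$ is contained in $SH_G$.

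Finally, I compare dimensions. The lower-bound chain in the proof of Theorem~\ref{thm:hilbert-between} gives
\[
\dim_{\FF_q} \sum_J S \odot \DDD^{(q)}_J(\delta^{(q)}_G) \geq \prod_{i=1}^{n-1}(q^n-q^i) \cdot \sum_J \Bigl(e(q^{n-1}+\cdots+1) - \sum_{j\in J} q^{n-j}\Bigr).
\]
That chain uses the Gale-triangularity of Lemma~\ref{lem:D-triangular} together with the annihilator identification $\ann_S(Q_J \odot \delta^{(q)}_G) = (I_G : Q_J)$. By Theorem~\ref{thm:hilbert-between} the right-hand side equals $\dim_{\FF_q} SR_G = \dim_{\FF_q} SI_G^\perp$. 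All three nested spaces are finite-dimensional, so the inequality forces both containments to be equalities, and $SH_G = SI_G^\perp$.

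The only step needing care is the dimension inequality. It must be extracted from the proof of Theorem~\ref{thm:hilbert-between} rather than from its statement, because what is needed is a lower bound on the specific subspace $\sum_J S \odot \DDD^{(q)}_J(\delta^{(q)}_G)$, not just on $SI_G^\perp$. That bound comes from choosing, for each $J$, elements of $S \odot \DDD^{(q)}_J(\delta^{(q)}_G)$ whose Gale-leading $\theta_J$-coefficients are linearly independent. These independent coefficients can be taken to span $S \odot (Q_J \odot \delta^{(q)}_G)$, and triangularity then makes the chosen elements independent across different $J$.
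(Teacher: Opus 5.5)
Your proposal is correct and is essentially the paper's proof: the same sandwich $\sum_{J \subseteq [n]} S \odot \DDD^{(q)}_J(\delta^{(q)}_G) \subseteq SH_G \subseteq SI_G^\perp$, closed off by the dimension lower bound taken from the proof of Theorem~\ref{thm:hilbert-between}, and your added details (why $\delta^{(q)}_G \in SI_G^\perp$, and the Gale-triangularity independence argument) fill in steps the paper leaves implicit. You also state the common dimension correctly as $\prod_{i=1}^{n-1}(q^n-q^i)\sum_{J}\bigl(e(q^{n-1}+\cdots+1)-\sum_{j\in J}q^{n-j}\bigr)$; the paper's parenthetical claim that it equals $|G|$ is a slip, since $|G|$ is only the $J=\emptyset$ summand.
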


\begin{proof} 
    Let $SH_G \subseteq \DD \otimes E$ be the subspace in question. As in the proof of Theorem~\ref{thm:operator}, one has 
    \begin{equation}
        \sum_{J \subseteq [n]} S \odot \DDD^{(q)}_J(\delta^{(q)}_G) \subseteq SH_G \subseteq SI_G^\perp.
    \end{equation}
    The proof of Theorem~\ref{thm:hilbert-between} shows that the vector space dimensions of $\sum_{J \subseteq [n]} S \odot \DDD^{(q)}_J(\delta^{(q)}_G)$ and $SI_G^\perp$ coincide (and are equal to $|G|$) so that these subspace containments are actually equalities. 
\end{proof}

\section{Conclusion}
\label{sec:Conculsion}

Theorem~\ref{thm:hilbert-between} gives the Hilbert series of $SR_G$ for all subgroups $G$ of $GL_n(\FF_q)$ containing $SL_n(\FF_q)$. It is natural to ask for an explicit basis of this vector space.

\begin{problem}
    \label{prob:basis}
    Let $G$ be a subgroup of $GL_n(\FF_q)$ containing $SL_n(\FF_q)$. Find an explicit vector space basis of $SR_G$.
\end{problem}

In light of Theorem~\ref{thm:hilbert-between} and Steinberg's Corollary~\ref{cor:steinberg-basis}, a natural guess for a monomial basis of $SR_G$ is
\begin{equation}
    \AAA_G := \bigsqcup_{J \subseteq [n]} \AAA_{G,J} \cdot \theta_J
\end{equation}
where for $J \subseteq [n]$,
\begin{equation}
    \AAA_{G,J} = \left\{
        x_1^{a_1} \cdots x_n^{a_n} \,:\, \begin{array}{c}
            a_i < q^n - q^{n-i} \text{ for } i = 1,2,\dots,n-1 \text{ and } \\
            a_n < e \cdot (q^{n-1} + \cdots + q + 1) - \sum_{j \in J} q^{n-j} 
        \end{array}
    \right\}
\end{equation}
with $e = |\{ \det(g) \,:\, g \in G \}|$. It may be interesting to see if the methods used in \cite{ACKMR, BR} to find monomial bases for characteristic 0 superspace coinvariant rings for the complex reflection groups $G(r,1,n)$ can be applied to show that $\BBB_G$ descends to a basis of $SR_G$. A potentially helpful `supercommutative to commutative transfer principle' in this direction is as follows.

\begin{proposition}
    \label{prop:basis-transfer}
    Let $G$ be a subgroup of $GL_n(\FF_q)$ containing $SL_n(\FF_q)$. For any subset $J \subseteq [n]$, suppose we have a set $\BBB_{G,J} \subseteq S$ of homogeneous polynomials which descends to a vector space basis of $S/(I_G:Q_J)$. Then $\bigsqcup_{J \subseteq [n]} \BBB_{G,J} \cdot \theta_J$ descends to a vector space basis of $SR_G$. 
\end{proposition}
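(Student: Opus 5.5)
Since Theorem~\ref{thm:hilbert-between} shows that $\dim_{\FF_q} SR_G = \sum_{J} \dim_{\FF_q} S/(I_G:Q_J)$, the set $\bigsqcup_J \BBB_{G,J}\cdot\theta_J$ has exactly $\dim_{\FF_q} SR_G$ elements. It is also bihomogeneous, and its bigraded count matches $\Hilb(SR_G;t,z)$ by \eqref{eq:colon-hilbert-between}. So I would prove either spanning or linear independence, and then conclude by dimension counting. Of the two, spanning is the easier route. It comes from the straightening argument in the proof of Lemma~\ref{lem:hilbert-upper-bound}, adapted to $G$ via Lemma~\ref{lem:syzygy-between}.

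First I would record the ideal equality $(I_G:Q_J) = (\widetilde{D}^G_{n,J}, D_{n,1},\dots,D_{n,n-1})$, where $\widetilde{D}^G_{n,J} := (\det M_\rho)^e/Q_J$. This follows from Lemma~\ref{lem:colon-regular} applied to the regular sequence $(\det M_\rho)^e, D_{n,1},\dots,D_{n,n-1}$ (Theorem~\ref{thm:between-polynomial-generators} together with Lemma~\ref{lem:regular-locus}), using $\det M_\rho \doteq Q_1\cdots Q_n$ and the factorization $(\det M_\rho)^e = Q_J\cdot \widetilde{D}^G_{n,J}$.

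Next, for $J=\{j_1<\dots<j_k\}$, I would set $f^G_J := f^G_{j_1}\curlywedge\cdots\curlywedge f^G_{j_k}$, using the twisted wedge product for $G$. By Theorem~\ref{thm:HS} this lies in $SI_G$. By Lemma~\ref{lem:syzygy-between} its $\theta_K$-coefficients vanish unless $J\leq_\Gale K$. Its $\theta_J$-coefficient is, up to a nonzero scalar,
\[
\prod_{j\in J}\frac{(\det M_\rho)^e}{Q_j}\Big/(\det M_\rho)^{e(k-1)} = \widetilde{D}^G_{n,J}.
\]
The elements $D_{n,i}\theta_J$ also lie in $SI_G$. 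Hence for any $g\in S$ we can reduce $g\cdot\theta_J$ modulo $SI_G$: first write $g$ modulo $(I_G:Q_J)$ as a combination of elements of $\BBB_{G,J}$, which leaves an error term supported on $\theta_K$ with $J<_\Gale K$. Inducting downward on the Gale order within each exterior degree (the maximal subset $\{n-k+1,\dots,n\}$ is the base case), every $g\theta_K$ is congruent modulo $SI_G$ to a combination of elements of $\BBB_{G,K'}\theta_{K'}$. Therefore the proposed set spans $SR_G$.

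The dimension count then gives a basis. The main subtlety is the homogeneity bookkeeping: Theorem~\ref{thm:hilbert-between} gives the total dimension, and spanning by a set of that cardinality already forces independence, so finite-dimensionality handles this step. The step I would check most carefully is the claim that $f^G_i$ actually lies in $S\cdot\Omega^G_{*,1}$, so that the twisted wedge products stay inside $SI_G$. That claim is where Theorem~\ref{thm:HS} and the divisibility by $(\det M_\rho)^{q-1-e}$ enter, exactly as in \eqref{eq:fJ-membership}.
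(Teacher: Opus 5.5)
Your proposal is correct and is essentially the paper's own (sketched) argument. You prove spanning by the Gale-order straightening of Lemma~\ref{lem:hilbert-upper-bound}, with the $f_i^G$ of Lemma~\ref{lem:syzygy-between} in place of the $f_i$, and then conclude with the dimension count $\dim_{\FF_q} SR_G = \sum_J \dim_{\FF_q} S/(I_G:Q_J) = \sum_J |\BBB_{G,J}|$ from the proof of Theorem~\ref{thm:hilbert-between}; you also supply details the paper leaves implicit, namely the identification $(I_G:Q_J) = ((\det M_\rho)^e/Q_J, D_{n,1},\dots,D_{n,n-1})$ via Lemma~\ref{lem:colon-regular} and the computation of the $\theta_J$-coefficient of $f^G_J$.
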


The proof of Proposition~\ref{prop:basis-transfer} is similar to that of the corresponding basis transfer result \cite[Thm. 5.4]{RW} in the characteristic 0 case of $\symm_n$ and is only sketched.

\begin{proof} (Sketch)
    An argument similar to the proof of Lemma~\ref{lem:hilbert-upper-bound} (but using Lemma~\ref{lem:syzygy-between} instead of Lemma~\ref{lem:syzygy}) shows that $\bigsqcup_{J \subseteq [n]} \BBB_{G,J} \cdot \theta_J$ descends to a spanning set of $SR_G$. The proof of Theorem~\ref{thm:hilbert-between} implies
    \begin{equation}
        \dim_{\FF_q} SR_G = \dim_{\FF_q} SI_G^\perp = \sum_{J \subseteq [n]} \dim_{\FF_q} S/(I_G:Q_J) = \sum_{J \subseteq [n]} |\BBB_{G,J}|
    \end{equation}
    so that the spanning set $\bigsqcup_{J \subseteq [n]} \BBB_{J,G} \cdot \theta_J$ of $SR_G$ is also a basis.
\end{proof}

Since $\dim_{\FF_q} S/(I_G:Q_J) = |\AAA_{G,J}|$ by the proof of Theorem~\ref{thm:hilbert-between}, in order to prove that $\AAA_G$ descends to a vector space basis of $SR_G$ it would suffice to show that $\AAA_{G,J}$ spans or is linearly independent in $S/(I_G:Q_J)$ for all $J \subseteq [n]$.

Another open problem concerns the module structure of $SR_G$. Since we are in modular characteristic, we state this problem in terms of Brauer-isomorphisms rather than module isomorphisms.

\begin{problem}
    \label{prob:module-structure}
    Let $G$ be a subgroup of $GL_n(\FF_q)$ containing $SL_n(\FF_q)$.
    Determine the Brauer-isomorphism class of $SR_G$ as $\dots$
    \begin{enumerate}
        \item an ungraded $G$-module,
        \item a singly-graded $G$-module with respect to exterior degree, and
        \item a bigraded $G$-module.
    \end{enumerate}
\end{problem}

As mentioned in the introduction, Mitchell proved \cite{Mitchell} that  the exterior degree 0 part of $SR_G$ (i.e. the polynomial $G$-coinvariant ring $R_G$) is Brauer-isomorphic to the regular representation $\FF_q[G]$. The graded Brauer-isomorphism type of $R_G$ is unknown in full generality, but Wan--Wang determined \cite{WW} many of its graded composition factors.

We do not have a conjectural solution to Problem~\ref{prob:module-structure} (2) in general, but in the case $G = SL_n(\FF_q)$ the dimension data of Theorem~\ref{thm:hilbert-between} is consistent with the conjecture that 
\begin{equation}
    \label{eq:sl-guess}
    \text{the exterior degree $j$ part of $SR_{SL_n(\FF_q)}$} \text{ is Brauer-isomorphic to } \FF_q[SL_n(\FF_q)]^{\oplus {n-1 \choose j}}.
\end{equation}

As discussed earlier, the Brauer-isomorphism \eqref{eq:sl-guess} holds when $j = 0$.
Let $\chi_j$ be the Brauer-character of the exterior degree $j$ part of $SR_{SL_n(\FF_q)}$ and suppose $q = p^a$ for $p$ prime. Thanks to Theorem~\ref{thm:hilbert-between}, the conjecture \eqref{eq:sl-guess} is equivalent to the assertion that $\chi_j$ vanishes on every nonidentity $p$-regular element $g \in SL_n(\FF_q)$. This character vanishing has been checked for small values of $n, q, j$, but the rapid growth of the character degrees involved has resulted in quite limited data.

In characteristic 0, the analog of Problem~\ref{prob:module-structure} has been solved for Weyl groups $W$ of type ABC \cite{BR, MRW}. In this case, the exterior degree $j$ piece of $SR_G$ is the permutation representation of $W$ on codimension $j$ faces of the Coxeter complex, after a twist by the sign character. This is in contrast with the simpler structure in \eqref{eq:sl-guess} given by copies of the regular representation. Groups strictly containing $SL_n(\FF_q)$ may exhibit more interesting module structure.

\section*{Acknowledgements}

The authors are grateful to Vic Reiner for helpful conversations and, in particular, asking B. Rhoades about the structure of the superspace $GL_n(\FF_q)$-coinvariant ring after a talk at the University of Minnesota in February 2026. B. Rhoades was partially supported by NSF Grant DMS-2246846. A. Wilson was partially supported by AMS-Simons PUI Grant 434651.
The authors used ChatGPT extensively to write code which helped them guess the statements of Lemma~\ref{lem:syzygy} and Proposition~\ref{prop:steinberg}. The proofs of Lemma~\ref{lem:syzygy} and Proposition~\ref{prop:steinberg} were assisted in part by long conversations with ChatGPT. We have also used ChatGPT to assist in revising this paper. The authors wrote this entire manuscript and take full responsibility for its contents.

\maketitle

\end{document}